\newcommand{\dataversione}
{September 29, 2017 (revised)}
\numberwithin{equation}{section}
\newtheoremstyle{mytheorem}
{10pt}
{10pt}
{\it}
{}
{\bf}
{.}
{ }
{\thmnumber{#2.~}\thmname{#1}\thmnote{~\rm#3}}
\newtheoremstyle{myremark}
{10pt}
{10pt}
{\rm}
{}
{\bf}
{.}
{ }
{\thmnumber{#2.~}\thmname{#1}\thmnote{~\rm#3}}
\newtheoremstyle{myparagraph}
{10pt}
{10pt}
{\rm}
{}
{\bf}
{.}
{ }
{\thmnumber{#2.~}\thmname{#1}\thmnote{#3}}
\theoremstyle{mytheorem}
\newtheorem{theorem}[subsection]{Theorem}
\newtheorem{lemma}[subsection]{Lemma}
\newtheorem{corollary}[subsection]{Corollary}
\newtheorem{proposition}[subsection]{Proposition}
\theoremstyle{myremark}
\newtheorem{remark}[subsection]{Remark}
\newtheorem*{remark*}{Remark}
\theoremstyle{myparagraph}
\newtheorem{parag}[subsection]{}
\newtheorem*{parag*}{}
\def\@secnumfont{\sc}
\def\section{\@startsection%
{section}
{1}
\z@{1.5\linespacing\@plus .2\linespacing}
  {.7\linespacing}
  {\normalfont\sc\centering}}
\renewenvironment{proof}[1][\proofname]{\par 
  \pushQED{\qed}%
  \normalfont \topsep10\p@\@plus6\p@\relax 
  \trivlist 
  \item[\hskip\labelsep 
    \bfseries 
    #1\@addpunct{.}]\ignorespaces 
}{%
  \popQED\endtrivlist\@endpefalse 
} 
\providecommand{\proofname}{Proof}
\newcommand{\footnoteb}[1]{\footnote{~#1}}
\newenvironment{itemizeb}
{\begin{itemize}\itemsep=2pt\leftskip -5 pt}
{\end{itemize}}
\newcommand{\R}{\mathbb{R}}
\newcommand{\F}{\mathscr{F}}
\newcommand{\Haus}{\mathscr{H}}
\newcommand{\Leb}{\mathscr{L}}
\newcommand{\eps}{\varepsilon}
\newcommand{\bd}{\partial}
\newcommand{\Golab}{Go{\l}\k{a}b}
\newcommand{\Len}{\mathrm{length}}
\newcommand{\diam}{\mathrm{diam}}
\newcommand{\dist}{\mathrm{dist}}
\newcommand{\Lip}{\mathrm{Lip}}
\newcommand{\sgn}{\mathrm{sgn}}
\begin{document}


\thispagestyle{empty}
~\vskip -1.1 cm

	%
	%
{\footnotesize\noindent 
version:~\dataversione%
\,\raisebox{4pt}{$\dagger$}%
\let\thefootnote\relax\footnotetext{\raisebox{3pt}{$\dagger$}~%
Compared to the published version, we have added two pictures, 
corrected the proof of Lemma~\ref{s-lemmachiave1} and a few small 
mistakes, and modified the definition of $\delta$-partition in 
Subsection~\ref{s-deltapartitions}, in order to make the statement 
of Proposition~\ref{s-lunghezza2} stronger
(consequently, we have also modified Lemma~\ref{s-deltapartitions2}).}%
\hfill 
\emph{Nonlinear Analysis} 153 (2017), 35-55\par
\hfill DOI:~\href{http://dx.doi.org/10.1016/j.na.2016.10.012}%
{10.1016/j.na.2016.10.012} \par
}

\vspace{1.5 cm}

	%
	%
{\centering\Large\bf
On the structure of continua with finite length\par
\smallskip
and \Golab's semicontinuity theorem
\par
}

\vspace{.6 cm}

	%
	%
\centerline{\sc Giovanni Alberti and Martino Ottolini}

\vspace{.6 cm}

	%
	%
{\centering\small\sl 
Dedicated to Nicola Fusco on the occasion of his 60th birthday\\
}

\vspace{.8 cm}

	%
	%
{\rightskip 1 cm
\leftskip 1 cm
\parindent 0 pt
\footnotesize
{\sc Abstract.}
The main results in this note concern the characterization of the length 
of continua%
\,\raisebox{3pt}{\tiny1}\let\thefootnote\relax%
\footnote{\raisebox{3pt}{\scriptsize1}~
As usual, a \emph{continuum} is a connected compact metric 
space (or subset of a metric space), and the \emph{length} of a set 
is its one-dimensional Hausdorff measure $\Haus^1$.}  
(Theorems~\ref{s-lunghezza})
and the parametrization of continua with finite length (Theorem~\ref{s-canopara}).
Using these results we give two independent and relatively elementary proofs 
of \Golab's semicontinuity theorem.
\par
\medskip\noindent
{\sc Keywords:} 
continua with finite length, 
Hausdorff measure, 
\Golab's semicontinuity theorem.
\par
\medskip\noindent
{\sc MSC (2010):} 
28A75, 54F50, 26A45, 49J45, 49Q20.
\par
}

\section{Introduction}
Let $\F$ be the class of all continua $K$ contained in $\R^d$, 
endowed with the Hausdorff distance. 
A classical result due to S.~{\Golab} (see \cite{Go}, Section~3, 
or \cite{Fa}, Theorem~3.18)
states that the length, that is, the function  $K\mapsto\Haus^1(K)$, 
is lower semicontinuous on $\F$.
Variants of this semicontinuity result, together with well-known 
compactness properties of $\F$, play a key role in the proofs of 
several existence results in the Calculus of Variations, 
from optimal networks \cite{PS} to image segmentation \cite{DMMS} 
and quasi-static evolution of fractures \cite{DMT}.
In particular, \Golab's theorem has been extended to general metric spaces
in \cite{AT}, Theorem~4.4.17, and \cite{PS}, Theorem~3.3.%
\footnoteb{The proof in \cite{AT} is actually incomplete; 
the missing steps were given in \cite{PS}.}

\medskip
It should be noted that none of the proofs of \Golab's theorem
mentioned above is completely elementary. 
On the other hand, the counterpart of this result for paths, 
namely that the length of a path $\gamma:[0,1]\to X$ is lower 
semicontinuous with respect to the pointwise convergence of paths, 
is elementary and almost trivial.
This sharp contrast is due to the fact that the definitions of length 
of a path and of one-dimensional Hausdorff measure of a set are 
utterly different, even though they aim to describe (essentially) 
the same geometric quantity. 
More precisely, the length of a path, being defined as a supremum 
of finite sums which are clearly continuous, is naturally lower 
semicontinuous, while the definition of Hausdorff measure is based 
on Caratheodory's construction, and is designed to achieve 
$\sigma$-subadditivity, not semicontinuity.

\medskip
In this note we point out a couple of relations/similarities 
between the one-dimensional Hausdorff measure of continua and the 
length of paths, which we then use to give two independent (and 
relatively elementary) proofs of \Golab's theorem.
We think, however, that these results are interesting in their 
own right.

Firstly, in Theorem~\ref{s-lunghezza} we show that for 
every continuum $X$ there holds
\[
\Haus^1(X) = \sup \bigg\{ \sum_i \diam(E_i) \bigg\} 
\, , 
\]
where the supremum is taken over all finite families $\{E_i\}$
of disjoint \emph{connected} subsets of $X$. 
(Note the resemblance with the definition of length of a path.)

Secondly, in Theorem~\ref{s-canopara} we show that every continuum $X$ with 
finite length admits some sort of canonical parametrization; 
more precisely, there exists a path $\gamma:I\to X$ 
with length equal $2\,\Haus^1(X)$ which ``goes through  
almost every point of $X$ twice, once moving in a direction, 
and once moving in the opposite direction'',
the precise statement requires some technical definitions 
and is postponed to Section~\ref{s4}. 

\medskip
This paper is organized as follows: 
Sections~\ref{s2} and \ref{s4} 
contain the two results mentioned above
(Theorems~\ref{s-lunghezza} and~\ref{s-canopara}) 
and the corresponding proofs of \Golab's theorem.
Section~\ref{s3} contains a review of some basic facts about paths 
with finite length in a metric space which are used in Section~\ref{s4}, 
and can be skipped by the expert reader. 
This review is self-contained and limited in scope; 
a more detailed presentation of the theory of paths 
with finite length in metric spaces can be found in \cite{AT}, 
Chapter~4,  
while continua with finite length have been studied in detail in~\cite{EH} 
(see also \cite{Fre}).

Since the results described in this paper are rather 
elementary (in particular Theorem~\ref{s-lunghezza}), 
we strove to keep the exposition self-contained, 
and avoid in particular the use of advanced results 
from Geometric Measure Theory.
On the other hand, proofs are sometimes
just sketched, with all steps clearly indicated
but many details left to the reader.

\section{A characterization of length}
\label{s2}
The main results in this section are the characterizations of the length 
of sets with countably many connected components (and in particular 
of continua) given in Theorem~\ref{s-lunghezza} and Proposition~\ref{s-lunghezza2}.
Using the former result we give our first proof of \Golab's theorem
(Theorem~\ref{s-golab}).

\begin{parag}[Notation]
\label{s-metric}
Through this paper $X$ is a metric space endowed 
with the distance $d$.
Given $x\in X$ and $E, E'$ subsets of $X$ we set:
\begin{itemizeb}\leftskip .7 cm\labelsep=.2 cm
\item[$B(x,r)$]
\emph{closed} ball with center $x$ and radius $r>0$;
\item[$\diam(E)$]
diameter of $E$, i.e., 
$\sup\{ d(x,x') \, : \, x,x'\in E\}$;
\item[$\dist(x,E)$]
distance between $x$ and $E$, i.e.,
$\inf \{ d(x,x') \, : \, x'\in E\}$;
\item[$\dist(E,E')$]
distance between $E$ and $E'$, 
i.e., $\inf \{ d(x,x') \, : \, x\in E, \, x'\in E'\}$;%
\footnoteb{Since the infimum of the empty set is $+\infty$,
$\dist(E,E')=+\infty$ if either $E$ or $E'$ are empty.\label{footnote3}}

%
%
\item[$d_H(E,E')$]  
Hausdorff distance between $E$ and $E'$, i.e.,
the infimum of all $r \ge 0$ s.t.\ $\dist(x,E')\le r$ for every $x\in E$ 
and $\dist(x',E)\le r$ for every $x'\in E'$;
\item[$\Lip(f)$]
Lipschitz constant of a map $f$ between metric spaces;
\item[$|E|$]
$=\Leb^1(E)$, Lebesgue measure of a Borel set $E$ contained in $\R$.
\end{itemizeb}
\end{parag}

\begin{parag}[Hausdorff measure]
\label{s-haus}
For every set $E$ contained in $X$, the one-dimensional 
Hausdorff measure of $E$ is defined by
\[
\Haus^1(E) 
:= \sup_{\delta>0} \Haus^1_\delta(E) 
=\lim_{\delta\to 0} \Haus^1_\delta(E) 
\, , 
\]
where, for every $\delta \in (0,+\infty]$, 
\[
\Haus^1_\delta(E) 
:= \inf \bigg\{ \sum_i \diam(E_i) \bigg\}
\, , 
\]
the infimum being taken over all countable families 
$\{E_i\}$ of subsets of $X$ which cover $E$ and satisfy
$\diam(E_i)\le\delta$.
\end{parag}

\begin{remark}
\label{s-remhaus}
Among the many properties of $\Haus^1$ we recall the following ones.

\smallskip
(i)~$\Haus^1$ is a $\sigma$-subadditive set function 
(that is, an outer measure on $X$) and is $\sigma$-additive
on Borel sets. Moreover $\Haus^1$ agrees with the 
(outer) Lebesgue measure $\Leb^1$ when $X=\R$.

\smallskip
(ii)~Given a Lipschitz map $f:X\to Y$, 
for every set $E$ contained in $X$ there holds 
$\Haus^1(f(E)) \le \Lip(f) \, \Haus^1(E)$.

\smallskip
(iii)~If $\Haus^1_\delta(E)=0$ for some $\delta\in(0,+\infty]$ then 
$\Haus^1(E)=0$.
\end{remark}

\begin{parag}[The set function $\boldsymbol{L_\delta}$]
\label{s-conmeas}
For every $\delta\in (0,+\infty]$ and every set $E$ in $X$ we define
\[
L_\delta(E) := \sup \bigg\{ \sum_i \diam(E_i) \bigg\} 
\, ,   
\]
where the supremum is taken over all finite, disjoint families 
$\{E_i\}$ of \emph{continua} contained in $E$ with $\diam(E_i)\le\delta$. 
\end{parag}

\begin{theorem}
\label{s-lunghezza}
Let $E$ be a subset of $X$ which is locally compact 
and has countably many connected components.%
\footnoteb{A connected component of $E$ 
is any element of the class of nonempty connected subsets
of $E$ which is maximal with respect to inclusion; 
the connected components are closed in $E$, disjoint, 
and cover $E$ (for more details see \cite{Eng}, Chapter~6).} 
Then, for every $\delta \in (0,+\infty]$,
\begin{equation}
\label{e-lunghezza}
\Haus^1(E) = L_\delta(E)
\, .
\end{equation}
\end{theorem}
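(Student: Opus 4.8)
The plan is to prove the two inequalities $L_\delta(E)\le\Haus^1(E)$ and $\Haus^1(E)\le L_\delta(E)$ separately; the first is elementary and holds for arbitrary $E$, while the second is the substance of the theorem. For the easy inequality the basic observation is that $\diam(C)\le\Haus^1(C)$ for every connected $C\subseteq X$: fixing $x_0,y\in C$ with $d(x_0,y)$ arbitrarily close to $\diam(C)$, the function $u:=d(\cdot,x_0)$ is $1$-Lipschitz, so $u(C)$ is a connected subset of $[0,+\infty)$, i.e.\ an interval, and it contains $0=u(x_0)$ and $d(x_0,y)=u(y)$; hence $\Haus^1(C)\ge\Haus^1(u(C))=\Leb^1(u(C))\ge d(x_0,y)$ by Remark~\ref{s-remhaus}(i)--(ii), and letting $d(x_0,y)\to\diam(C)$ gives $\diam(C)\le\Haus^1(C)$. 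Now if $\{E_i\}$ is any finite disjoint family of continua in $E$, the $E_i$ are compact, hence Borel and pairwise disjoint, so by additivity of $\Haus^1$ on Borel sets $\sum_i\diam(E_i)\le\sum_i\Haus^1(E_i)=\Haus^1(\bigcup_iE_i)\le\Haus^1(E)$; taking the supremum yields $L_\delta(E)\le\Haus^1(E)$ for every $\delta$.

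For the reverse inequality I would first reduce to a single continuum. Since $L_\delta$ is nondecreasing in $\delta$, it suffices to treat $\delta$ small. Writing $\{C_n\}$ for the (countably many) connected components of $E$, $\sigma$-additivity gives $\Haus^1(E)=\sum_n\Haus^1(C_n)$, while continua lying in distinct components are automatically disjoint, so finitely many admissible families, one in each of $C_1,\dots,C_N$, may be superposed; this yields the superadditivity $L_\delta(E)\ge\sum_nL_\delta(C_n)$. Each $C_n$ is connected and locally compact but possibly noncompact; using local compactness I would exhaust it by an increasing sequence of subcontinua $K_m$ with $\Haus^1(K_m)\uparrow\Haus^1(C_n)$ (continuity from below of the measure), and since $L_\delta(C_n)\ge L_\delta(K_m)$ this reduces everything, including the case $\Haus^1(E)=+\infty$, to proving $\Haus^1(X)\le L_\delta(X)$ for a continuum $X$ with $\Haus^1(X)<+\infty$.

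The core is thus a subdivision/covering statement: given $\eps>0$ and $\delta>0$, produce finitely many disjoint subcontinua $E_i\subseteq X$ with $\diam(E_i)\le\delta$ and $\sum_i\diam(E_i)\ge\Haus^1(X)-\eps$. The mechanism I would exploit is that at small scales the diameter of a subcontinuum recaptures its length: for a rectifiable arc with parametrization $\gamma$, chopping at a fine partition $t_0<\dots<t_k$ and slightly shrinking each subarc to make the pieces disjoint, one has $\sum_i\diam(E_i)$ close to $\sum_id(\gamma(t_{i-1}),\gamma(t_i))$, which tends to the length of $\gamma$ as the mesh goes to $0$. To apply this on all of $X$ I would invoke (or prove inline, as the intended ``key lemma'') that a continuum of finite length is $1$-rectifiable and rectifiably arcwise connected, so that up to an $\Haus^1$-null set $X$ is covered by countably many essentially disjoint rectifiable arcs; keeping finitely many of them so as to capture measure $\ge\Haus^1(X)-\eps/2$, and subdividing each into small disjoint subarcs as above, delivers the required family.

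The main obstacle is precisely this core covering lemma: passing from the lower bound encoded in $\Haus^1$ to the existence of many disjoint small subcontinua whose diameters add up to the full measure. This is where connectedness and finiteness of the length must be combined; without connectedness the statement fails (a totally disconnected set of positive length has $L_\delta=0$, which is exactly why the hypothesis of countably many components is essential), and it amounts to establishing the rectifiable arc structure of finite-length continua together with the convergence of diameter-sums of fine subdivisions to length. I expect the delicate points to be making the retained arcs, and hence the subpieces, genuinely disjoint while sacrificing only $\eps$ of measure, and bounding the diameters of the subarcs by $\delta$ uniformly.
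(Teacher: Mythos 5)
Your first half is fine and matches the paper's Lemmas~\ref{s-diam} and \ref{s-easyineq} (note you only need $\sigma$-\emph{sub}additivity of $\Haus^1$ there, which sidesteps any measurability discussion for the components). The decisive gap is in your reduction and in the core of the second half. First, the claim that a connected, locally compact set can be exhausted by subcontinua $K_m$ with $\Haus^1(K_m)\uparrow\Haus^1(C_n)$ is asserted, not proved, and your further claim that this reduces everything ``including the case $\Haus^1(E)=+\infty$'' to continua of \emph{finite} length is false: take $X$ to be the von Koch curve, a continuum with $\Haus^1(X)=+\infty$ in which every nondegenerate subcontinuum has infinite length and which contains no nonconstant rectifiable arcs whatsoever. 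For such an $X$ the theorem asserts $L_\delta(X)=+\infty$, but your machinery is vacuous: there is no exhaustion by finite-length subcontinua, and the rectifiable-arc covering you invoke covers nothing. The theorem as stated carries no finiteness hypothesis, so this is not a corner case but a genuine failure of the approach.

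Second, even in the finite-length compact case, the ``core covering lemma'' (that a finite-length continuum is covered, up to an $\Haus^1$-null set, by countably many essentially disjoint rectifiable arcs) is precisely the hardest content, and you invoke it rather than prove it. It is a classical but deep fact (Eilenberg--Harrold/Wa\.zewski); within this paper it could be extracted from Theorem~\ref{s-canopara}, whose proof is independent of Theorem~\ref{s-lunghezza}, so there is no circularity --- but that route is far heavier than needed, and the disjointification of arcs sharing junction points, which you flag, still requires work. The paper's actual mechanism is entirely different and avoids rectifiability: in Lemma~\ref{s-basiclemma} one takes a family $\{E_i\}$ that is $\eps$-maximal for $L_\delta(E)$, covers the leftover set $E'=E\setminus\bigcup_i E_i$ by Vitali balls $B_j=B(x_j,r_j)$ with $E\cap B_j$ compact and inside $E'$, and uses Corollary~\ref{s-concompball} (the connected component of the center of a ball reaches the sphere, hence has diameter $\ge r_j$) to produce new continua $F_j$ disjoint from the $E_i$; near-maximality then forces $\sum_j r_j\le\eps$, whence $\Haus^1_\delta(E)\le L_\delta(E)+10\,\eps$, and letting the mesh in $\Haus^1_{\delta'}$ tend to $0$ gives the hard inequality with no finiteness, compactness (beyond local), or rectifiability assumptions. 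If you replace your core lemma by an argument of this type, the infinite-length case and the exhaustion issue disappear simultaneously; as written, your proposal proves at best the case of a compact connected $E$ with $\Haus^1(E)<+\infty$, modulo the unproven covering lemma.
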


\begin{remark}
(i)~The assumption that $E$ has countably many connected components cannot 
be dropped. Indeed $L_\delta(E)=0$ for every totally disconnected set $E$, 
and there are examples of such sets with $\Haus^1(E)>0$, 
even compact and contained in~$\R$.

\smallskip
(ii)~Theorem~\ref{s-lunghezza}, together with Lemma~\ref{s-diam},
implies the following weaker statement: 
for any set $E$ as above, $\Haus^1(E)$ agrees with the supremum of 
$\sum_i \diam(E_i)$ over all finite disjoint families $\{E_i\}$ of
connected subsets of $E$.  
Concerning this identity, it is not clear if the assumption that 
$E$ is locally compact can be weakened or even removed.
The role of compactness in our proof is briefly 
discussed in Remark~\ref{s-remcomp}.
\end{remark}

Using Theorem~\ref{s-lunghezza} we can actually show that $\Haus^1(E)$
can be approximated by $\sum_i \diam(E_i)$ using \emph{any} partition 
of $E$ made of connected subsets $E_i$ with sufficiently small diameters. 
For a precise statement we need the following definition.

\begin{parag}[$\boldsymbol{\delta}$-Partitions]
\label{s-deltapartitions}
Let $E$ be a subset of $X$ and let $\delta\in(0,+\infty]$.
We say that a countable family $\{E_i\}$ of subsets of $E$ 
is a \emph{$\delta$-partition} of $E$ if the sets $E_i$ 
are Borel, connected, $\Haus^1$-essentially disjoint 
(i.e., $\Haus^1(E_i\cap E_j)=0$ for every $i\ne j$), 
cover all of $E$ except a subset $E'$ with $\Haus^1(E')\le 0$, 
and satisfy $\diam(E_i)\le\delta$.

If $E$ is locally compact, has finite length 
and countably many connected components, 
then Theorem~\ref{s-lunghezza}
implies that there exist $\delta$-partitions for every $\delta>0$. 
\end{parag}

\begin{proposition}
\label{s-lunghezza2}
Let $E$ be a subset of $X$. Then every $\delta$-partition $\{E_i\}$ of $E$ satisfies
\begin{equation}
\label{e-lunghezza2a}
\Haus^1(E) 
\ge \sum_i \Haus^1(E_i)
\ge \sum_i \diam(E_i)
\, .
\end{equation}
If in addition $E$ is locally compact and has countably 
many connected components, then for every $m<\Haus^1(E)$ 
there exists $\delta_0>0$ such that every $\delta$-partition
$\{E_i\}$ of $E$ with $\delta\le\delta_0$ satisfies
\begin{equation}
\label{e-lunghezza2b}
\sum_i \diam(E_i) \ge m
\, .
\end{equation}
\end{proposition}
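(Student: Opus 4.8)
The plan is to prove the two inequalities in \eqref{e-lunghezza2a} first, since they hold for an arbitrary $\delta$-partition and require no compactness, and then to establish the lower bound \eqref{e-lunghezza2b} separately using Theorem~\ref{s-lunghezza}.

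For \eqref{e-lunghezza2a}, the right-hand inequality $\Haus^1(E_i)\ge\diam(E_i)$ for each connected $E_i$ is a standard fact that I would either cite as Lemma~\ref{s-diam} (referenced in the preceding remark) or recall briefly: for a connected set, projecting onto the distance-from-a-point function $x\mapsto d(x,x_0)$ is $1$-Lipschitz and its image, by connectedness, contains the interval $[0,\diam(E_i)]$ up to endpoints, so by Remark~\ref{s-remhaus}(i)--(ii) we get $\Haus^1(E_i)\ge\Haus^1(\text{image})=\diam(E_i)$. Summing over $i$ gives the second inequality. For the first inequality I would use that the $E_i$ are Borel and $\Haus^1$-essentially disjoint, so by the $\sigma$-additivity of $\Haus^1$ on Borel sets (Remark~\ref{s-remhaus}(i)) we have $\sum_i\Haus^1(E_i)=\Haus^1(\bigcup_i E_i)\le\Haus^1(E)$, the last step because the union, together with the negligible set $E'$, covers $E$ and $\Haus^1$ is monotone and $\sigma$-subadditive.

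For \eqref{e-lunghezza2b}, fix $m<\Haus^1(E)$. By Theorem~\ref{s-lunghezza}, applied with any reference value (say $\delta=+\infty$), we have $\Haus^1(E)=L_\infty(E)=\sup\sum_j\diam(F_j)$ over finite disjoint families of continua, so there is a \emph{finite} disjoint family $\{F_j\}_{j=1}^N$ of continua in $E$ with $\sum_j\diam(F_j)>m$. Crucially these $F_j$ are compact and disjoint, hence pairwise at positive distance; let $\rho>0$ be their minimal mutual distance. The idea is that once $\delta$ is small, each set $E_i$ of the $\delta$-partition meets at most one of the $F_j$. I would set $\delta_0<\rho$, so that any $E_i$ with $\diam(E_i)\le\delta_0<\rho$ cannot intersect two different $F_j$. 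This lets me distribute the partition sets among the $F_j$ and estimate $\diam(F_j)$ from below by what the partition "sees" inside it.

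The main obstacle—and the point where care is needed—is bounding $\diam(F_j)$ below by $\sum_{i\in I_j}\diam(E_i)$, where $I_j$ indexes the partition sets meeting $F_j$. This is not literally true for an arbitrary cover, but since the $\{E_i\}$ restricted to $F_j$ form (up to the null set $E'$) an essentially disjoint connected cover of $F_j$, applying the already-proven inequality \eqref{e-lunghezza2a} to the continuum $F_j$ and the induced $\delta$-partition $\{E_i\cap F_j\}_{i\in I_j}$ gives $\diam(F_j)\le\Haus^1(F_j)$; this is the wrong direction, so instead I would argue the reverse containment: each $F_j$ is covered up to null sets by the $E_i$ it meets, so $\Haus^1(F_j)\le\sum_{i\in I_j}\Haus^1(E_i)$ by $\sigma$-subadditivity. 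Combining with $\sum_j\diam(F_j)>m$ and $\diam(F_j)\le\Haus^1(F_j)$ then yields $\sum_i\Haus^1(E_i)\ge\sum_j\Haus^1(F_j)\ge\sum_j\diam(F_j)>m$, and finally $\sum_i\diam(E_i)$ is controlled from below by passing through a refinement argument or by invoking that for small $\delta$ the $\diam$ and $\Haus^1$ of each piece are comparable—this comparability step is where I expect the technical work to concentrate, and it is presumably why the definition of $\delta$-partition was strengthened as noted in the footnote.
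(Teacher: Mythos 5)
Your proof of \eqref{e-lunghezza2a} is correct and coincides with the paper's, and your setup for \eqref{e-lunghezza2b} --- extracting via Theorem~\ref{s-lunghezza} a finite disjoint family of continua $F_j$ in $E$ with large $\sum_j\diam(F_j)$, then choosing $\delta_0$ below their minimal mutual distance so that each $E_i$ meets at most one $F_j$ --- is also exactly the paper's. But your final step has a genuine gap, which you flagged without closing. The chain $\sum_i\Haus^1(E_i)\ge\sum_j\Haus^1(F_j)\ge\sum_j\diam(F_j)>m$ bounds only the \emph{measure} sum, and the hoped-for ``comparability of $\diam$ and $\Haus^1$ for small $\delta$'' is false in the direction you need: only $\Haus^1(E_i)\ge\diam(E_i)$ holds, and the ratio $\Haus^1(E_i)/\diam(E_i)$ can be arbitrarily large even for connected sets of tiny diameter (a small spiral, or a piece of a positive-length continuum packed into a tiny ball). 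Since the entire content of \eqref{e-lunghezza2b} is that the \emph{diameter} sum is large --- passing back through $\Haus^1$ loses precisely this information --- no refinement of your chain can recover it.

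The missing idea is a projection lemma (Lemma~\ref{s-deltapartitions2} in the paper), which bounds the diameter sum directly. Fix $x_0,x_1\in F_j$ with $d(x_0,x_1)=\diam(F_j)$ and set $f(x):=d(x,x_0)$, a $1$-Lipschitz function. Each $f(E_i)$ is an interval of length at most $\diam(E_i)$ (by connectedness), $f(F_j)$ is an interval containing $[0,\diam(F_j)]$, and the sets $f(E_i)$ with $i\in I_j$, together with $f(F_j\setminus E')$ (where $E'$ is the union of all $E_i$), cover $f(F_j)$; comparing Lebesgue measures in $\R$ gives
\[
\Haus^1(F_j\setminus E')+\sum_{i\in I_j}\diam(E_i)\ \ge\ \diam(F_j)
\, .
\]
Summing over $j$ is legitimate because the index sets $I_j$ are pairwise disjoint by your choice of $\delta_0$ and the sets $F_j\setminus E'$ are disjoint subsets of $E\setminus E'$, whose measure is controlled by the definition of $\delta$-partition; this yields $\sum_i\diam(E_i)\ge\sum_j\diam(F_j)-\delta$. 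Note also that the paper selects the family with a margin, $\sum_j\diam(F_j)\ge m+\delta_1$ with $\delta_0\le\delta_1$, rather than the bare $\sum_j\diam(F_j)>m$ you chose: this slack is what absorbs the error term above and produces the clean inequality \eqref{e-lunghezza2b}, uniformly over all $\delta$-partitions with $\delta\le\delta_0$.
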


Using Theorem~\ref{s-lunghezza} we can also prove the following 
version of \Golab's theorem.

\begin{theorem}
\label{s-golab}
For every $m=1,2,\dots$, let $\F_m$ be the class of all nonempty
compact subsets of $X$ with at most $m$ connected components. 
Then the function $K\mapsto \Haus^1(K)$ is lower-semicontinuous 
on $\F_m$ endowed with the Hausdorff distance.
\end{theorem}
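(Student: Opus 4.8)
The plan is to prove the equivalent $\liminf$ formulation: if $K_n\to K$ in $\F_m$ (in the Hausdorff distance), then $\Haus^1(K)\le\liminf_n\Haus^1(K_n)$. Passing to a subsequence we may assume that $\Haus^1(K_n)$ converges to some $\ell$, and that $\ell<+\infty$ (otherwise there is nothing to prove). Given $\eps>0$, Theorem~\ref{s-lunghezza} (in its finite-disjoint-continua form) lets us fix a finite disjoint family of continua $E_1,\dots,E_N\subset K$ with $\sum_i\diam(E_i)>\Haus^1(K)-\eps$. Set $\rho:=\min_{i\ne j}\dist(E_i,E_j)>0$ (the $E_i$ are disjoint compacta), pick $r<\rho/2$, and let $U_i:=\{x:\dist(x,E_i)<r\}$; these open sets are pairwise disjoint. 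Writing $\eta_n:=d_H(K_n,K)\to0$, every point of $E_i\subset K$ lies within $\eta_n$ of $K_n$, hence (once $\eta_n<r$) within $\eta_n$ of $W_i^n:=K_n\cap U_i$. Since the $W_i^n$ are disjoint Borel sets and $\Haus^1$ is $\sigma$-additive on Borel sets (Remark~\ref{s-remhaus}(i)), we get $\Haus^1(K_n)\ge\sum_i\Haus^1(W_i^n)$.

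Next I would bound each local contribution from below by a one-dimensional projection. For each $i$ choose $a_i,b_i\in E_i$ with $d(a_i,b_i)=\diam(E_i)$ and set $f_i(x):=d(a_i,x)$, a $1$-Lipschitz map; since $f_i(E_i)$ is a connected subset of $[0,\diam(E_i)]$ containing both endpoints, $f_i(E_i)=[0,\diam(E_i)]$. By Remark~\ref{s-remhaus}(ii) and the identity $\Haus^1=\Leb^1$ on $\R$,
\[
\Haus^1(W_i^n)\ \ge\ \Haus^1\big(f_i(W_i^n)\big)\ =\ \big|\,f_i(W_i^n)\,\big|\,.
\]
Because every point of $E_i$ is $\eta_n$-close to $W_i^n$ and $f_i$ is $1$-Lipschitz, the set $f_i(W_i^n)$ is $\eta_n$-dense in $[0,\diam(E_i)]$, so each gap of $f_i(W_i^n)$ inside this interval has length at most $2\eta_n$. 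Hence, denoting by $g_i^n$ the number of such gaps, $|f_i(W_i^n)|\ge\diam(E_i)-2\eta_n\,g_i^n$, and summing over $i$ gives $\Haus^1(K_n)\ge\sum_i\diam(E_i)-2\eta_n\sum_i g_i^n$.

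The main obstacle is the control of the total gap count $\sum_i g_i^n$: a priori a single component of $K_n$ may enter and leave a neighborhood $U_i$ arbitrarily often, producing many gaps, and this is precisely the mechanism that makes the statement \emph{fail} for sets with infinitely many components (cf.\ the totally disconnected set of positive length mentioned in the remark after Theorem~\ref{s-lunghezza}). This is where both hypotheses must enter. Since $K_n$ has at most $m$ components, each $W_i^n$ is a union of at most $m$ relatively open subsets of continua; a gap of $f_i(W_i^n)$ forces the corresponding piece of $K_n$ to connect the two sides of a spherical shell $\{f_i=\text{const}\}$ through an excursion outside $U_i$. The plan is then to charge each such excursion to genuine length of $K_n$ lying outside $\bigcup_i U_i$ (using a boundary-bumping argument for continua together with Lemma~\ref{s-diam}, i.e.\ $\Haus^1\ge\diam$ on connected sets), and to use $\Haus^1(K_n)\to\ell<+\infty$ to bound the number of \emph{macroscopic} excursions uniformly in $n$, the remaining ones contributing gaps of size comparable to $\eta_n$ whose number is controlled by $m$. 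Granting the resulting estimate $\eta_n\sum_i g_i^n\to0$, letting $n\to\infty$ gives $\ell\ge\sum_i\diam(E_i)>\Haus^1(K)-\eps$, and then $\eps\to0$ yields $\ell\ge\Haus^1(K)$, which is the asserted lower semicontinuity.
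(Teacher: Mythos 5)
Your overall route is the paper's: reduce via Theorem~\ref{s-lunghezza} to proving $\liminf_n\Haus^1(K_n)\ge\sum_i\diam(E_i)$ for a finite disjoint family of continua $E_i\subset K$, separate the $E_i$ by disjoint neighborhoods $U_i$ at scale $r$, and project with the $1$-Lipschitz map $f_i(x)=d(a_i,x)$, so that the loss is the total measure of the gaps of $f_i(K_n\cap U_i)$ in $[0,\diam(E_i)]$. Everything up to the gap bound is correct: $\Haus^1(K_n)\ge\sum_i\Haus^1(W_i^n)$ by disjointness, $f_i(E_i)=[0,\diam(E_i)]$, and $\eta_n$-density of $f_i(W_i^n)$ gives gaps of length at most $2\eta_n$. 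But the proof stops being a proof exactly where you write ``Granting the resulting estimate $\eta_n\sum_i g_i^n\to0$'': that estimate is where the hypothesis of at most $m$ components does all its work, and it is precisely the content of the paper's Lemmas~\ref{s-lemmachiave0} and~\ref{s-lemmachiave1} (estimate~\eqref{e-stima1}). Asserting it with a plausibility sketch leaves the decisive counting lemma unproved --- and it is the delicate step: the authors themselves record in the acknowledgements that the original proof of Lemma~\ref{s-lemmachiave1} contained a mistake.

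Two concrete traps in your sketch. First, a gap of $f_i(W_i^n)$ need \emph{not} be spanned by ``the corresponding piece of $K_n$'' making an excursion outside $U_i$: the portions of $W_i^n$ below and above a gap may lie in \emph{different} components of $K_n$, so no single connected piece crosses the shell, and your charging argument does not apply to such gaps. The correct bookkeeping is by components rather than by gaps: if a set is a union of $N$ connected pieces, its image under $f_i$ is a union of $N$ intervals and has at most $N-1$ gaps, so what you need is a bound on the number $N_i$ of relevant components. Second, with $U_i$ open, $K_n\cap U_i$ may have \emph{infinitely} many components, and components near $\bd U_i$ but far from $E_i$ can be tiny and arbitrarily numerous; this is why the paper takes $U_i$ closed, invokes boundary bumping (Lemma~\ref{s-concompbordo}) --- a component of the compact set $K_n\cap U_i$ avoiding $\bd U_i$ is a whole component of $K_n$, hence there are at most $m$ such in total over all $i$ since the $U_i$ are disjoint --- and, crucially, counts only components meeting a $\delta$-connected set $K_n'$ transferred from $E_i$ and sitting at distance $\ge r-\delta$ from $\bd U_i$: those meeting $\bd U_i$ then have diameter $\ge r-\delta$ and so, by Lemma~\ref{s-diam}, each carries length $\ge r-\delta$ inside $U_i$. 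This yields $\sum_i g_i^n\le m+\Haus^1(K_n)/(r-\delta)$, which is uniformly bounded and gives $\eta_n\sum_i g_i^n\to0$. So your plan can be completed, but only by supplying exactly the paper's key lemma; as written, the argument has a genuine gap at its central quantitative step.
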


\begin{remark}
\label{s-remgolab}
The statement of \Golab's theorem is often restricted to the case $m=1$, 
and in the metric setting reads as follows
(cf.~\cite{AT}, Theorem~4.4.17): let be given a sequence 
of continua $K_n$, contained in a \emph{complete} 
metric space $X$, which converge in the Hausdorff distance to 
some closed set $K$; then $K$ is a continuum, and 
$\liminf\Haus^1(K_n) \ge \Haus^1(K)$.
The assumption that $X$ is complete is needed here to ensure 
that the limit $K$ is compact and connected, 
but not to prove the semicontinuity of length.
\end{remark}

\bigskip
The rest of this section is devoted to the proofs
of Theorems~\ref{s-lunghezza} and \ref{s-golab}, 
and Proposition~\ref{s-lunghezza2}.
We begin with the proof of Theorem~\ref{s-lunghezza}; 
the key estimate is contained in Lemma~\ref{s-basiclemma}.

\begin{lemma}
\label{s-diam}
Let $E$ be a connected set in $X$.
Then $\Haus^1(E) \ge \diam(E)$. 
\end{lemma}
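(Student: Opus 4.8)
The plan is to reduce the inequality to the one-dimensional case by projecting $E$ onto $\R$ through a distance function, and then to invoke the Lipschitz estimate for $\Haus^1$ recorded in Remark~\ref{s-remhaus}(ii). The underlying idea is that a $1$-Lipschitz map cannot increase $\Haus^1$, while the connectedness of $E$ forces its image under such a map to be a full interval whose length is controlled below by $\diam(E)$.

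Concretely, I would fix two points $x,y\in E$ and set $f:=d(x,\cdot):X\to\R$. This map is $1$-Lipschitz, so $\Lip(f)\le 1$, and Remark~\ref{s-remhaus}(ii) gives $\Haus^1(f(E))\le\Haus^1(E)$. Since $f$ is continuous and $E$ is connected, the image $f(E)$ is a connected subset of $\R$, hence an interval; as it contains $f(x)=0$ and $f(y)=d(x,y)$, it contains the whole segment $[0,d(x,y)]$. Using that $\Haus^1$ coincides with the Lebesgue measure on $\R$ (Remark~\ref{s-remhaus}(i)), I obtain $\Haus^1(f(E))\ge d(x,y)$, and therefore $\Haus^1(E)\ge d(x,y)$.

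Finally I would take the supremum over all $x,y\in E$: since $\diam(E)=\sup\{d(x,y):x,y\in E\}$, this yields $\Haus^1(E)\ge\diam(E)$, as claimed.

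As for difficulties, there is essentially no obstacle here. The only point requiring any care is the claim that $f(E)$ is an interval, which follows from the elementary facts that continuous images of connected sets are connected and that the connected subsets of $\R$ are exactly the intervals; everything else is a direct application of the two properties of $\Haus^1$ already recorded in Remark~\ref{s-remhaus}.
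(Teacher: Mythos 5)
Your proof is correct and follows the same route as the paper: both project $E$ to $\R$ via the $1$-Lipschitz distance function $f=d(x,\cdot)$, use Remark~\ref{s-remhaus}(ii) to bound $\Haus^1(f(E))$ by $\Haus^1(E)$, and exploit connectedness to see that $f(E)$ is an interval containing $[0,d(x,y)]$. No gaps; this matches the paper's argument essentially verbatim.
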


\begin{proof}
It suffices to prove that $\Haus^1(E) \ge d(x_0,x_1)$ for every 
$x_0,x_1\in E$. Let indeed $f:X\to\R$ be the function defined by
$f(x):=d(x,x_0)$. Then 
\[
\Haus^1(E) \ge |f(E)| =\diam(f(E)) \ge |f(x_1)-f(x_0)| = d(x_1,x_0)
\, ,
\]
where the first inequality follows from Remark~\ref{s-remhaus}(ii) 
(and $\Lip(f)=1$), while the first equality follows 
from the fact that $f(E)$ is an interval (because $E$ is connected).
\end{proof}

\begin{lemma}
\label{s-easyineq}
For every set $E$ in $X$ and $\delta>0$ there holds 
$\Haus^1(E) \ge L_\delta(E)$. 
\end{lemma}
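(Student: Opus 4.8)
The plan is to bound $\sum_i \diam(E_i)$ directly for an arbitrary admissible family and then pass to the supremum. First I would fix a finite, disjoint family $\{E_i\}$ of continua contained in $E$ with $\diam(E_i)\le\delta$; by the definition of $L_\delta$ in Subsection~\ref{s-conmeas}, it suffices to show that $\sum_i\diam(E_i)\le\Haus^1(E)$ for every such family.

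The first ingredient is Lemma~\ref{s-diam}: since each $E_i$ is a continuum, hence connected, it gives $\diam(E_i)\le\Haus^1(E_i)$. Summing over the (finitely many) indices yields
\[
\sum_i \diam(E_i) \le \sum_i \Haus^1(E_i)
\, .
\]
The second ingredient is the additivity of $\Haus^1$ on disjoint Borel sets, recalled in Remark~\ref{s-remhaus}(i). The sets $E_i$ are compact, hence Borel, and disjoint by assumption, so finite additivity applies and gives $\sum_i \Haus^1(E_i)=\Haus^1\big(\bigcup_i E_i\big)$. Finally, monotonicity of the outer measure $\Haus^1$ together with $\bigcup_i E_i\subseteq E$ yields $\Haus^1\big(\bigcup_i E_i\big)\le\Haus^1(E)$, so that
\[
\sum_i \diam(E_i) \le \Haus^1(E)
\, .
\]

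Taking the supremum over all admissible families $\{E_i\}$ gives $L_\delta(E)\le\Haus^1(E)$, as desired. I do not expect any genuine obstacle here: the statement is the ``easy'' inequality, and the only point worth checking is that the additivity step is legitimate, which it is because the $E_i$ are compact (hence Borel) and pairwise disjoint. The family being finite means we do not even need the full $\sigma$-additivity, only finite additivity, so no convergence issues arise.
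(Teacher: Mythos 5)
Your proof is correct and follows essentially the same route as the paper's: apply Lemma~\ref{s-diam} to each continuum $E_i$, use additivity of $\Haus^1$ on the disjoint Borel sets $E_i$ together with monotonicity to get $\sum_i \Haus^1(E_i) \le \Haus^1(E)$, and take the supremum. The paper compresses the additivity-plus-monotonicity step into the single inequality $\Haus^1(E) \ge \sum_i \Haus^1(E_i)$, which you have merely spelled out.
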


\begin{proof}
Consider any family 
$\{E_i\}$ as in the definition of $L_\delta(E)$:
Lemma~\ref{s-diam} yields
\[
\Haus^1(E) \ge \sum_i \Haus^1(E_i) \ge \sum_i\diam(E_i)
\, , 
\]
and we obtain $\Haus^1(E) \ge L_\delta(E)$ 
by taking the supremum over all~$\{E_i\}$.
\end{proof}

\begin{lemma}
\label{s-additivity}
Let $E$ be a subset of $X$ and let
$\{E_i\}$ be the family of all connected components of~$E$. Then
$L_\delta(E) = \sum_i L_\delta(E_i)$
for every $\delta>0$.%
\footnoteb{The sum at the right-hand side of this equality 
is defined as the supremum of all finite subsums, and is well 
defined even if the family $\{E_i\}$ is uncountable.}
\end{lemma}

The proof of this lemma is straightforward, and we omit it.

\begin{lemma}
\label{s-concompbordo}
Let $U$ be a nonempty compact set in~$X$, and let $F$ be a 
connected component of $U$.
If $F\cap\bd U=\varnothing$ then $F$ is also a connected 
component of $X$. Accordingly, if $X$ is connected and $F\ne X$ 
then $F\cap\bd U \ne \varnothing$.
\end{lemma}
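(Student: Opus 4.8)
The plan is to establish the first assertion and then obtain the second by contraposition: if $X$ is connected, its only connected component is $X$ itself, so a component $F$ of $U$ with $F\ne X$ cannot be a connected component of $X$, and the first assertion then forces $F\cap\bd U\ne\varnothing$. So the whole content lies in the implication $F\cap\bd U=\varnothing\Rightarrow F$ is a component of $X$.

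For that implication I would first reduce to an interior statement. Since $U$ is compact it is closed in $X$, hence $\bd U=U\setminus\mathrm{int}(U)$, and the hypothesis $F\cap\bd U=\varnothing$ is equivalent to $F\subseteq\mathrm{int}(U)$; in particular $F$ is disjoint from the compact set $\bd U$. The heart of the argument is a separation statement: there is a set $W$ that is relatively clopen in $U$ with $F\subseteq W$ and $W\cap\bd U=\varnothing$, so that $F\subseteq W\subseteq\mathrm{int}(U)$. This is the classical fact that in a compact Hausdorff space a connected component coincides with its quasi-component, equivalently that a component and a disjoint closed set can be separated by a relatively clopen set (see \cite{Eng}, Chapter~6); here $U$ is compact metric, hence Hausdorff, and I apply it with the disjoint closed set $\bd U$.

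Next I would upgrade $W$ to a clopen subset of $X$. It is closed in $X$, being closed in $U$ with $U$ closed in $X$; and it is open in $X$ because, writing $W=O\cap U$ with $O$ open in $X$ and using $W\subseteq\mathrm{int}(U)$, one gets $W=O\cap\mathrm{int}(U)$, an intersection of two open subsets of $X$. Now let $C$ be the connected component of $X$ containing $F$. Since $W$ is clopen in $X$ and $C$ is connected and meets $W$ (it contains $F$), we get $C\subseteq W\subseteq U$; thus $C$ is a connected subset of $U$ containing the component $F$, and maximality of $F$ among connected subsets of $U$ forces $C=F$. Hence $F$ is a connected component of $X$, as required.

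The only nontrivial ingredient is the clopen separation of $F$ from the disjoint compact set $\bd U$, which relies essentially on the compactness of $U$; everything else is routine manipulation of clopen sets and the defining maximality of components, so I expect this separation to be the main obstacle. If a self-contained proof were preferred I would argue as follows: since $F$ is a component of the compact space $U$ it equals the intersection of all relatively clopen subsets of $U$ containing it, so for each $p\in\bd U$ there is such a clopen set $W_p$ with $p\notin W_p$; the open sets $U\setminus W_p$ cover the compact set $\bd U$, and intersecting the $W_p$ corresponding to a finite subcover yields a single clopen $W$ with $F\subseteq W$ and $W\cap\bd U=\varnothing$.
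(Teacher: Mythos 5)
Your proposal is correct and follows essentially the same route as the paper's proof: both rest on the fact (Engelking, Theorem~6.1.23) that in the compact space $U$ the component $F$ is the intersection of the relatively clopen subsets of $U$ containing it, both use compactness of $\bd U$ (your finite-subcover argument is the dual phrasing of the paper's finite-intersection-property argument) to extract a single relatively clopen $W$ with $F\subseteq W$ and $W\cap\bd U=\varnothing$, and both observe that such a $W$ is then open and closed in $X$. The only cosmetic difference is the concluding step: you finish with a single clopen $W$ and the maximality of $F$ among connected subsets of $U$ applied to the component $C\subseteq W\subseteq U$ of $X$, whereas the paper exhibits $F$ as a connected set equal to an intersection of clopen subsets of $X$.
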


\begin{proof}
Let $\F$ be the family of all sets $A$ such that $F \subset A \subset U$ 
and $A$ is open and closed in $U$. Then $\F$ is closed
by finite intersection, and $F$ agrees with the intersection of all $A\in\F$
(see \cite{Eng}, Theorem~6.1.23).

If $F\cap\bd U=\varnothing$ then the 
intersection of the compact sets $A\cap \bd U$ with $A\in\F$ is empty, 
which implies that $A\cap \bd U$ is empty for at least one $A\in\F$.%
\footnoteb{The basic fact behind this assertion is that every family 
of compact sets with empty intersection admits a finite
subfamily with empty intersection.}
This means that $F$ is the intersection of all $A\in\F$ such that
$A\cap \bd U = \varnothing$. 
Note that these sets $A$ are open and closed in~$X$, and then 
$F$ is connected and agrees with the intersection of a family 
of open and closed sets.
This implies that $F$ is a connected component of $X$. 
\end{proof}

\begin{corollary}
\label{s-concompball}
Let $E$ be a connected set in $X$, let $B=B(x,r)$ be 
a ball with center $x\in E$ such that $E\cap B$ is compact 
and $E\setminus B\ne\varnothing$, 
and let $F$ be the connected component of $E\cap B$ that contains
$x$. Then $\Haus^1(B\cap E)\ge\Haus^1(F)\ge r$.
\end{corollary}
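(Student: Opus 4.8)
The plan is to reduce everything to the diameter bound $\Haus^1(F)\ge\diam(F)$ from Lemma~\ref{s-diam}, after exhibiting a point of $F$ lying at distance exactly $r$ from the center $x$. The first inequality $\Haus^1(B\cap E)\ge\Haus^1(F)$ is immediate from the monotonicity of $\Haus^1$, since $F\subseteq E\cap B=B\cap E$; so the substance of the statement is the second inequality $\Haus^1(F)\ge r$.

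First I would take $E$ itself as the ambient metric space and set $U:=E\cap B$. By hypothesis $U$ is compact, and it is nonempty since it contains $x$ (note $x\in E$ and $x\in B$, so $x\in U$ and hence $x\in F$). Moreover $E$ is connected, and since $E\setminus B\ne\varnothing$ there is a point of $E$ lying outside $U$, so $F\ne E$. I can therefore invoke the ``accordingly'' part of Lemma~\ref{s-concompbordo}, with the role of the connected space $X$ played by $E$, to conclude that $F$ meets the boundary $\bd_E U$ of $U$ taken relative to $E$.

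The key step is then to locate this boundary. Since $B$ is closed, $U$ is closed in $E$, so $\bd_E U=U\setminus\mathrm{int}_E U$. I would check that every $y\in E$ with $d(x,y)<r$ lies in $\mathrm{int}_E U$: the relative ball $\{z\in E:d(y,z)<r-d(x,y)\}$ is contained in $U$ by the triangle inequality. Hence every point $y\in\bd_E U$ satisfies $d(x,y)\ge r$, and since $U\subseteq B$ forces $d(x,y)\le r$, in fact $d(x,y)=r$. Choosing any $q\in F\cap\bd_E U$ and recalling $x\in F$, I obtain $\diam(F)\ge d(x,q)=r$, and Lemma~\ref{s-diam} closes the argument via $\Haus^1(F)\ge\diam(F)\ge r$.

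I expect the only delicate point to be the treatment of the relative boundary: one must be careful that $\bd_E U$ is computed inside $E$ (not in $X$), so that Lemma~\ref{s-concompbordo} applies to the connected space $E$ rather than to $X$, and one must verify that the use of the \emph{closed} ball $B$ does not produce boundary points at distance strictly less than $r$ from $x$ — which is precisely what the interior computation above rules out.
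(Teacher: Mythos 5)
Your proof is correct and takes essentially the same route as the paper's: apply Lemma~\ref{s-concompbordo} with $E$ in place of $X$ and $E\cap B$ in place of $U$ to obtain a point of $F$ on the boundary, deduce $\diam(F)\ge r$ since $x\in F$, and conclude via Lemma~\ref{s-diam}. The only difference is that you make explicit the relative-boundary computation $\bd_E(E\cap B)\subseteq\{y\in E:\,d(x,y)=r\}$, a detail the paper's two-line proof leaves implicit.
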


\begin{proof}
By applying Lemma~\ref{s-concompbordo} with $E$ and $E\cap B$ in 
place of $X$ and $U$, we obtain that $F$ intersects $\bd B$.
Then $\diam(F) \ge r$, and Lemma~\ref{s-diam} yields $\Haus^1(F) \ge r$.
\end{proof}

\begin{remark}
\label{s-remcomp}
The compactness assumptions 
in Lemma~\ref{s-concompbordo} and Corollary~\ref{s-concompball}
are both necessary.
Indeed it is possible to construct a bounded connected set $E$ in $\R^2$
and a ball $B$ with center $x\in E$ such that $E\setminus B\ne\varnothing$, 
but the connected component $F$ of $E\cap B$ that contains $x$ consists just 
of the point $x$; in particular $F \cap \bd B=\varnothing$, 
and $\Haus^1(F)=0$.
\end{remark}

\begin{lemma}
\label{s-basiclemma}
Let $E$ be a set in $X$ which is connected and locally compact. 
Then $\Haus^1_\delta(E) \le L_\delta(E)$ for every $\delta>0$.
\end{lemma}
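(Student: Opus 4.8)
The plan is to produce, for each $\eps>0$, a countable cover of $E$ by sets of diameter $\le\delta$ whose diameters sum to at most $L_\delta(E)$; this immediately gives $\Haus^1_\delta(E)\le L_\delta(E)$. If $L_\delta(E)=+\infty$ there is nothing to prove, so I assume it is finite.

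First I would invoke Zorn's lemma to choose a family $\{F_j\}$ of pairwise disjoint, nondegenerate subcontinua of $E$ with $\diam(F_j)\le\delta$ that is maximal with respect to inclusion. Since every finite subfamily satisfies $\sum\diam(F_j)\le L_\delta(E)<\infty$ by the very definition of $L_\delta$, and since for each $k$ at most $kL_\delta(E)$ of the $F_j$ can have diameter $\ge 1/k$, the family is necessarily countable and $\sum_j\diam(F_j)\le L_\delta(E)$. Because each $F_j$ covers itself, $\Haus^1_\delta(F_j)\le\diam(F_j)$, so by countable subadditivity of $\Haus^1_\delta$ it only remains to show that the leftover set $U:=E\setminus\bigcup_j F_j$ satisfies $\Haus^1_\delta(U)=0$.

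This is the heart of the matter, and is exactly where local compactness enters, through Corollary~\ref{s-concompball}. Fix $x\in U$ and assume $E$ is nondegenerate (otherwise the statement is trivial). For all sufficiently small $r\le\delta/2$ the set $E\cap B(x,r)$ is compact and $E\setminus B(x,r)\ne\varnothing$, so the component $C_r$ of $E\cap B(x,r)$ containing $x$ is a nondegenerate subcontinuum with $r\le\diam(C_r)\le\delta$. By maximality $C_r$ cannot be adjoined to the family, hence it must meet some $F_j$; this exhibits a point of $\bigcup_j F_j$ within distance $r$ of $x$. Letting $r\to0$ shows $x\in\overline{\bigcup_j F_j}$, so $U\subset\overline{\bigcup_j F_j}\setminus\bigcup_j F_j$.

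Finally I would localize this to the tails of the series. Since $\bigcup_{j\le N}F_j$ is closed, $\overline{\bigcup_j F_j}=\bigcup_{j\le N}F_j\cup\overline{\bigcup_{j>N}F_j}$, and as $U$ misses the first (closed, already-covered) part we obtain $U\subset\overline{\bigcup_{j>N}F_j}$ for every $N$. Using that $\Haus^1_\delta$ is unchanged under taking closures and is subadditive, $\Haus^1_\delta(U)\le\Haus^1_\delta\bigl(\overline{\bigcup_{j>N}F_j}\bigr)=\Haus^1_\delta\bigl(\bigcup_{j>N}F_j\bigr)\le\sum_{j>N}\diam(F_j)$, which tends to $0$ as $N\to\infty$ because the full series converges. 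Hence $\Haus^1_\delta(U)=0$ and therefore $\Haus^1_\delta(E)\le\sum_j\diam(F_j)\le L_\delta(E)$. The delicate point — and the step I expect to be the main obstacle — is precisely the negligibility of $U$: without local compactness the ball-components $C_r$ can collapse to the single point $\{x\}$ (cf.\ Remark~\ref{s-remcomp}), and then the maximality argument fails to force $x$ into $\overline{\bigcup_j F_j}$.
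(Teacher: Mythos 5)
Your steps 1--3 are sound (countability of the maximal family, $\sum_j\diam(F_j)\le L_\delta(E)$, and the use of local compactness via Corollary~\ref{s-concompball} to show that every point of the residue $U$ lies in $\overline{\bigcup_j F_j}$), and you correctly identified the negligibility of $U$ as the crux --- but your justification of that crux is wrong, and in fact the approach breaks there irreparably. The assertion that $\Haus^1_\delta$ is unchanged under taking closures is false: a cover $\{A_i\}$ yields closed sets $\overline{A_i}$ of the same diameters, but for countably many sets $\bigcup_i\overline{A_i}$ need not contain $\overline{\bigcup_i A_i}$; concretely $\Haus^1_\delta(\mathbb{Q}\cap[0,1])=0$ while $\Haus^1_\delta([0,1])=1$. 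Worse, the conclusion you want --- that the residue of a Zorn-maximal family is $\Haus^1_\delta$-null --- is itself false. Take $X=E=[0,1]$ and $\delta\ge 1$, let $C$ be a fat Cantor set with $\Leb^1(C)=1/2$, and let $F_j:=[a_j,b_j]$ be the closures of the complementary intervals of $C$. Since $C$ is perfect these are pairwise disjoint nondegenerate continua, and since $C$ is nowhere dense every nondegenerate subinterval of $[0,1]$ meets some $F_j$, so the family is maximal; yet $U$ is $C$ minus countably many endpoints, so $\Haus^1_\delta(U)\ge\Leb^1(C)=1/2$ (on the line any cover satisfies $\sum_i\diam(A_i)\ge\Leb^1$ of the covered set, for every $\delta$), while $\sum_j\diam(F_j)=1/2<1=\Haus^1_\delta(E)$. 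Everything you prove through step 3 holds in this example, including $U\subset\overline{\bigcup_{j>N}F_j}$ for every $N$: maximality only yields a nowhere-density statement about $U$, and nowhere dense sets can carry positive measure. This is the same phenomenon as in Remark~\ref{s-remcomp} in spirit: qualitative topological smallness does not control $\Haus^1_\delta$.

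The paper closes exactly this gap by replacing maximality with \emph{quantitative near-optimality}. It fixes $\eps>0$ and a \emph{finite} disjoint family $\{E_i\}$ with $\sum_i\diam(E_i)\ge L_\delta(E)-\eps$, covers the residue $E'$ by balls $B(x,r)$, $r\le\delta/10$, with $E\cap B(x,r)$ compact and contained in $E'$, extracts a disjoint Vitali subfamily $B_j=B(x_j,r_j)$ whose dilated balls $B(x_j,5r_j)$ cover $E'$, and uses Corollary~\ref{s-concompball} (just as you do) to place disjoint continua $F_j\subset B_j\cap E$ with $\diam(F_j)\ge r_j$. Adjoining these $F_j$ to the near-optimal family and comparing with the supremum defining $L_\delta(E)$ forces $\sum_j r_j\le\eps$, whence $\Haus^1_\delta(E)\le L_\delta(E)+10\eps$. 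The point is that being within $\eps$ of the supremum bounds how much extra continuum mass --- and hence, via the Vitali balls, how much $\Haus^1_\delta$-content --- the residue can hold; a maximal family carries no such bound, as the fat Cantor example shows. Your argument can be repaired only by importing this $\eps$-competition step, at which point it becomes the paper's proof.
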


\begin{proof}
We can clearly assume that $L_\delta(E)$ is finite.
We fix for the time being $\eps>0$, and
choose a finite disjoint family $\{E_i\}$ of continua 
contained in $E$ with $\diam(E_i)\le\delta$ such that
\begin{equation}
\label{e-2.3}
\sum_i \diam(E_i) \ge L_\delta(E)-\eps
\, .
\end{equation}

Next we set $E':= \smash{ E\setminus\big( \cup_i E_i \big) }$.
Since the union of all $E_i$ is closed and $E$ is locally compact, for every 
$x\in E'$ we can find a ball $B(x,r)$ with radius $r\le \delta/10$ 
such that $E\cap B(x,r)$ is compact and contained in $E'$. 
Using Vitali's covering lemma (cf.~\cite{AT}, Theorem~2.2.3), 
we can extract from this family of balls
a subfamily of disjoint balls $B_j=B(x_j,r_j)$ 
such that the balls $B_j':=B(x_j,5r_j)$ cover~$E'$. 

Then the balls $B_j'$ together with the sets $E_i$
cover the set $E$, and since their diameters do not exceed $\delta$, 
the definition of $\smash{ \Haus^1_\delta(E) }$ yields
\begin{equation}
\label{e-2.4}
\Haus^1_\delta(E) 
\le \sum_i \diam(E_i) + \sum_j \diam(B_j')
\le L_\delta(E) + 10\, \sum_j r_j
\, .
\end{equation}

On the other hand, by Corollary~\ref{s-concompball}, for every $j$ 
we can find a closed, connected set $F_j$ contained in $B_j\cap E$ with 
diameter at least $r_j$. Since the balls $B_j$ are disjoint
and contained in $E'$, 
we have that the sets $F_j$ together with the
sets $E_i$ form a disjoint family of continua contained in $E$
with diameters at most $\delta$, and therefore, using the definition 
of $L_\delta(E)$ and \eqref{e-2.3},
\[
L_\delta(E) 
\ge \sum_i \diam(E_i) + \sum_j \diam(F_j) 
\ge L_\delta(E) -\eps + \sum_j r_j
\, , 
\]
which implies $\eps \ge \sum_j r_j$. 
Hence \eqref{e-2.4} yields
$\Haus^1_\delta(E) \le L_\delta(E) + 10\,\eps$,
and the proof is complete
because $\eps$ is arbitrary.
\end{proof}

\begin{proof}[Proof of Theorem~\ref{s-lunghezza}]
By Lemma~\ref{s-easyineq}, it suffices to prove that
\begin{equation}
\label{e-lunghezza1}
L_\delta(E) \ge \Haus^1(E)
\, .
\end{equation}
We assume first that $E$ is connected.
In this case Lemma~\ref{s-basiclemma} 
and the definition of $L_\delta$ in Subsection~\ref{s-conmeas} 
yield 
\[
L_\delta(E) 
\ge L_{\delta'}(E)
\ge \Haus^1_{\delta'}(E) 
\quad
\text{for every $0 < \delta' \le \delta$,}
\]
and we obtain \eqref{e-lunghezza1} by taking the limit as $\delta'\to 0$.

If $E$ is not connected, then \eqref{e-lunghezza1} holds for every connected 
component of $E$, and we obtain that it holds for $E$ as well using
Lemma~\ref{s-additivity}, the subadditivity of $\Haus^1$, and the fact that
$E$ has countably many connected components.
\end{proof}

The next lemma is used in the proof of Proposition~\ref{s-lunghezza2}.

\begin{lemma}
\label{s-deltapartitions2}
Let $F$ be a continuum in $X$,
let $\{E_i\}$ be a countable family of connected 
subsets of $X$, and let $E'$ be the union of all $E_i$.
Then
\[
\Haus^1(F\setminus E') + \sum_i \diam(E_i) \ge \diam(F)
\, .
\]
\end{lemma}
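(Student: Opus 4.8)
The plan is to adapt the one-dimensional projection argument used in Lemma~\ref{s-diam}. Since $F$ is a continuum, hence compact, I would first choose $x_0,x_1\in F$ realizing the diameter, i.e.\ $d(x_0,x_1)=\diam(F)$, and consider the $1$-Lipschitz function $f:X\to\R$ defined by $f(x):=d(x,x_0)$. Because $F$ is connected, $f(F)$ is a subinterval of $\R$; it is contained in $[0,\diam(F)]$ and contains both $f(x_0)=0$ and $f(x_1)=\diam(F)$, so $f(F)=[0,\diam(F)]$ and hence $\Haus^1(f(F))=\diam(F)$ (recall that on $\R$ the measure $\Haus^1$ agrees with $\Leb^1$ by Remark~\ref{s-remhaus}(i)).

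The next step is to split $F$ into the part covered by $E'$ and its complement, and to bound the two images under $f$ separately. Writing $f(F\cap E')=\bigcup_i f(F\cap E_i)$, I would use that each $E_i$ is connected, so $f(E_i)$ is an interval, and estimate
\[
\Haus^1\big(f(F\cap E_i)\big)\le \Haus^1\big(f(E_i)\big)=\diam\big(f(E_i)\big)\le \Lip(f)\,\diam(E_i)=\diam(E_i),
\]
so that $\sigma$-subadditivity gives $\Haus^1(f(F\cap E'))\le\sum_i\diam(E_i)$. For the complementary part, Remark~\ref{s-remhaus}(ii) yields $\Haus^1(f(F\setminus E'))\le\Lip(f)\,\Haus^1(F\setminus E')=\Haus^1(F\setminus E')$.

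Combining these estimates with the subadditivity of $\Haus^1$ on $\R$ then gives
\[
\diam(F)=\Haus^1(f(F))\le \Haus^1\big(f(F\cap E')\big)+\Haus^1\big(f(F\setminus E')\big)\le \sum_i\diam(E_i)+\Haus^1(F\setminus E'),
\]
which is exactly the asserted inequality. I do not expect a genuine obstacle here, as this is the projection trick of Lemma~\ref{s-diam} with the target set tracked through $f$. The only point requiring a little care is that the sets $E_i$ are subsets of $X$ and need not lie in $F$, so one must intersect with $F$ and invoke the containment $f(F\cap E_i)\subseteq f(E_i)$ in order to keep the bound expressed in terms of $\diam(E_i)$. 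Since every step is an outer-measure estimate, no measurability hypothesis on the $E_i$ or on $F\setminus E'$ is needed, and the cases where $\sum_i\diam(E_i)$ or $\Haus^1(F\setminus E')$ is infinite are covered automatically.
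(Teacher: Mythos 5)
Your proof is correct and is essentially the paper's own argument: the same projection $f(x):=d(x,x_0)$ with $x_0,x_1$ realizing $\diam(F)$, the same covering of $f(F)$ by $f(F\setminus E')$ together with the intervals $f(E_i)$, and the same chain of outer-measure estimates using $\Lip(f)=1$ and subadditivity. The only cosmetic difference is that you intersect the $E_i$ with $F$ and identify $f(F)=[0,\diam(F)]$ exactly, whereas the paper covers $f(F)$ directly by the sets $f(E_i)$ and $f(F\setminus E')$ and concludes from $|f(F)|\ge\diam(F)$.
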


\begin{proof}
Take $x_0,x_1\in F$ such that $d(x_0,x_1)=\diam(F)$, 
and let $f:X\to\R$ be the Lipschitz function given by $f(x):=d(x,x_0)$. 
Then
\begin{align*}
  \Haus^1(F\setminus E') + \sum_i \diam(E_i) 
& \ge |f(F\setminus E')| + \sum_i \diam(f(E_i)) \\
&   = |f(F\setminus E')| + \sum_i |f(E_i)| 
  \ge |f(F)|
  \ge \diam(F)
  \, ,
\end{align*}
where the first inequality follows from the fact that $\Lip(f)=1$, 
for the equality we use that each $f(E_i)$ is an
interval, for the second inequality we use that 
the sets $f(E_i)$ together with $f(F\setminus E')$ cover
$f(F)$, and the last inequality follows from the fact that 
$f(F)$ is an interval that contains $f(x_0)=0$
and $f(x_1)=d(x_0,x_1)=\diam(F)$.
\end{proof}

\begin{proof}[Proof of Proposition~\ref{s-lunghezza2}]
To prove \eqref{e-lunghezza2a} we use
the definition of $\delta$-partition and estimate 
$\Haus^1(E_i)\ge \diam(E_i)$ (see Lemma~\ref{s-diam}).

To prove the second part of the statement,
we first choose $\delta_1>0$ such that $\Haus^1(E) > m+\delta_1$, 
and use Theorem~\ref{s-lunghezza} to find finitely many 
disjoint continua $F_j$ contained in $E$ such that 
\begin{equation}
\label{s-lunghezza2-1}
\sum_j \diam(F_j) \ge m+\delta_1
\, .
\end{equation}
Then we take $\delta_0$ with $0<\delta_0\le\delta_1$
such that $\dist(F_j,F_k)>\delta_0$ for every $j\ne k$.

Consider now any $\delta$-partition $\{E_i\}$
of $E$ with $\delta\le\delta_0$. 
Let $E'$ be the union of all $i$.
For every $j$, let $I_j$ the the collection 
of all indices~$i$ such that $E_i$ intersects $F_j$, 
and let $E'_j$ be the union of all $E_i$ with $i\in I_j$. 
By the choice of $\delta_0$
the collections $I_j$ are pairwise disjoint, and therefore
\begin{align*}
  \Haus^1(E\setminus E') 
& + \sum_i \diam(E_i) \ge {} \\ 
& \ge \sum_j \bigg[ \Haus^1(F_j\setminus E'_j) 
                    + \sum_{i\in I_j} \diam(E_i) \bigg]
  \ge \sum_j \diam(F_j)
\, ,
\end{align*}
where the last inequality follows from 
Lemma~\ref{s-deltapartitions2}.
Since $\{E_i\}$ is a $\delta$-partition of $E$ 
we obtain that
\begin{equation}
\label{s-lunghezza2-2}
\delta + \sum_i \diam(E_i) \ge \sum_j \diam(F_j)
\, ,
\end{equation}
and putting together \eqref{s-lunghezza2-1}, \eqref{s-lunghezza2-2}
and the fact that $\delta\le\delta_0\le\delta_1$ 
we obtain \eqref{e-lunghezza2b}.
\end{proof}

We now pass to the proof of Theorem~\ref{s-golab}.

\begin{parag}[$\boldsymbol\delta$-chains 
and $\boldsymbol\delta$-connected sets]
\label{s-deltachain}
Given $\delta>0$, a \emph{$\delta$-chain} in $X$ is any finite 
sequence of points $\{x_i: \, i=0,\dots,n\}$ contained in $X$
such that $d(x_{i-1}, x_i)\le\delta$ for every $i>0$. 
We call $x_0$ and $x_n$ \emph{endpoints} of the $\delta$-chain, 
and we say that the $\delta$-chain connects $x_0$ and~$x_n$.
The \emph{length} of the $\delta$-chain is 
\[
\Len(\{x_i\}) 
:= \sum_{i=1}^n d(x_{i-1}, x_i)
\, .
\]
Finally, we say that a set $E$ in $X$ is $\delta$-connected 
if every couple of points $x,x'\in E$ is connected by 
a $\delta$-chain contained in~$E$.
\end{parag}

\begin{lemma}
\label{s-deltaconn}
If $E$ is a connected set in $X$ then it is $\delta$-connected
for every $\delta>0$. 
\end{lemma}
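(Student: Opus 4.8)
The plan is to use the standard clopen-set argument for connectedness. Fix $\delta>0$ and a base point $x_*\in E$, and let $A$ be the set of all points $y\in E$ that are connected to $x_*$ by a $\delta$-chain contained in $E$. I want to show that $A=E$; the full statement then follows, since given any two points $x,x'\in E$ I can reverse the $\delta$-chain joining $x_*$ to $x$ and concatenate it with the one joining $x_*$ to $x'$ to obtain a $\delta$-chain in $E$ from $x$ to $x'$.

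First I would check that $A$ is nonempty: the trivial one-point chain $\{x_*\}$ shows $x_*\in A$. Next I would show that $A$ is open in $E$. Indeed, if $y\in A$ and $z\in E$ satisfies $d(y,z)<\delta$, then appending $z$ to a $\delta$-chain joining $x_*$ to $y$ produces a $\delta$-chain in $E$ joining $x_*$ to $z$; hence the relatively open ball $\{z\in E:\, d(y,z)<\delta\}$ is contained in $A$. The same observation, read contrapositively, shows that the complement $E\setminus A$ is also open in $E$: if $y\notin A$ and some $z\in E$ with $d(y,z)<\delta$ had $z\in A$, then appending $y$ to a $\delta$-chain joining $x_*$ to $z$ would force $y\in A$, a contradiction; so the relatively open ball around $y$ lies in $E\setminus A$.

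Thus $A$ is a nonempty subset of $E$ that is both open and closed in $E$, and since $E$ is connected this forces $A=E$, completing the argument. I do not expect any real obstacle here: the only point requiring a little care is to work with the open ball of radius $\delta$ (the strict inequality $d(y,z)<\delta$), so that both $A$ and its complement come out open, while the defining inequality $d(x_{i-1},x_i)\le\delta$ of a $\delta$-chain is still respected when the new point is appended.
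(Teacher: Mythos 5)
Your proof is correct and follows essentially the same route as the paper: both fix a base point, consider the set of points reachable by a $\delta$-chain, and show it is nonempty, open, and closed in $E$, so that connectedness forces it to be all of $E$. The only cosmetic differences are that you verify closedness by showing the complement is open (the paper asserts closedness directly) and that you reduce arbitrary pairs to the base point by reversing and concatenating chains, whereas the paper simply notes the base point was arbitrary; neither affects the validity.
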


\begin{proof}
Fix $x\in E$ and let $A_x$
be the set of all points $x'\in E$ which are connected
to $x$ by a $\delta$-chain contained in $E$. We must show that $A_x=E$.

One easily checks that:
\begin{itemizeb}
\item
$A_x$ is closed in $E$ and contains $x$; 
\item
if $x' \in A_x$ then $B(x',\delta)\cap E$ is contained in $A_x$;
thus $A_x$ is open in $E$.
\end{itemizeb}
Since $A_x$ is nonempty, open and closed in $E$, and $E$ is connected, 
we conclude that $A_x=E$, as desired.
\end{proof}

\begin{lemma}
\label{s-lemmachiave0}
Let $K$ be a compact set in $X$ with at most $m$ connected components,
which contains a $\delta$-connected subset $K'$.
Then 
\begin{equation}
\label{e-stimachiave0}
\Haus^1(K) \ge \diam(K') - m\delta
\, .
\end{equation}
\end{lemma}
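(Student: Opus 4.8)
The plan is to project $K$ onto the real line by the $1$-Lipschitz function $f(x):=d(x,x_0)$, where $x_0$ is a point realizing (almost) the diameter of $K'$, and then reduce the statement to a counting argument about intervals. Fix $x_0,x_1\in K'$ and set $D:=d(x_0,x_1)$; by Remark~\ref{s-remhaus}(ii) together with $\Lip(f)=1$ we have $\Haus^1(K)\ge|f(K)|$, so it suffices to bound $|f(K)|$ from below. Since $K$ is compact with at most $m$ connected components, and each such component is connected and compact, its image under the continuous map $f$ is a compact interval; hence $f(K)$ is a union of at most $m$ compact intervals, and both $f(x_0)=0$ and $f(x_1)=D$ belong to it.

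Next I would exploit that $K'$ is $\delta$-connected: choose a $\delta$-chain $x_0=y_0,y_1,\dots,y_n=x_1$ contained in $K'\subseteq K$, and consider the finite sequence $v_i:=f(y_i)$, which lies in $f(K)$, starts at $v_0=0$, ends at $v_n=D$, and has increments $|v_i-v_{i-1}|\le d(y_{i-1},y_i)\le\delta$. The key observation is that this controls the gaps of $f(K)$: for any connected component $(a,b)$ of $[0,D]\setminus f(K)$, let $i$ be the first index with $v_i\ge b$; since $v_0=0\le a$ and $v_{i-1}<b$ with $v_{i-1}\notin(a,b)$ force $v_{i-1}\le a$, we get $b-a\le v_i-v_{i-1}\le\delta$. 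Thus every gap has length at most $\delta$.

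Because $f(K)$ is a union of at most $m$ intervals and contains both endpoints $0$ and $D$, the open set $[0,D]\setminus f(K)$ has at most $m-1$ connected components. Combining this with the bound $\delta$ on each gap gives $|f(K)\cap[0,D]|\ge D-(m-1)\delta$, and therefore $\Haus^1(K)\ge|f(K)|\ge D-(m-1)\delta$. Letting $d(x_0,x_1)$ approach $\diam(K')$ through suitable choices of $x_0,x_1\in K'$ yields $\Haus^1(K)\ge\diam(K')-(m-1)\delta\ge\diam(K')-m\delta$, which is even slightly stronger than the claimed estimate \eqref{e-stimachiave0}.

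The main obstacle is the combinatorial-geometric step of the second and third paragraphs: one must correctly argue both that there are at most $m-1$ gaps inside $[0,D]$ (using that the at most $m$ components of $K$ map to intervals and that the endpoints $0$ and $D$ lie in $f(K)$) and that each gap is jumped over by a single step of the chain, via the discrete intermediate-value argument. Everything else—the Lipschitz estimate and the passage to the diameter—is routine.
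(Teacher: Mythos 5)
Your proof is correct and follows essentially the same route as the paper's: project $K$ by the $1$-Lipschitz function $f(x)=d(x,x_0)$, observe that $f(K)$ has at most $m$ components and contains $0$ and $D$, so there are at most $m-1$ gaps, and use the $\delta$-chain (your discrete intermediate-value step is exactly how one unpacks the paper's remark that $f(K')$ is $\delta$-connected) to bound each gap by $\delta$. The only difference is cosmetic: the paper fixes $x_0,x_1$ with $d(x_0,x_1)\ge\diam(K')-\delta$ and absorbs that loss into $m\delta$, whereas you take the supremum over pairs at the end and get the marginally sharper bound $\diam(K')-(m-1)\delta$.
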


\begin{proof}
We can assume that $m$ is finite.
We then take $x_0,x_1\in K'$ such that
\begin{equation}
\label{e-stima0.1}
\ell:=d(x_0,x_1) \ge \diam(K') - \delta
\, ,
\end{equation}
and let $H:=f(K)$ where $f:X\to\R$ is defined by 
$f(x):=d(x,x_0)$. It is easy to check that:
\begin{itemizeb}
\item[(a)]
$\Lip(f)=1$, and therefore
$\Haus^1(K) \ge |H| \ge \ell - \big| (0,\ell)\setminus H \big|$;
\item[(b)]
the sets $f(K')$ and $H$ contain $0=f(x_0)$ and $\ell=f(x_1)$;
\item[(c)]
$H$ has at most $m$ connected components because
so does $K$;
\item[(d)]
the set $f(K')$ is $\delta$-connected because $K'$ is 
$\delta$-connected and $\Lip(f)=1$.
\end{itemizeb}
Statements~(b) and (c) imply that the open set $(0,\ell)\setminus H$ 
has at most $m-1$ connected components, while statements 
(b) and (d) imply that each of these connected components 
has length at most $\delta$; in particular 
\begin{equation}
\label{e-stima0.3}
\big| (0,\ell)\setminus H \big| \le (m-1)\delta
\, .
\end{equation}
Using the estimate in (a), \eqref{e-stima0.1},
and \eqref{e-stima0.3} we finally obtain \eqref{e-stimachiave0}.
\end{proof}

\begin{lemma}
\label{s-lemmachiave1}
Let $K$ be a compact set in $X$ with at most $m$ connected components, 
let $K'$ be a $\delta$-connected subset of $K$, 
and let $U$ be a closed set in $X$ which contains $K'$ 
and satisfies
\begin{equation}
\label{e-ipotesichiave}
\dist(K',\bd U) \ge r
\end{equation}
for some $r>0$.%
\,\footnoteb{If $\bd U$ is empty then $\dist(K',\bd U)=+\infty$ (cf.~Footnote~\ref{footnote3}) 
and then \eqref{e-ipotesichiave} holds for every $r>0$.}
Then 
\begin{equation}
\label{e-stimachiave}
\Haus^1(K\cap U) \ge \Big( 1 - \frac{\delta}{r} \Big) \, \diam(K') - m\delta
\, .
\end{equation}
\end{lemma}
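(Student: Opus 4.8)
The plan is to run the same scheme as in the proof of Lemma~\ref{s-lemmachiave0}: pick $x_0,x_1\in K'$ with $\ell:=d(x_0,x_1)\ge\diam(K')-\delta$, set $f(x):=d(x,x_0)$ (so $\Lip(f)=1$), and use that $\Haus^1(K\cap U)\ge|f(K\cap U)|$ by Remark~\ref{s-remhaus}. Writing $H:=f(K\cap U)$, the points $0=f(x_0)$ and $\ell=f(x_1)$ lie in $H$, and $H$ contains the $\delta$-connected set $f(K')$ (the $1$-Lipschitz image of the $\delta$-connected set $K'$); exactly as in Lemma~\ref{s-lemmachiave0} this forces every connected component of $(0,\ell)\setminus H$ to have length at most $\delta$, whence $\Haus^1(K\cap U)\ge|H|\ge\ell-(\#\text{gaps})\,\delta$. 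The whole point is therefore to bound the number of these gaps, i.e.\ the number of connected components of $K\cap U$ meeting $K'$ — and this is where the hypotheses on $U$ must enter, since a priori $K\cap U$ could have arbitrarily many components.

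The key step is a dichotomy for the components of $K\cap U$, obtained by applying Lemma~\ref{s-concompbordo} with $K$ in the role of the ambient space and $K\cap U$ in the role of the compact set; note that since $U$ is closed one has $\bd_K(K\cap U)\subseteq\bd U$. Let $F$ be a component of $K\cap U$ meeting $K'$, and fix $w\in F\cap K'$. If $F\cap\bd_K(K\cap U)=\varnothing$, then Lemma~\ref{s-concompbordo} forces $F$ to be a connected component of $K$, and there are at most $m$ of these. Otherwise $F$ contains a point $z\in\bd U$, so that $\diam(F)\ge d(w,z)\ge\dist(K',\bd U)\ge r$ by \eqref{e-ipotesichiave}, and Lemma~\ref{s-diam} gives $\Haus^1(F)\ge r$.

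Writing $S:=\Haus^1(K\cap U)$ — which I may assume finite, and I may also assume $\delta<r$, since otherwise \eqref{e-stimachiave} is trivial — the components of the second type are disjoint Borel subsets of $K\cap U$ each of measure $\ge r$, so there are at most $S/r$ of them. Hence the number $N$ of components of $K\cap U$ meeting $K'$ satisfies $N\le m+S/r$. Each such component contributes a compact subinterval $f(F)$ of $H$, and their union $H'$ contains $0$, $\ell$ and $f(K')$, so $(0,\ell)\setminus H\subseteq(0,\ell)\setminus H'$ has at most $N-1$ components, each of length $\le\delta$. Thus $S\ge|H|\ge\ell-(N-1)\delta\ge\ell-(m-1)\delta-\tfrac{\delta}{r}S$, which rearranges to $S(1+\delta/r)\ge\ell-(m-1)\delta$. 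Feeding in $\ell\ge\diam(K')-\delta$ and the elementary bound $\tfrac{1}{1+\delta/r}\ge1-\tfrac{\delta}{r}$ then yields \eqref{e-stimachiave}.

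The main obstacle is exactly this component count. Unlike in Lemma~\ref{s-lemmachiave0}, where $f(K)$ automatically has at most $m$ components, here the bound on the number of boundary-touching components of $K\cap U$ is \emph{self-referential}: it is expressed through the very quantity $\Haus^1(K\cap U)$ that we are trying to estimate. It is precisely this circularity, resolved by rearranging $S(1+\delta/r)\ge\ell-(m-1)\delta$, that produces the multiplicative correction $1-\delta/r$ in place of the clean loss $-m\delta$ of Lemma~\ref{s-lemmachiave0}. I would finally check the degenerate cases — notably $\bd U=\varnothing$, where all relevant components are of the first type, $N\le m$, and the estimate collapses to that of Lemma~\ref{s-lemmachiave0}.
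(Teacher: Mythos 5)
Your proof is correct and follows essentially the same route as the paper's: the same dichotomy for the components of $K\cap U$ meeting $K'$ obtained from Lemma~\ref{s-concompbordo}, the same self-referential count $N\le m+\Haus^1(K\cap U)/r$, and the same rearrangement of $S(1+\delta/r)\ge\diam(K')-m\delta$ producing the factor $1-\delta/r$. The only (harmless) difference is that where the paper applies Lemma~\ref{s-lemmachiave0} to the union $K''$ of those $N$ components (losing $N\delta$), you re-run its projection argument inline and count at most $N-1$ gaps of length $\le\delta$, a marginally sharper but equivalent bookkeeping.
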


\begin{proof}
We can clearly assume that both $\Haus^1(K\cap U)$ and $m$ are finite.

Let $\{K_i\}$ be the collection of all connected components 
of $K\cap U$ that intersect $K'$, and let $N$ be their number.
We claim that $N$ is finite, and more precisely
\begin{equation}
N \le m + \frac{1}{r} \Haus^1(K\cap U)
\, . 
\label{e-stima1}
\end{equation}
To prove this estimate, 
note that the components $K_i$ which do not intersect $\bd U$ are 
also connected components of $K$ (apply Lemma~\ref{s-concompbordo}
with $K$, $U\cap K$ and $K_i$ in place of $X$ and $U$ and $F$, respectively), 
and therefore their number is at most $m$. 

There are now two cases:
either there are no components $K_i$ that intersect $\bd U$,%
\footnoteb{This includes the case when $\bd U$ is empty.}
and then $N \le m$, which implies~\eqref{e-stima1},
or there are components $K_i$ that intersect $\bd U$.
Since these components intersect also $K'$ they satisfy
\[
\Haus^1(K_i) \ge \diam(K_i) \ge r 
\]
(use Lemma~\ref{s-diam} and assumption~\eqref{e-ipotesichiave}),
and since their number is at least $N-m$ we obtain 
\[
\Haus^1(K\cap U) 
\ge (N - m)r 
\, ,
\]
which implies~\eqref{e-stima1}.

\begin{figure}[h]
\begin{center}
  \includegraphics[scale=1.2]{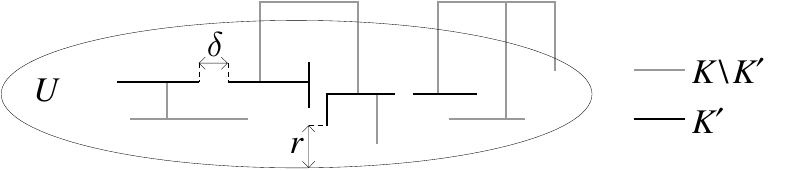}  
  \caption{In this example, $K$ has three connected components, 
  while $K\cap U$ has six connected components, four of which intersect $K'$.} 
  \label{figure1}
\end{center}
\end{figure}

Let now $K''$ be the union of all components $K_i$. 
Then $K''$ contains $K'$ and is compact (because it is 
a finite union of closed subsets of $K$), 
and applying Lemma~\ref{s-lemmachiave0} with $K''$ 
in place of $K$ we obtain
\[
\Haus^1(K\cap U) 
\ge \Haus^1(K'') 
\ge \diam(K') -N\delta
\, .
\]
Using estimate \eqref{e-stima1} we then get
\[
\Big(1 +\frac{\delta}{r} \Big)
\Haus^1(K\cap U) 
\ge \diam(K') -m\delta
\, ,
\]
which in turn implies \eqref{e-stimachiave}.
\end{proof}

\begin{proof}[Proof of Theorem~\ref{s-golab}]
We must show that for every sequence of compact sets $K_n\in \F_m$ 
that converge in the Hausdorff distance to some $K\in\F_m$ there holds 
$\liminf\Haus^1(K_n) \ge \Haus^1(K)$.
Taking into account Theorem~\ref{s-lunghezza} it suffices to prove that
\begin{equation}
\label{e-golab1}
\liminf_{n\to +\infty} \Haus^1(K_n) 
\ge \sum_i \diam(E_i)
\end{equation}
for every finite family $\{E_i\}$ of 
disjoint continua contained in $K$.

Since the sets $E_i$ are compact and disjoint, 
we can find $r>0$ and a family of disjoint closed sets $U_i$ 
such that each $U_i$ contains $E_i$ and satisfies%
\,\footnoteb{If there is only one $E_i$ we can take $U_i:=X$, 
and then $\dist(E_i,X\setminus U_i)=+\infty$ (Footnote~\ref{footnote3}).}
\begin{equation}
\label{e-2.12}
\dist(E_i,X\setminus U_i) \ge r
\, .
\end{equation}
Then \eqref{e-golab1} follows by showing that, for every $i$,
\begin{equation}
\label{e-golab2}
\liminf_{n\to +\infty} \Haus^1(K_n\cap U_i) 
\ge \diam(E_i)
\, .
\end{equation}

Let us fix $i$ and choose $\delta$ such that $0<\delta<r$. 
Since $E_i$ is connected, it is also $\delta$-connected
(Lemma~\ref{s-deltaconn}) and therefore it contains a 
$\delta$-chain $\{x_0,\dots,x_k\}$ with
\[
d(x_0,x_k) \ge \diam(E_i) - \delta
\, .
\]

Consider now any $n$ such that $d_H(K_n,K) \le \delta$
(that is, any $n$ sufficiently large).
By the definition of Hausdorff distance, for every 
point $x_j$ in the $\delta$-chain we can choose a point 
$y_j\in K_n$ with $d(x_j,y_j)\le\delta$, and we set 
$K'_n:=\{y_0,\dots,y_k\}$.
One readily checks that
\begin{itemizeb}
\item[(a)]
$K'_n$ is a $3\delta$-chain, 
and therefore is $3\delta$-connected;
\item[(b)]
$\diam(K'_n) \ge d(y_0,y_k) \ge d(x_0,x_k) -2\delta \ge \diam(E_i)-3\delta$;
\item[(c)]
$\dist(K'_n,\bd U_i) 
\ge \dist(K'_n,X\setminus U_i) 
\ge \dist(E_i,X\setminus U_i) - \delta 
\ge r-\delta$.%
\footnoteb{Note that this chain of inequalities holds also
when $\bd U_i$ and $X\setminus U_i$ are empty.}
\end{itemizeb} 
We can then apply Lemma~\ref{s-lemmachiave1}
with $K_n$, $K'_n$, $U_i$, $3\delta$ and $r-\delta$ 
in place of $K$, $K'$, $U$, $\delta$ and $r$, 
respectively, and obtain
\begin{align*}
\Haus^1(K_n \cap U_i) 
& \ge \Big( 1 - \frac{3\delta}{r-\delta} \Big) \, \diam(K'_n) - 3m\delta \\
& \ge \Big( 1 - \frac{3\delta}{r-\delta} \Big) \, (\diam(E_i)-3\delta) - 3m\delta
  \, .
\end{align*}
To obtain \eqref{e-golab2} we take the liminf 
as $n\to+\infty$ and then the limit as $\delta\to 0$.
\end{proof}

\section{Basic properties of paths in metric spaces}
\label{s3}
In this section we recall some basic facts
concerning paths with finite length, focusing
in particular on two results that will be used in the following 
section, namely Propositions~\ref{s-pathconn}
and \ref{s-lengthformula}.
Both statements are well-known at least in the Euclidean case. 

\begin{parag}[Paths]
\label{s-path}
A \emph{path} in $X$ is a continuous map $\gamma:I\to X$ where $I=[a_0,a_1]$ is 
a closed interval. Then $x_0:=\gamma(a_0)$ and $x_1:=\gamma(a_1)$ are 
called \emph{endpoints} of $\gamma$, and we say that $\gamma$ connects 
$x_0$ to $x_1$.
If $x_0=x_1$ we say that $\gamma$ is \emph{closed}.

The \emph{multiplicity} of $\gamma$ at a point $x\in X$ is the number (possibly equal
to $+\infty$) 
\[
m(\gamma,x):=\#(\gamma^{-1}(x)) 
\, .
\]

The \emph{length} of $\gamma$ is
\[
\Len(\gamma) 
= \Len(\gamma,I) 
:= \sup \bigg\{ \sum_{i=1}^n d \big( \gamma(t_{i-1}),\gamma(t_i) \big) \bigg\}
\]
where the supremum is taken over all $n=1,2,\dots$ and all increasing sequences 
$\{t_0,\dots,t_n\}$ contained in $I$.%
\footnoteb{The length of $\gamma$ is sometimes called \emph{variation} and 
denoted by $\mathrm{Var}(\gamma,I)$; paths with finite length are called 
\emph{rectifiable}.}

The length of $\gamma$ relative to a closed interval $J$ contained in $I$, 
denoted by $\Len(\gamma,J)$, is the length of the restriction of $\gamma$ 
to $J$.
If $\gamma$ has finite length it is sometimes useful to consider 
the \emph{length measure} associated to $\gamma$, namely the (unique) 
positive measure $\mu_\gamma$ on $I$ which satisfies
\[
\mu_\gamma([t_0,t_1]) = \Len(\gamma,[t_0,t_1])
\quad\text{for every $[t_0,t_1]\subset I$.}
\]

We say that $\gamma$ is a \emph{geodesic} if it has finite length
and minimizes the length among all paths 
with the same endpoints.

We say that $\gamma$ has \emph{constant speed} if there exists 
a finite constant $c$ such that
\[
\Len(\gamma,[t_0,t_1]) = c(t_1-t_0)
\quad\text{for every $[t_0,t_1] \subset I$.}
\]

An (orientation preserving) \emph{reparametrization} of 
$\gamma$ is any path $\gamma':I'\to X$ of the form
\[
\gamma' = \gamma\circ\tau
\]
where $\tau:I'\to I$ is an increasing homeomorphism.
\end{parag}

\begin{remark}
\label{s-pathrem}
Here are some elementary (and mostly well-known) facts.

\smallskip
(i)~The length is lower semicontinuous with respect to 
the pointwise convergence of paths. More precisely, 
given a sequence of paths $\gamma_n:I\to X$ which converge
pointwise to $\gamma:I\to X$, it is easy to check that
\[
\Len(\gamma) \le \liminf_{n\to+\infty} \Len(\gamma_n)
\, .
\]

\smallskip
(ii)~Every path $\gamma:I\to X$ with finite length $\ell$, 
which is not constant on any subinterval of $I$,
admits a Lipschitz reparametrization 
$\gamma':[0,1]\to X$ with constant speed~$\ell$, 
namely $\gamma':=\gamma\circ\sigma^{-1}$
where $\sigma:I\to [0,1]$ is the homeomorphism given by 
\[
\sigma(t):=\frac{1}{\ell}\Len(\gamma,[a_0,t])
\quad\text{for every $t\in I=[a_0,a_1]$.}
\]

\smallskip
(iii)~If $\gamma$ is constant on some subinterval 
of $I$ then the function $\sigma$ defined above is continuous, surjective, 
but not injective. However, we can still consider the left-inverse 
$\tau$ defined by
\[
\tau(s):=\min\{t: \, \sigma(t)=s \}
\quad\text{for every $s\in [0,1]$,}
\]
and even though $\tau$ is not continuous, one can check that
$\gamma':=\gamma\circ\tau$ is a continuous path with 
constant speed~$\ell$, and $m(\gamma',x)=m(\gamma,x)$
for all points $x\in X$ except countably many.

\smallskip
(iv)~If $\gamma$ is Lipschitz then 
$\Len(\gamma,J) \le \Lip(\gamma) \, |J|$
for every interval $J$ contained in $I$,
and more generally 
$\mu_\gamma(E) \le \Lip(\gamma) \, |E|$
for every Borel set $E$ contained in $I$.
Thus the length measure $\mu_\gamma$ is absolutely 
continuous with respect to the Lebesgue measure on $I$, 
and more precisely it can be written as $\smash{ \mu_\gamma=\rho\,\Leb^1 }$
with a density $\rho:I\to\R$ such that 
$0 \le \rho \le \Lip(\gamma)$~a.e.

\smallskip
(v)~If $\gamma$ has constant speed $c$ then $\Lip(\gamma)=c$,
$\Len(\gamma) = c\,|I|$ and $\mu_\gamma=c\,\Leb^1$. 
Conversely, it is easy to check that 
if $\Lip(\gamma)\,|I| \le \Len(\gamma) < +\infty$ then 
$\gamma$ has constant speed $c=\Lip(\gamma)=\Len(\gamma)/|I|$.
\end{remark}

\begin{remark}
\label{s-pathrem2}
The following result is worth mentioning,
even though it will not be used in the following:
if $\gamma$ is Lipschitz and $\rho$
is taken as in Remark~\ref{s-pathrem}(iv), then
for a.e.~$t\in I$ there holds
\begin{equation}
\label{e-metricderivative}
\lim_{h\to 0} \frac{d\big( \gamma(t+h),\gamma(t) \big)}{|h|}
=\lim_{h\to 0^+} \frac{\Len\big(\gamma,[t-h,t+h] \big)}{2h}
=\rho(t)
\, .
\end{equation}
The second equality in \eqref{e-metricderivative}
is a straightforward consequence of Lebesgue's differentiation theorem, 
while the first one is not immediate and will not be proved here.
The first limit in \eqref{e-metricderivative}
is called \emph{metric derivative} of $\gamma$ 
(see~\cite{AT}, Definition~4.1.2 and Theorem~4.1.6). 
\end{remark}

%
%
%
%
%
%

We can now state the main results of this section.

\begin{proposition}
\label{s-pathconn}
Let $X$ be a continuum with $\Haus^1(X)<+\infty$, 
and let $x \ne x'$ be points in $X$.
Then $x$ and $x'$ are connected by an injective
geodesic $\gamma:[0,1]\to X$ with constant speed 
and length $\ell\le \Haus^1(X)$.
\end{proposition}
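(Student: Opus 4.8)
The plan is to realize $\gamma$ as a length-minimizing path and to exploit the fact that for a \emph{simple} rectifiable path the length coincides with the one-dimensional Hausdorff measure of the image. I split the argument into four steps: (A) produce at least one rectifiable path from $x$ to $x'$; (B) minimize its length to obtain a geodesic; (C) show the minimizer is injective; (D) read off the length bound. Steps (B)--(D) are soft and use only results already available, so the real work is concentrated in (A).

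For (A) I would work with the intrinsic (length) distance. For $\delta>0$ let $d_\delta(y,y')$ be the infimum of the lengths $\Len(\{z_i\})$ of $\delta$-chains joining $y$ to $y'$ inside $X$ (Subsection~\ref{s-deltachain}); this is finite for each fixed $\delta$ because $X$, being connected, is $\delta$-connected (Lemma~\ref{s-deltaconn}). Since every $\delta$-chain is also a $\delta'$-chain for $\delta'\ge\delta$, the quantity $d_\delta$ increases as $\delta\downarrow 0$, and I set $d_g(y,y'):=\lim_{\delta\to 0}d_\delta(y,y')$. The decisive point, and the main obstacle of the whole proof, is to show that $d_g(x,x')<+\infty$, i.e.\ that the lengths of the cheapest $\delta$-chains do not blow up as $\delta\to 0$. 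This is exactly where finiteness of $\Haus^1(X)$ must enter: I expect to bound the length of a minimal $\delta$-chain by $\Haus^1(X)$ (up to an error vanishing with $\delta$) by associating to its successive steps pairwise essentially disjoint connected subsets of $X$ and invoking $\sum_i\diam\le\sum_i\Haus^1\le\Haus^1(X)$ (Lemma~\ref{s-diam}); the lower bounds on the $\Haus^1$-measure of components furnished by Corollary~\ref{s-concompball} are the natural tool for controlling how a minimal chain must traverse $X$, and equivalently for proving that $X$ is locally connected so that short chains remain short. Granting the uniform bound $d_\delta(x,x')\le M$, I parametrize a near-minimal $\delta_n$-chain ($\delta_n\to 0$) by normalized arc length as maps $[0,1]\to X$ that are equi-Lipschitz up to an error vanishing with $\delta_n$; by compactness of $X$ and Arzel\`a--Ascoli a subsequence converges uniformly to a path $\gamma_0:[0,1]\to X$ with $\gamma_0(0)=x$ and $\gamma_0(1)=x'$, and, by lower semicontinuity of length (Remark~\ref{s-pathrem}(i)), $\Len(\gamma_0)\le d_g(x,x')<+\infty$.

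For (B), any path $\sigma$ from $x$ to $x'$ yields $\delta$-chains by sampling along a fine partition, whence $d_\delta(x,x')\le\Len(\sigma)$ and therefore $d_g(x,x')\le\inf_\sigma\Len(\sigma)$; combined with $\Len(\gamma_0)\le d_g(x,x')$ this shows that $\gamma_0$ is already a geodesic with $\Len(\gamma_0)=d_g(x,x')$. I then reparametrize $\gamma_0$ to constant speed $\ell:=\Len(\gamma_0)$ on $[0,1]$ via Remark~\ref{s-pathrem}(ii)--(iii); the result $\gamma$ is still a geodesic, has constant speed, and (having speed $\ell>0$, since $x\neq x'$) is nowhere locally constant. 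For (C) I argue by contradiction: if $\gamma(s)=\gamma(t)$ with $s<t$, deleting the portion over $(s,t)$ and regluing produces a continuous path from $x$ to $x'$ of length $\ell-\ell(t-s)<\ell$, contradicting minimality; hence $\gamma$ is injective.

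Step (D) gives the bound. For any partition $0=t_0<\dots<t_n=1$ the subarcs $\Gamma_i:=\gamma([t_{i-1},t_i])$ are connected and, by injectivity, meet only at their shared endpoints, so they are $\Haus^1$-essentially disjoint; using that $\Haus^1$ is additive on Borel sets (Remark~\ref{s-remhaus}(i)) and that $\diam\le\Haus^1$ on connected sets (Lemma~\ref{s-diam}),
\[
\Haus^1(\gamma([0,1]))=\sum_i\Haus^1(\Gamma_i)\ge\sum_i\diam(\Gamma_i)\ge\sum_i d\big(\gamma(t_{i-1}),\gamma(t_i)\big).
\]
Taking the supremum over partitions gives $\Haus^1(\gamma([0,1]))\ge\Len(\gamma)=\ell$, while $\gamma([0,1])\subseteq X$ gives $\Haus^1(\gamma([0,1]))\le\Haus^1(X)$; hence $\ell\le\Haus^1(X)$, as required. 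I expect step (A), specifically the uniform length bound on minimal $\delta$-chains (equivalently, local connectedness of a continuum of finite length), to be by far the hardest part, the remaining steps being routine once a single rectifiable path is in hand.
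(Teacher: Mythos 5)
Your architecture is the same as the paper's (near-minimal $\delta$-chains, a compactness argument, geodesy by comparing with sampled chains, injectivity by loop excision, and $\ell\le\Haus^1(X)$ via the identity $\Len=\Haus^1(\mathrm{image})$ for injective paths, which your step (D) in effect reproves and which is the paper's Lemma~\ref{s-lengthformula3}). But the proposal has a genuine gap exactly where you flag it: the uniform bound $d_\delta(x,x')\le M$ is stated as an expectation (``I expect to bound\dots'', ``Granting the uniform bound\dots'') and never proved, and this is precisely the only place where $\Haus^1(X)<+\infty$ enters. Moreover, the mechanism you sketch --- associating to the steps of a minimal chain \emph{pairwise essentially disjoint} connected subsets --- does not materialize as stated: the natural candidate sets (components of small balls centered at the chain points, via Corollary~\ref{s-concompball}) are disjoint only if the chain points are well separated, and nothing forces a generic near-minimal chain to enjoy that a priori. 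The paper's Lemma~\ref{s-chainconn} supplies the missing combinatorial idea: one may assume $\delta<d(x,x')$, take a $\delta$-chain from Lemma~\ref{s-deltaconn}, and \emph{prune} it --- whenever $d(x_i,x_j)\le\delta$ with $|i-j|\ge 2$, delete the intermediate points; by the triangle inequality this keeps a $\delta$-chain without increasing length --- so that $d(x_i,x_j)>\delta$ for $|i-j|\ge 2$. Then the balls $B(x_i,\delta/2)$ have covering multiplicity at most $2$ (not disjointness, which is why the final constant is $4$ rather than $1$), each satisfies $\Haus^1(B(x_i,\delta/2)\cap X)\ge\delta/2$ by Corollary~\ref{s-concompball}, and since each step of the chain has length at most $\delta$ one gets $2\,\Haus^1(X)\ge (n+1)\delta/2\ge\tfrac12\Len(\{x_i\})$, i.e.\ $\Len(\{x_i\})\le 4\,\Haus^1(X)$. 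Without this (or an equivalent) argument your step (A) is an announcement, not a proof.

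A secondary, fixable defect: Arzel\`a--Ascoli does not apply as you invoke it, because a $\delta$-chain is not a path and in a bare metric space you cannot interpolate between consecutive chain points --- path-connectedness of $X$ is exactly what is being established --- so your ``normalized arc-length parametrizations'' are maps defined only on finite subsets of $[0,1]$. The paper circumvents this by encoding each chain as its graph $\Gamma_\delta\subset[0,1]\times X$, with times $t^\delta_i$ proportional to the consecutive distances, and applying Blaschke's selection theorem; since the time-meshes become dense (estimate \eqref{e-pathconn2.2}), the Hausdorff limit $\Gamma$ is the graph of an $\ell$-Lipschitz map on all of $[0,1]$. A diagonal argument over a dense set of times would repair your version, and it also legitimizes your appeal to lower semicontinuity of length, which as written presupposes the same unavailable interpolation. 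Your steps (B)--(D) are sound and match the paper's Steps~6--10; note only that the paper obtains constant speed not by reparametrizing a posteriori but by observing $\Lip(\gamma)\le\ell\le\Len(\gamma)$ and invoking Remark~\ref{s-pathrem}(v), which is slightly cleaner since it avoids discussing intervals of constancy.
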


If $X$ is a subset of $\R^n$, this statement can be found 
for example in \cite{Fa}, Lemma~3.12. 
A slightly more general version of this statement 
(in the metric setting) can be found in 
\cite{AT}, Theorem~4.4.7. 
For the sake of completeness we give a proof below, 
which follows essentially the one in~\cite{Fa}.

\begin{proposition}
\label{s-lengthformula}
Let $\gamma:I\to X$ be a path with finite length. 
Then the multiplicity $m(\gamma,\cdot):X\to[0,+\infty]$ 
is a Borel function and 
\begin{equation}
\label{e-lengthformula}
\Len(\gamma) = \int_X m(\gamma,x) \, d\Haus^1(x)
\, .
\end{equation}
In particular $m(\gamma,x)$ is finite for $\Haus^1$-a.e.~$x\in X$.
\end{proposition}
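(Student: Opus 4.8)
The plan is to prove \eqref{e-lengthformula} by a metric-space version of the Banach indicatrix theorem: I approximate the multiplicity from below by counting, for finer and finer partitions of the domain, how many subintervals have image containing a given point. Concretely, write $I=[a_0,a_1]$ and for each $n$ let $t^n_0<\dots<t^n_{2^n}$ be the dyadic partition of $I$ into $2^n$ equal subintervals $I_{n,k}:=[t^n_{k-1},t^n_k]$. Each image $\gamma(I_{n,k})$ is a continuum (the continuous image of a compact interval), and I set
\[
N_n(x) := \sum_{k} \chi_{\gamma(I_{n,k})}(x)
\, ,
\]
the number of subintervals of the $n$-th partition whose image contains $x$. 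Each $N_n$ is Borel, since every $\gamma(I_{n,k})$ is compact. The first key observation is that $\chi_{\gamma(I)} \le \chi_{\gamma(I')}+\chi_{\gamma(I'')}$ whenever $I=I'\cup I''$, so the sequence $\{N_n\}$ is non-decreasing under dyadic refinement and thus converges pointwise to a Borel limit $N_\infty$.

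Next I would identify $N_\infty$ with the multiplicity. If $x$ is not the $\gamma$-image of a dyadic point and $\gamma^{-1}(x)$ is finite, then for $n$ large its finitely many points lie in distinct subintervals $I_{n,k}$, none of them at an endpoint, so $N_n(x)=\#\gamma^{-1}(x)$; and if $\gamma^{-1}(x)$ is infinite then $N_n(x)\to+\infty$. Hence $N_\infty(x)=m(\gamma,x)$ for every $x$ outside the set $D:=\gamma(\{\text{dyadic points}\})$, which is countable and therefore $\Haus^1$-negligible. In particular $m(\gamma,\cdot)$ coincides with the Borel function $N_\infty$ off a countable set, so it is itself Borel, and by the monotone convergence theorem
\[
\int_X m(\gamma,x)\,d\Haus^1(x)
= \int_X N_\infty \,d\Haus^1
= \lim_{n\to\infty}\int_X N_n\,d\Haus^1
= \lim_{n\to\infty}\sum_k \Haus^1\big(\gamma(I_{n,k})\big)
\, .
\]

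Finally I would show that this last limit equals $\Len(\gamma)$ by squeezing. On one hand $\Haus^1(\gamma(I_{n,k}))\le\Len(\gamma,I_{n,k})$ (parametrize $\gamma(I_{n,k})$ by an interval of length $\Len(\gamma,I_{n,k})$ with a $1$-Lipschitz map and apply Remark~\ref{s-remhaus}(ii)), so $\sum_k \Haus^1(\gamma(I_{n,k}))\le\sum_k\Len(\gamma,I_{n,k})=\Len(\gamma)$ by additivity of length. On the other hand, since $\gamma(I_{n,k})$ is connected, Lemma~\ref{s-diam} gives $\Haus^1(\gamma(I_{n,k}))\ge\diam(\gamma(I_{n,k}))\ge d(\gamma(t^n_{k-1}),\gamma(t^n_k))$, and the sums $\sum_k d(\gamma(t^n_{k-1}),\gamma(t^n_k))$ increase to $\Len(\gamma)$ as the mesh tends to $0$ (a standard property of the length of a continuous path). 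The two bounds force $\sum_k \Haus^1(\gamma(I_{n,k}))\to\Len(\gamma)$, which together with the previous display yields \eqref{e-lengthformula}; the finiteness of $m(\gamma,\cdot)$ for $\Haus^1$-a.e.\ $x$ is then immediate, as $\Len(\gamma)<+\infty$.

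The main obstacle is the bookkeeping at the partition nodes: a point on the image of a dyadic node is counted in two adjacent subintervals, so $N_n(x)$ may overshoot $m(\gamma,x)$. The remedy is to discard the countable, hence $\Haus^1$-null, set $D$ of such points, on which the identity $N_\infty=m(\gamma,\cdot)$ may fail but which affects neither the integral nor the Borel measurability. A minor preliminary simplification, if desired, is to replace $\gamma$ by a constant-speed reparametrization (Remark~\ref{s-pathrem}(ii)--(iii)), which changes neither side of \eqref{e-lengthformula}; this is convenient but not essential for the argument.
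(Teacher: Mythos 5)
Your proof is correct, and it takes a genuinely different route from the paper's. The paper argues by reduction: it first proves the identity for injective paths (Lemma~\ref{s-lengthformula3}), then for piecewise regular paths, and treats the general case by approximating $\gamma$ with piecewise regular paths (Lemma~\ref{s-pathapprox}), whose construction relies on the existence of injective paths in continua of finite length (Proposition~\ref{s-pathconn}); the limit passage uses the semicontinuity of length together with the monotonicity of the multiplicities and Fatou's lemma. Your argument is instead a metric-space version of the Banach indicatrix theorem: the counting functions $N_n$ over dyadic partitions increase to $m(\gamma,\cdot)$ off the countable, hence $\Haus^1$-null, set $D$ of images of dyadic nodes (correctly so, since for $x\notin D$ every subinterval whose image contains $x$ must contain a non-dyadic preimage point, and for large $n$ distinct preimage points fall in distinct subintervals); monotone convergence turns the integral into $\lim_n \sum_k \Haus^1(\gamma(I_{n,k}))$, and this limit is squeezed between the inscribed sums $\sum_k d\big(\gamma(t^n_{k-1}),\gamma(t^n_k)\big)$, which tend to $\Len(\gamma)$ by the continuity of $\gamma$, and $\Len(\gamma)$ itself, using Lemma~\ref{s-diam} on one side and $\Haus^1(\gamma(J))\le\Len(\gamma,J)$ on the other (for the latter you could simply cite Lemma~\ref{s-lengthformula2} rather than re-derive it via the arclength reparametrization). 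What your approach buys is logical economy: it completely decouples Proposition~\ref{s-lengthformula} from Proposition~\ref{s-pathconn} and the approximation machinery, needing only Lemmas~\ref{s-diam} and~\ref{s-lengthformula2}, and it delivers Borel measurability directly, since $m(\gamma,\cdot)$ agrees with the pointwise limit $N_\infty$ off a countable set and singletons are Borel. What the paper's route buys is shared infrastructure: the injective geodesics of Proposition~\ref{s-pathconn} and the regularization of Lemma~\ref{s-pathapprox} are needed anyway for Theorem~\ref{s-canopara}, so within the paper the detour costs little.
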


\begin{remark}
(i)~Formula~\eqref{e-lengthformula} can be viewed as the one-dimensional 
area formula in the metric setting, in particular if coupled with the 
existence of the metric derivative, see Remark~\ref{s-pathrem2}.

\smallskip
(ii)~Formula~\eqref{e-lengthformula} can easily re-written in  
local form: for every Borel function $f:X\to [0,+\infty]$ there holds
\begin{equation}
\label{e-lengthformula2}
\int_I f(\gamma(t)) \, d\mu_\gamma(t) 
= \int_X f(x) \, m(\gamma,x) \, d\Haus^1(x)
\, .
\end{equation}
This means that the push-forward of the length measure 
$\mu_\gamma$ according to the map $\gamma$ agrees with the 
measure $\Haus^1$ on $X$ multiplied by the density $m(\gamma,\cdot)$;
in short $\gamma_\#(\mu_\gamma)=m(\gamma,\cdot) \, \Haus^1$.
\end{remark}

\bigskip
The rest of this section is devoted to the proofs of  
Propositions~\ref{s-pathconn} and \ref{s-lengthformula}.
We begin with some preliminary lemmas.

\begin{lemma}
\label{s-chainconn}
Take $X, x,x'$ as in Proposition~\ref{s-pathconn}.
Then, for every $\delta>0$, $x$ and $x'$ are connected by a 
$\delta$-chain $\{x_i: \, i=0,\dots, n\}$  (see Subsection~\ref{s-deltachain}) 
such that 
\begin{equation}
\label{e-chainconn}
\Len(\{x_i\}) \le 4\,\Haus^1(X)
\, .
\end{equation}
\end{lemma}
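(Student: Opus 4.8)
The plan is to bound the length of a suitably chosen $\delta$-chain by a packing argument: I select the chain so that its points are well separated, put pairwise disjoint balls around every second point, and observe that each such ball captures a definite amount of the $\Haus^1$-measure of $X$, which limits the number of points and hence the length.

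First I would dispose of the trivial case. If $d(x,x')\le\delta$ then $\{x,x'\}$ is itself a $\delta$-chain, and by Lemma~\ref{s-diam} its length is $d(x,x')\le\diam(X)\le\Haus^1(X)\le 4\,\Haus^1(X)$. So I may assume $d(x,x')>\delta$, which in particular forces $\diam(X)>\delta$. Since $X$ is connected it is $\delta$-connected (Lemma~\ref{s-deltaconn}), so at least one $\delta$-chain joining $x$ to $x'$ exists; among all of them I would select one, $\{x_0,\dots,x_n\}$, with the \emph{least number of points}. The decisive consequence of this minimality is a separation estimate: whenever $j\ge i+2$ one must have $d(x_i,x_j)>\delta$, for otherwise deleting $x_{i+1},\dots,x_{j-1}$ and inserting the single admissible hop $x_i\to x_j$ would produce a $\delta$-chain with fewer points. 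In particular any two even-indexed points lie at distance $>\delta$.

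Then I would run the packing argument on the even-indexed points. The closed balls $B(x_{2k},\delta/2)$ have centres pairwise more than $\delta$ apart, hence are pairwise disjoint. Since $X$ is compact each $X\cap B(x_{2k},\delta/2)$ is compact, and since $\diam(X)>\delta$ we have $X\setminus B(x_{2k},\delta/2)\ne\varnothing$; thus Corollary~\ref{s-concompball} applies and yields $\Haus^1\big(X\cap B(x_{2k},\delta/2)\big)\ge\delta/2$. Summing over the (at least $n/2$) even indices and using disjointness gives $\Haus^1(X)\ge\tfrac{n}{2}\cdot\tfrac{\delta}{2}=\tfrac{n\delta}{4}$. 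As each of the $n$ hops has length at most $\delta$, the chain has length at most $n\delta\le 4\,\Haus^1(X)$, which is exactly \eqref{e-chainconn}.

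The only genuinely delicate step is the reduction that makes the balls disjoint: minimizing the \emph{number} of points (rather than the length) simultaneously guarantees that an optimal chain exists — the point count is a positive integer — and forces the separation $d(x_i,x_j)>\delta$ for $|i-j|\ge2$. Everything else is bookkeeping, provided one remembers to treat the case $d(x,x')\le\delta$ separately, since the hypothesis $X\setminus B\ne\varnothing$ needed to invoke Corollary~\ref{s-concompball} is precisely what fails when $\diam(X)\le\delta$.
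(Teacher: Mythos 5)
Your proof is correct and follows essentially the same route as the paper's: dispose of the case $d(x,x')\le\delta$ via Lemma~\ref{s-diam}, prune the chain to get the separation $d(x_i,x_j)>\delta$ for $|i-j|\ge 2$, and then pack balls of radius $\delta/2$ whose $\Haus^1$-content is bounded below by Corollary~\ref{s-concompball}. The only (cosmetic) difference is that you keep just the even-indexed balls, which are pairwise disjoint, whereas the paper uses all the balls $B(x_i,\delta/2)$ and the observation that each point lies in at most two of them; both bookkeepings yield the same constant $4$.
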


\begin{proof}
We can assume $\delta<d(x,x')$, otherwise
it suffices to take the $\delta$-chain consisting
just of the points $x,x'$ and use Lemma~\ref{s-diam}
to obtain~\eqref{e-chainconn}.

By Lemma~\ref{s-deltaconn},
$x$ and $x'$ are connected a $\delta$-chain
$\{x_i\}$, and 
possibly removing some points from the chain, 
we can further assume that
\begin{equation}
\label{e-pathconn1.1}
d(x_i,x_j)>\delta
\quad\text{if $|j-i|\ge 2$.}
\end{equation}
Consider now the balls $B_i:=B(x_i,\delta/2)$ with $i=0,\dots,n$, 
and note that
$\Haus^1(B_i)\ge\delta/2$ by Corollary~\ref{s-concompball}, 
while \eqref{e-pathconn1.1} implies that $B_i$ and $B_j$
do not intersect if $|j-i|\ge 2$, which means that
every point in $X$ belongs to at most two balls 
in the family $\{B_i\}$. 
Using these facts and the estimate $\delta \ge d(x_{i-1},x_i)$
we obtain
\[
2\, \Haus^1(X) 
\ge \sum_{i=0}^n \Haus^1(B_i)
\ge \sum_{i=0}^n \frac{\delta}{2}
\ge \frac{1}{2} \, \Len(\{x_i\})
\, .
\qedhere
\]
\end{proof}

\begin{lemma}
\label{s-lengthformula2}
For every path $\gamma:I\to X$ there holds
$\Haus^1(\gamma(I)) \le \Len(\gamma)$.
\end{lemma}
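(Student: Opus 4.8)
The plan is to reduce to the case of a Lipschitz, constant-speed path and then invoke the contraction property of $\Haus^1$ under Lipschitz maps. If $\Len(\gamma)=+\infty$ there is nothing to prove, so I assume $\ell:=\Len(\gamma)<+\infty$. The crucial observation is that the image $\gamma(I)$ is unchanged under reparametrization, so I may replace $\gamma$ by the constant-speed reparametrization $\gamma':[0,1]\to X$ furnished by Remark~\ref{s-pathrem}(ii) — or by Remark~\ref{s-pathrem}(iii) when $\gamma$ is constant on some subinterval. This $\gamma'$ is Lipschitz with $\Lip(\gamma')=\ell$ by Remark~\ref{s-pathrem}(v), and satisfies $\gamma'([0,1])=\gamma(I)$.

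Granting the identity of images, the conclusion is immediate: applying Remark~\ref{s-remhaus}(ii) to the Lipschitz map $\gamma'$, together with $\Haus^1([0,1])=\Leb^1([0,1])=1$ (Remark~\ref{s-remhaus}(i)), gives
\[
\Haus^1(\gamma(I)) = \Haus^1(\gamma'([0,1])) \le \Lip(\gamma')\,\Haus^1([0,1]) = \ell = \Len(\gamma)\,.
\]
The one delicate point — the main obstacle — is the identity $\gamma'([0,1])=\gamma(I)$ in the degenerate case where $\gamma$ is constant on subintervals, so that the function $\sigma$ of Remark~\ref{s-pathrem}(iii) is not injective. Here I would check that whenever $\sigma$ is constant on an interval $[\tau(s),t^\ast]$, the path $\gamma$ is itself constant there (that interval carries zero length), whence $\gamma(t^\ast)=\gamma'(s)$; this yields $\gamma(I)\subseteq\gamma'([0,1])$, and the reverse inclusion is trivial since $\gamma'=\gamma\circ\tau$. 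I expect this bookkeeping to be the only nontrivial ingredient.

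Alternatively — and perhaps more in keeping with the elementary spirit of the paper — one can argue directly, without reparametrizing. For any closed subinterval $J\subseteq I$ one has $\diam(\gamma(J))\le\Len(\gamma,J)$, since the distance between any two values of $\gamma$ on $J$ is bounded by the length over $J$. Given $\delta>0$, the continuity of the length function $t\mapsto\Len(\gamma,[a_0,t])$ (valid for finite-length paths) together with the compactness of $I$ lets me choose a partition $a_0=t_0<\cdots<t_n=a_1$ with $\Len(\gamma,[t_{i-1},t_i])\le\delta$ for every $i$. The sets $\gamma([t_{i-1},t_i])$ then cover $\gamma(I)$, have diameters at most $\delta$, and by additivity of length
\[
\Haus^1_\delta(\gamma(I)) \le \sum_i \diam(\gamma([t_{i-1},t_i])) \le \sum_i \Len(\gamma,[t_{i-1},t_i]) = \Len(\gamma)\,;
\]
letting $\delta\to 0$ yields the claim.
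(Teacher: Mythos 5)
Both of your arguments are correct, and your second (``alternative'') argument is essentially the paper's own proof: the paper also reduces to bounding $\Haus^1_\delta(\gamma(I))$, covers the image by sets $\gamma(I_i)$ coming from a finite partition of $I$, and uses $\diam(\gamma(I_i))\le\Len(\gamma,I_i)$ together with additivity of length. The only difference there is how the partition is chosen: the paper picks the $I_i$ so that $\diam(\gamma(I_i))\le\delta$ directly, using only the (uniform) continuity of $\gamma$ on the compact interval $I$, whereas you pick them so that $\Len(\gamma,I_i)\le\delta$, which requires the continuity of the length function $t\mapsto\Len(\gamma,[a_0,t])$ --- a true and standard fact for finite-length paths (it underlies Remark~\ref{s-pathrem}(ii)), but slightly heavier than what is needed, and your appeal to compactness is superfluous once that continuity is granted (monotonicity and the intermediate value property already produce the partition). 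Your first route, via constant-speed reparametrization and the contraction property $\Haus^1(f(E))\le\Lip(f)\,\Haus^1(E)$ of Remark~\ref{s-remhaus}(ii), is genuinely different from the paper and also sound: the degenerate case is exactly as you describe, since $\sigma$ constant on $[\tau(s),t^\ast]$ means $\Len(\gamma,[\tau(s),t^\ast])=0$, hence $\gamma$ is constant there and $\gamma(t^\ast)=\gamma(\tau(s))=\gamma'(s)$, giving $\gamma(I)=\gamma'([0,1])$ (you should also dispose of the trivial case $\ell=0$, where $\sigma$ is undefined but $\gamma$ is constant). What each approach buys: yours transfers the work to the reparametrization machinery of Remark~\ref{s-pathrem}(ii)--(v), which is elegant but circular-looking in context, since Lemma~\ref{s-lengthformula2} is among the most elementary statements of Section~\ref{s3} and the paper's covering argument needs nothing beyond the definition of $\Haus^1_\delta$; on the other hand, your route makes transparent that the lemma is just the Lipschitz contraction property in disguise.
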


\begin{proof}
It suffices to show that $\Haus^1_\delta(\gamma(I)) \le \Len(\gamma)$
for every $\delta>0$ (cf.~Subsection~\ref{s-haus}).
Using the continuity of $\gamma$, we partition $I$ into finitely many
closed intervals $I_i$ with disjoint interiors so that
\[
\diam(\gamma(I_i)) \le \delta
\quad\text{for every $i$.}
\]
Using the definition of $\Haus^1_\delta$ and the fact that
$\diam(\gamma(I_i)) \le \Len(\gamma, I_i)$, we obtain
\[
\Haus^1_\delta(\gamma(I)) 
\le \sum_i \diam(\gamma(I_i)) 
\le \sum_i \Len(\gamma, I_i)
=\Len(\gamma)
\, .
\qedhere
\]
\end{proof}

\begin{lemma}
\label{s-lengthformula3}
If $\gamma:I\to X$ is an injective path then
$\Len(\gamma) = \Haus^1(\gamma(I))$.
\end{lemma}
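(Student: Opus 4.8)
The plan is to pair this with Lemma~\ref{s-lengthformula2}, which already gives the inequality $\Haus^1(\gamma(I)) \le \Len(\gamma)$ for \emph{every} path. Thus the only thing left is the reverse inequality $\Len(\gamma) \le \Haus^1(\gamma(I))$, and this is exactly where injectivity will be used. To produce it I would argue directly from the definition of length as a supremum over partitions, turning each partition into a disjoint family of connected pieces whose diameters dominate the corresponding chords.

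Concretely, I would fix an arbitrary increasing sequence $a_0 = t_0 < t_1 < \cdots < t_n = a_1$ in $I = [a_0,a_1]$ and set $E_i := \gamma((t_{i-1},t_i))$. I would then record three facts. First, each $E_i$ is connected, being the continuous image of an interval. Second, the $E_i$ are pairwise disjoint, precisely because $\gamma$ is injective. Third, each $E_i$ is Borel, since $E_i = \gamma([t_{i-1},t_i]) \setminus \{\gamma(t_{i-1}),\gamma(t_i)\}$ is a compact set with two points removed.

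With these in hand I would use that $\Haus^1$ is $\sigma$-additive on disjoint Borel sets (Remark~\ref{s-remhaus}(i)), together with monotonicity (the $E_i$ are disjoint subsets of $\gamma(I)$) and Lemma~\ref{s-diam}, to write
\[
\Haus^1(\gamma(I)) \;\ge\; \sum_i \Haus^1(E_i) \;\ge\; \sum_i \diam(E_i)
\;\ge\; \sum_i d\big(\gamma(t_{i-1}),\gamma(t_i)\big).
\]
The last step rests on the estimate $\diam(E_i) \ge d(\gamma(t_{i-1}),\gamma(t_i))$: by continuity of $\gamma$ the closure of the open arc $E_i$ is $\gamma([t_{i-1},t_i])$, whose diameter equals that of $E_i$ and which contains both endpoints $\gamma(t_{i-1})$ and $\gamma(t_i)$. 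Taking the supremum over all partitions then yields $\Haus^1(\gamma(I)) \ge \Len(\gamma)$, and combined with Lemma~\ref{s-lengthformula2} this gives the equality.

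The one genuinely delicate point is the verification that the pieces $E_i$ form a \emph{disjoint} family of Borel sets, so that $\sigma$-additivity applies; this is the sole place where injectivity enters, and it is essential, since for a non-injective path the subarcs may overlap and the additivity step collapses (matching the fact that Lemma~\ref{s-lengthformula2} gives only an inequality in general). Every other step is routine.
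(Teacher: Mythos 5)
Your proof is correct and follows essentially the same route as the paper's: reduce to the inequality $\Len(\gamma)\le\Haus^1(\gamma(I))$ via Lemma~\ref{s-lengthformula2}, then bound each chord $d(\gamma(t_{i-1}),\gamma(t_i))$ by the diameter of the corresponding subarc and sum, using Lemma~\ref{s-diam}, injectivity for disjointness, and additivity of $\Haus^1$. The only (harmless) difference is that you take the open subarcs $\gamma((t_{i-1},t_i))$, which are genuinely disjoint, whereas the paper uses the closed arcs $\gamma([t_{i-1},t_i])$, whose pairwise intersections contain at most one point and are thus $\Haus^1$-null, so the additivity step goes through identically.
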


\begin{proof}
By Lemma~\ref{s-lengthformula2} and the definition of
length it suffices to show that for every increasing sequence
$\{t_0,\dots,t_n\}$ in $I$ there holds
\[
\sum_{i=1}^n d \big(\gamma(t_{i-1}),\gamma(t_i) \big)
\le \Haus^1(\gamma(I))
\, .
\]
Since the sets $E_i:=\gamma([t_{i-1},t_i])$ are
connected, then
$\diam(E_i) \le \Haus^1(E_i)$ (Lemma~\ref{s-diam}),
and since $\gamma$ is injective, 
the intersection $E_i\cap E_j$ contains at most one point
for every $i\ne j$, and in particular is $\Haus^1$-null. 
Hence
\[
\sum_{i=1}^n d \big(\gamma(t_{i-1}),\gamma(t_i) \big)
\le \sum_{i=1}^n \diam(E_i) 
\le \sum_{i=1}^n \Haus^1(E_i) 
\le \Haus^1(\gamma(I))
\, .
\qedhere
\]
\end{proof}

\begin{proof}[Proof of Proposition~\ref{s-pathconn}]
The idea is simple: for every $\delta>0$ 
we take the (almost) shortest $\delta$-chain 
$\smash{ \{x^\delta_i\} }$ that connects $x$ and $x'$, 
and consider the set $\Gamma_\delta$ of all couples
$\smash{ (t^\delta_i,x^\delta_i) } \in [0,1]\times X$ 
with suitably chosen times $\smash{ t^\delta_i }\in [0,1]$. 
Passing to a subsequence, we can assume that the 
compact sets $\Gamma_\delta$ converge in the Hausdorff distance
to some limit set $\Gamma$ as $\delta\to 0$; 
we then show that $\Gamma$ is the graph of a
path $\gamma:[0,1]\to X$ with the desired properties.

\smallskip
We set $I:=[0,1]$ and $L:=\Haus^1(X)$.
The proof is divided in several steps.

\medskip
\emph{Step~1: construction of the $\delta$-chain $\{x^\delta_i\}$.}
Fix $\delta>0$, and let $\F_\delta$ be the class of 
all $\delta$-chains with initial point $x$ and final 
point $x'$, and let $L_\delta$ be the infimum of the length 
over all $\delta$-chains in~$\F_\delta$.
By Lemma~\ref{s-chainconn} we know that $\F_\delta$ 
is not empty and $L_\delta \le 4L$.

We then choose a $\delta$-chain $\smash{ \{x^\delta_i: \, i=0,\dots,n_\delta\} }$ 
in $\F_\delta$ whose length $\ell_\delta$ satisfies
\begin{equation}
\label{e-pathconn2.0}
L_\delta \le \ell_\delta \le L_\delta+\delta \le 4L+\delta
\, .
\end{equation}

\medskip
\emph{Step~2: construction of the set $\Gamma_\delta$.}
Fix $\delta>0$ and let $\smash{ \{x^\delta_i: \, i=0,\dots,n_\delta\} }$
be the $\delta$-chain chosen in Step~1.
We can clearly assume that the points $\smash{x^\delta_i}$ 
are all different, and find an increasing sequence 
of numbers $\smash{ t^\delta_i }$ with $\smash{ i=0,\dots,n_\delta }$ 
such that the first one is $0$, the last one is $1$, and the differences
$\smash{ t^\delta_i-t^\delta_{i-1} }$ are proportional to the distances 
$\smash{ d(x^\delta_{i-1},x^\delta_i) }$. 
This means that
\begin{equation}
\label{e-pathconn2.1}
\frac{ d(x^\delta_{i-1},x^\delta_i) }{ t^\delta_i-t^\delta_{i-1} } 
= \ell_\delta
\end{equation}
for every $i=1,\dots,n_\delta$,
and in particular we have that 
\begin{equation}
\label{e-pathconn2.2}
t^\delta_i-t^\delta_{i-1} 
= \frac{ d(x^\delta_{i-1},x^\delta_i) }{ \ell_\delta }
\le \frac{\delta}{\ell_\delta} 
\le \frac{\delta}{d(x,x')}
\, .
\end{equation}
Finally, we set 
\[
\Gamma_\delta:= \big\{ (t^\delta_i,x^\delta_i) : \, i=0,\dots,n_\delta \big\}
\, .
\]

\medskip
\emph{Step~3: construction of the set $\Gamma$.}
The sets $\Gamma_\delta$ defined in Step~2 are contained in the compact 
metric space $[0,1]\times X$, and by Blaschke's selection 
theorem (see for example \cite{AT}, Theorem~4.4.15) we have that, 
possibly passing to a subsequence, 
they converge in the Hausdorff distance as $\delta\to 0$ 
to some compact set $\Gamma$ contained 
in $[0,1]\times X$.

\medskip
\emph{Step~4: $\Gamma$ is the graph of a Lipschitz path $\gamma:I\to X$.}
Formula \eqref{e-pathconn2.1} implies 
that each $\Gamma_\delta$ is the graph of a map $\gamma_\delta$ 
from a subset of $I$ to $X$ with Lipschitz constant~$\ell_\delta$.  
This immediately implies that $\Gamma$ is the graph 
of a Lipschitz map $\gamma$ from a subset of $I$ to $X$ with
$\Lip(\gamma) \le \ell$ where (recall~\eqref{e-pathconn2.0})
\begin{equation}
\label{e-pathconn2.3}
\ell
:=\liminf_{\delta\to 0} \ell_\delta 
= \liminf_{\delta\to 0} L_\delta
\le 4L
\, .
\end{equation}
Moreover the projection of $\Gamma_\delta$ on $I$ is the set
$I_\delta:=\{ \smash{t^\delta_i} \}$, and taking 
into account estimate \eqref{e-pathconn2.2} and the fact 
that $I_\delta$ contains $0$ and $1$, 
we get that $I_\delta$ converges to $I$ in the Hausdorff distance
as $\delta\to0$. This implies that the projection of $\Gamma$ on $I$ 
is $I$ itself, which means that the domain of $\gamma$ is $I$.

\medskip
\emph{Step~5: $\gamma$ connects $x$ and $x'$.}
Since $\smash{ x^\delta_0=x }$ and $\smash{ x^\delta_{n_\delta}=x' }$
for every $\delta>0$, each $\Gamma_\delta$ contains the points 
$(0,x)$ and $(1,x')$, and therefore so does $\Gamma$, which means 
that $\gamma(0)=x$ and $\gamma(1)=x'$.

\medskip
\emph{Step~6: $\ell \le \Len(\gamma)$.}
For every $\delta>0$ we can extract from the image of $\gamma$ a 
$\delta$-chain that connects $x$ and $x'$
and has length at most $\Len(\gamma)$. This implies that
$\ell_\delta \le \Len(\gamma)$ (cf.~Step~1), and using 
\eqref{e-pathconn2.3} we obtain the claim. 

\medskip
\emph{Step~7: $\gamma$ has constant speed, and $\Len(\gamma)=\ell$.}
By Step~4 and Step~6 we have that $\Lip(\gamma) \le \ell \le \Len(\gamma)$. 
Then the claim follows from Remark~\ref{s-pathrem}(v).

\medskip
\emph{Step~8: $\gamma$ is a geodesic.}
Let $\gamma'$ be any path connecting $x$ and $x'$.
Arguing as in Step~6 we obtain $\ell \le \Len(\gamma')$, 
which implies $\Len(\gamma) \le \Len(\gamma')$ by Step~7.

\medskip
\emph{Step~9: $\gamma$ is a injective.}
Assume by contradiction that there exists $t_0\in I$ and $s>0$ such that 
$\gamma(t_0)=\gamma(t_0+s)$. 
Then the path $\gamma':[0,1-s]\to X$ 
defined by
\[
\gamma'(t):=
\begin{cases}
  \gamma(t) & \text{if $0 \le t\le t_0$,} \\
  \gamma(t+s) & \text{if $t_0 < t\le 1-s$.} \\
\end{cases}
\]
is well-defined, connects $x$ and $x'$, and has length $\ell(1-s)$,
which is strictly smaller than the length of $\gamma$, 
contrary to the fact that $\gamma$ is a geodesic. 

\medskip
\emph{Step~10: $\Len(\gamma) \le \Haus^1(X)$.} 
Apply Lemma~\ref{s-lengthformula3}.
\end{proof}

We pass now to the proof of Proposition~\ref{s-lengthformula}.

\begin{parag}[Piecewise regular paths]
\label{s-piecewisereg}
Let $I$ be a closed interval.
We say that a finite family 
$\{I_i\}$ is a \emph{partition} of $I$ 
if the $I_i$ are closed intervals contained in $I$, 
have pairwise disjoint interiors, and cover $I$,
and we say that a path $\gamma:I\to X$ is \emph{piecewise regular}
on the partition $\{I_i\}$ 
if it is either constant or injective on each $I_i$.
\end{parag}

\begin{lemma}
\label{s-pathapprox}
Let $\gamma:I\to X$ be a path with finite length, 
and let $\{I_i\}$ be a partition of $I$.
Then there exists a path $\gamma' : I \to X$ 
such that:
\begin{itemizeb}
\item[(i)]
$\gamma'$ is piecewise regular on the partition $\{I_i\}$;
\item[(ii)]
$\gamma'$ agrees with $\gamma$ at the endpoints of each $I_i$
and $\gamma'(I_i)\subset\gamma(I_i)$;
\item[(iii)]
$\Len(\gamma',I_i) 
= \Haus^1(\gamma'(I_i)) 
\le \Haus^1(\gamma(I_i)) 
\le \Len(\gamma,I_i)$ for every $i$.
\end{itemizeb}
\end{lemma}

\begin{proof}
We define $\gamma'$ on each interval $I_i=[a_i,a_i']$ as follows: 
\begin{itemizeb}
\item
if $\gamma(a_i)=\gamma(a'_i)$ we let
$\gamma'$ be the constant path $\gamma(a_i)$;
\item
if $\gamma(a_i)\ne\gamma(a'_i)$, we let $\gamma'$ be
any injective path from $I_i$ to $X':=\gamma(I_i)$ 
which connects $\gamma(a_i)$ to $\gamma(a'_i)$
(such path exists because $X'$ has finite length, 
cf.~Proposition~\ref{s-pathconn}).
\end{itemizeb}
The path $\gamma'$ satisfies statements~(i) and (ii)
by construction, while (iii) follows from
Lemmas~\ref{s-lengthformula2} and~\ref{s-lengthformula3}.
\end{proof}

\begin{proof}[Proof of Proposition~\ref{s-lengthformula}]
The proof is divided in three cases.

\medskip
\emph{Case~1: $\gamma$ is injective.}
In this case the multiplicity $m(\gamma,\cdot)$ is 
the characteristic function of 
the compact set $\gamma(I)$, and therefore is Borel, 
while identity \eqref{e-lengthformula} follows from 
Lemma~\ref{s-lengthformula3}.

\medskip
\emph{Case~2: $\gamma$ is piecewise regular.}
We easily reduce to the previous case. 

\medskip
\emph{The general case.}
We choose a sequence of 
piecewise regular paths $\gamma_n:I\to X$ 
that approximate $\gamma$ in the following sense:
\begin{itemizeb}
\item[(a)]
$\gamma_n$ converge to $\gamma$ uniformly;
\item[(b)]
$\Len(\gamma_n) \le \Len(\gamma)$ for every $n$; 
\item[(c)] 
$m(\gamma_n,x) \le m(\gamma,x)$ for every $x\in X$
and every $n$;
\item[(d)] 
$m(\gamma_n,x) \to m(\gamma,x)$ as $n\to+\infty$
for every $x\in X$.
\end{itemizeb}
More precisely, we construct $\gamma_n$ as follows:
for every $n$ we choose a partition $\{I^n_i\}$ of $I$
so that 
\begin{equation}
\label{e-finezza}
\max_i \diam(I^n_i) \to 0
\quad\text{as $n\to+\infty$.}
\end{equation}
and then take $\gamma_n$ according to Lemma~\ref{s-pathapprox}.
Then statements~(a), (b), (c) and (d) can be readily derived 
from \eqref{e-finezza} and statements~(ii) and (iii) in 
Lemma~\ref{s-pathapprox}.

\smallskip
We can now prove that the multiplicity $m(\gamma,\cdot)$ 
is Borel and \eqref{e-lengthformula} holds.
The first part of this claim follows by the fact that
$m(\gamma,\cdot)$ agrees with the pointwise limit of the 
multiplicities $m(\gamma_n,\cdot)$ (statement~(d)), which are 
Borel measurable because the paths $\gamma_n$ fall into Case~2.
To prove \eqref{e-lengthformula}, note that 
statements~(a) and (b) and the semicontinuity of length 
imply that
\begin{itemizeb}
\item[(e)]
$\Len(\gamma_n) \to \Len(\gamma)$ as $n\to+\infty$, 
\end{itemizeb}
while statements~(c) and (d) and Fatou's lemma yield
\begin{itemizeb}
\item[(f)]
$\int_X m(\gamma_n,\cdot) \, d\Haus^1 \to \int_X m(\gamma,\cdot) \, d\Haus^1$
as $n\to+\infty$.
\end{itemizeb}
We already know that \eqref{e-lengthformula}
holds for each $\gamma_n$, 
and using statements~(e) and (f)
we can pass to the limit (as $n\to+\infty$) 
and obtain that \eqref{e-lengthformula} holds
for $\gamma$ as well.
\end{proof}

\section{Parametrizations of continua with finite length}
\label{s4}
In this section we address the following question: can we parametrize
a continuum $X$ by a single path $\gamma:I\to X$, and if yes, 
what can we require about $\gamma$?

First of all, note that in general a continuum cannot be 
parametrized by a one-to-one path, and not even by a path
with multiplicity equal to $1$ at almost every point
(take for example any network with a triple junction).%
\footnoteb{By network we mean here a \emph{connected} union of finitely many 
arcs (that is, images of injective Lipschitz paths) which intersect 
at most at the endpoints; a point which agrees with $n$ endpoints, $n\ge 3$, 
is called an $n$-junction.}
On the other hand, it is easy to see that every network can be 
parametrized by a closed path that goes through every
arc in the network twice, once in a direction and once 
in the opposite direction.

In Theorem~\ref{s-canopara} we show that 
something similar holds for every continuum $X$ with finite length, 
and more precisely there exists a closed path that \emph{goes through almost 
every point of $X$ twice, once in a direction and once in the opposite direction}.

Before stating the result, we must give a precise formulation of 
the requirement in italic. 
If $X$ is a network made of regular arcs of class $C^1$ in $\R^n$, 
we simply ask that $\gamma$ has multiplicity equal to $2$ and 
\emph{degree} equal to $0$ 
at every point of $X$ except junctions. The problem in extending this
condition to general continua is that the usual definition 
of degree cannot be easily adapted to the metric setting.
To get around this issue, in Subsection~\ref{s-degzero} we introduce 
a suitable weaker notion of path with degree zero.

\medskip
Unless further specification is made, 
in the following $X$ is a metric space.

\begin{parag}[Paths with degree zero]
\label{s-degzero}
Given a Lipschitz path $\gamma:I\to X$, a locally bounded 
Borel function $f:X\to\R$, and a Lipschitz function $g:X\to\R$, 
we introduce the notation
\begin{equation}
\label{e-intform}
\int_\gamma f \, dg
:= \int_I (f\circ\gamma) \, \frac{d}{dt} (g\circ\gamma) \, dt
\, .
\end{equation}
Note that $g\circ\gamma$ is Lipschitz, and therefore 
the derivative in the integral at the right-hand side is well-defined
at almost every~$t\in I$ and bounded in $t$, and  
the integral itself is well-defined.

We say that $\gamma$ has \emph{degree zero} (at almost every 
point of its image) if 
\begin{equation}
\label{e-degzero}
\int_\gamma f \, dg = 0
\quad\text{for every $f, g:X\to\R$ Lipschitz.}
\end{equation}
\end{parag}

\begin{remark}
\label{s-degzerorem}
(i)~A simple approximation argument shows that 
if $\gamma$ has degree zero then $\int_\gamma f \, dg = 0$
for every Lipschitz function $g:X\to\R$ and every bounded 
Borel function $f:X\to\R$.

\smallskip
(ii)~If $X$ is a finite union of oriented regular arcs in $\R^n$, 
or more generally an oriented $1$-rectifiable set, 
and $\gamma:I\to X$ is a Lipschitz path, then 
for $\Haus^1$-almost every $x\in X$ one can define the 
degree of $\gamma$ at $x$, denoted by
$\deg(\gamma,x)$.
Moreover for every $f,g:X\to\R$ there holds
\begin{equation}
\label{e-orientedareaformula}
\int_\gamma f \, dg
:= \int_X f(x) \, \frac{\bd g}{\bd\tau}(x) 
          \, \deg(\gamma,x) \, d\Haus^1(x)
\, , 
\end{equation}
where $\bd g/\bd\tau$ is the tangential derivative 
of $g$.
Using this formula it is easy to check that \eqref{e-degzero}
holds if and only if $\deg(\gamma,x)=0$ for $\Haus^1$-a.e.~$x\in X$.
This justifies the use of the expression ``path with degree zero''
in Subsection~\ref{s-degzero}.

\smallskip
(iii)~Formula \eqref{e-degzero} can be reinterpreted in the framework
of metric currents by saying that the push-forward according to 
$\gamma$ of the canonical $1$-current associated to the (oriented) 
interval $I$ is trivial.
\end{remark}
 
\begin{proposition}
\label{s-degzeroprop}
Let $\gamma:I\to X$ be a Lipschitz path, and let $f,g:X\to\R$ 
be Lipschitz functions. Then the following statements hold.
\begin{itemizeb}
\item[(i)]
\emph{[Invariance under reparametrization]}
Let $\tau:I'\to I$ be an increasing homeomorphism
such that $\gamma\circ\tau$ is Lipschitz.
Then 
\begin{equation}
\label{e-invariance}
\int_\gamma f \, dg = \int_{\gamma\circ\tau} f \, dg 
\, .
\end{equation}
In particular, $\gamma$ has degree zero 
if and only if $\gamma\circ\tau$ has degree zero.
\item[(ii)]
\emph{[Stability]}
Given a sequence of paths $\gamma_n:I\to X$ 
which are uniformly Lipschitz and converge uniformly to $\gamma$, 
then 
\[
\int_{\gamma_n} f \, dg  \to \int_\gamma f \, dg 
\quad\text{as $n\to+\infty$.}
\]
In particular, if each $\gamma_n$ has degree zero, 
then $\gamma$ has degree zero.
\item[(iii)]
\emph{[Parity]}
If $\gamma$ has degree zero then the multiplicity 
$m(\gamma,x)$ is finite and even 
for $\Haus^1$-a.e.~$x\in X$.
\end{itemizeb}
\end{proposition}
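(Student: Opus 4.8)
The plan is to establish the three statements in order, since each builds on the previous one.

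For part (i), invariance under reparametrization, the natural approach is the change-of-variables formula for integrals. The key observation is that $(g\circ\gamma\circ\tau) = (g\circ\gamma)\circ\tau$, so by the chain rule its derivative (which exists a.e.\ since the composition is Lipschitz) is $\tfrac{d}{ds}(g\circ\gamma)(\tau(s)) \cdot \tau'(s)$ at almost every $s\in I'$. Substituting $t=\tau(s)$ into the defining integral \eqref{e-intform} for $\int_{\gamma\circ\tau} f\,dg$ and using that $\tau$ is an increasing homeomorphism should turn it into \eqref{e-intform} for $\int_\gamma f\,dg$. The only subtlety is that $\tau$ need not be Lipschitz nor absolutely continuous; I would handle this by first checking the identity for $C^1$ functions $g$ (or by approximating $\gamma$ by piecewise-affine paths) and then passing to the limit using part (ii), or alternatively invoking the area/change-of-variables formula valid for the Lipschitz map $g\circ\gamma$.

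For part (ii), stability, the goal is to pass to the limit in \eqref{e-intform}. Since the $\gamma_n$ are uniformly Lipschitz with some common constant $C$, the functions $(g\circ\gamma_n)$ are uniformly Lipschitz, hence their derivatives $\tfrac{d}{dt}(g\circ\gamma_n)$ are uniformly bounded in $L^\infty(I)$, so along a subsequence they converge weakly-$*$ in $L^\infty$; the uniform convergence $\gamma_n\to\gamma$ identifies the weak-$*$ limit as $\tfrac{d}{dt}(g\circ\gamma)$. Simultaneously, $(f\circ\gamma_n)\to(f\circ\gamma)$ pointwise (by continuity of $f$ and uniform convergence of $\gamma_n$) and is uniformly bounded, so it converges strongly enough in $L^1$ (by dominated convergence) to pair against the weakly-$*$ convergent derivatives. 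The product of a strongly-convergent and a weakly-$*$-convergent sequence passes to the limit, giving the claim. The degree-zero conclusion is then immediate.

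Part (iii), parity, is the main obstacle and the heart of the proposition. The plan is to reduce to the multiplicity formula \eqref{e-lengthformula} of Proposition~\ref{s-lengthformula} and the degree-zero condition. First, finiteness of $m(\gamma,x)$ for $\Haus^1$-a.e.\ $x$ is already guaranteed by Proposition~\ref{s-lengthformula}. For evenness, the idea is that degree zero forces cancellation: whenever $\gamma$ passes through a point $x$, it does so the same number of times ``going up'' as ``going down'' with respect to any Lipschitz $g$. Concretely, I would fix a suitable Lipschitz $g$ (for instance a distance function $g(\cdot)=d(\cdot,x_0)$) and analyze, for a.e.\ level $\lambda$, the finite set $\gamma^{-1}(\{g=\lambda\})$, using the one-dimensional coarea/change-of-variables structure of $g\circ\gamma$ on $I$; the degree-zero identity $\int_\gamma f\,dg=0$ for all $f$ should encode that the signed count of crossings of each level set vanishes, which combined with the representation of $m(\gamma,\cdot)$ as a sum of crossing-multiplicities yields that the total multiplicity is even. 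Making this rigorous in the purely metric setting is delicate because there is no oriented tangent structure as in Remark~\ref{s-degzerorem}(ii); the honest route is probably to decompose $\gamma$ into monotone pieces (intervals on which $g\circ\gamma$ is monotone), count contributions with signs $\pm1$ according to whether $g\circ\gamma$ increases or decreases, show that $\int_\gamma f\,dg$ equals the sum over $x$ of $f(x)$ times the signed sum of these signs, and deduce from vanishing-for-all-$f$ that the signed sum is zero a.e., whence the number of $+$ pieces equals the number of $-$ pieces and the total is even. The technical difficulty is controlling the behavior at points where $g\circ\gamma$ has critical or flat behavior, which I expect to handle by choosing $g$ (or a countable family of such $g$) generically so that these degenerate levels are $\Haus^1$-negligible.
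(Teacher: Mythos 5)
Your part (ii) is correct and coincides with the paper's proof (strong $L^1$ convergence of $f\circ\gamma_n$ paired against weak* $L^\infty$ convergence of the derivatives of $g\circ\gamma_n$), but both (i) and (iii) have genuine gaps. In (i), the substitution $t=\tau(s)$ with the chain rule is precisely what cannot be done: $\tau$ is only an increasing homeomorphism, hence possibly not absolutely continuous, so neither the a.e.\ identity $\frac{d}{ds}\big((g\circ\gamma)\circ\tau\big)(s)=\frac{d}{dt}(g\circ\gamma)(\tau(s))\,\tau'(s)$ nor the change of variables in the Lebesgue integral is justified. Your proposed repairs do not address this: smoothing $g$ leaves the obstruction (which lies in $\tau$, not $g$) untouched, and ``piecewise-affine approximation of $\gamma$'' is meaningless in a general metric space $X$. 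Your parenthetical alternative --- invoking the area formula for the Lipschitz map $g\circ\gamma$ --- is in fact the paper's route, but you leave it undeveloped: the paper (Lemma~\ref{s-orientedarea}) rewrites $\int_\gamma f\,dg$ as an integral over levels $s$ of $\sum_{x\in g^{-1}(s)}f(x)\,p(x)$, with $p(x)=\sum_{t\in\gamma^{-1}(x)}\sgn(\dot h(t))$ and $h=g\circ\gamma$; applying this to both $\gamma$ and $\gamma\circ\tau$, invariance reduces to the elementary fact that nonzero derivatives of $h$ and $h\circ\tau$ at corresponding times have the same sign because $\tau$ is increasing --- no chain rule needed.

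In (iii) your skeleton is right (at a.e.\ level the crossing times are finite with nonzero derivative, so $p(x)\equiv m(\gamma,x)\bmod 2$, and degree zero forces $p(x)=0$ there), but the decisive step is missing. From $\int_\gamma f\,dg=0$ for all $f$ you only obtain $p(x)=0$ for all $x\in g^{-1}(s)$ for Lebesgue-a.e.\ level $s$; the exceptional set of points is $g^{-1}$ of a Lebesgue-null set of levels, and for a \emph{fixed} $g$ this can contain a set of positive $\Haus^1$-measure (for $g$ constant it is everything). Your hope of choosing $g$ ``generically,'' or a countable family of such $g$, so that the degenerate levels become $\Haus^1$-negligible is exactly the unproved point, and it is not clear any fixed countable family works for all possible bad sets. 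The paper instead argues by contradiction with a $g$ \emph{adapted} to the bad set: if $E:=\{x:\,m(\gamma,x)$ finite and odd$\}$ had $\Haus^1(E)>0$, Lemma~\ref{s-goodtest} supplies a Lipschitz $g$ with $|g(E)|>0$, and its proof is the real content you are missing --- Lemma~\ref{s-quasimetricderiv} gives a positive lower bound on the ``metric speed'' $\liminf_{r\to0}\diam(\gamma(B(t,r)))/r$ at $\mu_\gamma$-a.e.\ $t$, and at a Lebesgue density point $t$ of $\gamma^{-1}(E)$ the distance function $g:=d(\cdot,\gamma(t))$ satisfies $|g(E)|\ge\delta r-o(r)>0$. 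Taking $f:=\sgn(p)$ on $E\setminus g^{-1}(N)$ (allowed by Remark~\ref{s-degzerorem}(i), since such $f$ is only bounded Borel) then yields $\int_\gamma f\,dg\ge|g(E)|>0$, contradicting degree zero. Without a substitute for this good-test-function lemma, your argument proves evenness only outside a set that the chosen $g$ may crush to a null set of levels, which is strictly weaker than the statement.
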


We can now state the main result of this section.

\begin{theorem}
\label{s-canopara}
Let $X$ be a continuum with finite length. 
Then there exists a path $\gamma: [0,1]\to X$ with the following properties:
\begin{itemizeb}
\item[(i)]
$\gamma$ is closed, Lipschitz, surjective, and has degree zero;
\item[(ii)]
$m(\gamma,x)=2$ for $\Haus^1$-a.e.~$x\in X$, and $\Len(\gamma)=2\,\Haus^1(X)$;
\item[(iii)]
$\gamma$ has constant speed, equal to $2\,\Haus^1(X)$.
\end{itemizeb}
\end{theorem}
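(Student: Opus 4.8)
The plan is to construct the path $\gamma$ as a limit of explicit paths obtained by ``doubling'' finite skeleta of $X$, exploiting the canonical $\delta$-chain machinery already developed. More precisely, I would first use Theorem~\ref{s-lunghezza} (in the form of the $\delta$-partitions of Proposition~\ref{s-lunghezza2}) to approximate $X$ from inside by finite unions of small continua, then reduce each approximating piece to a finite network via Proposition~\ref{s-pathconn}, and finally traverse that network by a closed path that goes along every edge exactly twice, once in each direction. The degree-zero property and the multiplicity count are built into this doubling at the discrete level; the content of the proof is to show they survive the limit.

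\textbf{Step 1: construction of approximating networks.}
Fix a sequence $\delta_n\to 0$ and, using Proposition~\ref{s-lunghezza2}, choose a finite disjoint family of continua $\{E_i^n\}$ contained in $X$ with $\diam(E_i^n)\le\delta_n$ and $\sum_i\diam(E_i^n)\to\Haus^1(X)$. Since $X$ is a continuum, I can connect these pieces (and a chosen basepoint) along $X$ by short arcs, producing a connected finite network $G_n\subset X$ whose total length is close to $\Haus^1(X)$ and whose Hausdorff distance to $X$ tends to $0$. Here I would invoke Proposition~\ref{s-pathconn} to replace each connection by an injective geodesic inside $X$, so that $G_n$ really is a finite union of arcs, i.e.\ a genuine network in the sense of the footnote in Section~\ref{s4}.

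\textbf{Step 2: doubling and passing to the limit.}
On each finite network $G_n$ there is (by an Eulerian/tree-traversal argument applied to the doubled graph, where every edge is replaced by two oppositely oriented copies) a closed path $\gamma_n:[0,1]\to G_n\subset X$ that covers every arc exactly twice, once in each direction, with $\Len(\gamma_n)=2\,\Len(G_n)\to 2\,\Haus^1(X)$. I reparametrize each $\gamma_n$ to constant speed $2\,\Haus^1(X)$ (Remark~\ref{s-pathrem}(ii)), so the family is uniformly Lipschitz; by Arzel\`a--Ascoli a subsequence converges uniformly to a closed Lipschitz path $\gamma$. Surjectivity follows because $G_n\to X$ in Hausdorff distance and $\gamma$ has constant speed $2\,\Haus^1(X)$. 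Lower semicontinuity of length (Remark~\ref{s-pathrem}(i)) gives $\Len(\gamma)\le 2\,\Haus^1(X)$, while Proposition~\ref{s-lengthformula} and the fact that $\gamma$ is surjective onto a set of measure $\Haus^1(X)$ force $\Len(\gamma)\ge 2\,\Haus^1(X)$ once I know the multiplicity is at least $2$ a.e.; combining gives $\Len(\gamma)=2\,\Haus^1(X)$ and hence constant speed $2\,\Haus^1(X)$, proving (i) and (iii).

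\textbf{Step 3: degree zero and the multiplicity count.}
Each discrete path $\gamma_n$ has degree zero in the strong elementary sense that for any Lipschitz $f,g$ the integral $\int_{\gamma_n} f\,dg$ cancels edge-by-edge, since each arc is traversed once in each direction. By the Stability statement Proposition~\ref{s-degzeroprop}(ii), the uniform limit $\gamma$ again has degree zero, which is (i). Then Proposition~\ref{s-degzeroprop}(iii) yields that $m(\gamma,x)$ is finite and \emph{even} for $\Haus^1$-a.e.\ $x$. Finally, combining $\Len(\gamma)=2\,\Haus^1(X)=\int_X m(\gamma,x)\,d\Haus^1(x)$ from Proposition~\ref{s-lengthformula} with the evenness and with $m(\gamma,\cdot)\ge 1$ a.e.\ (surjectivity) forces $m(\gamma,x)=2$ for $\Haus^1$-a.e.\ $x$, giving (ii). The main obstacle I expect is Step~2: ensuring the discrete traversals $\gamma_n$ remain \emph{uniformly} Lipschitz and that no length is lost in the limit, so that the measure identity $\int_X m\,d\Haus^1=2\Haus^1(X)$ holds with equality rather than inequality. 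This is precisely where the sharp form of Proposition~\ref{s-lunghezza2} (that the approximating diameters capture \emph{all} of the length) is essential, since otherwise the limit path could collapse some arcs and fail the exact multiplicity-$2$ conclusion.
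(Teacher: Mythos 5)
Your endgame (Steps~2--3) is sound and in fact coincides with the paper's own: stability of degree zero under uniform limits (Proposition~\ref{s-degzeroprop}(ii)), parity (Proposition~\ref{s-degzeroprop}(iii)) combined with surjectivity to get $m(\gamma,x)\ge 2$ a.e., then the area formula (Proposition~\ref{s-lengthformula}) and semicontinuity of length to force $m(\gamma,\cdot)=2$ a.e.\ and $\Len(\gamma)=2\,\Haus^1(X)$, and finally constant speed via Remark~\ref{s-pathrem}(v). (Your closing worry is a misdiagnosis: no sharp lower bound on $\liminf\Len(\gamma_n)$, hence no appeal to Proposition~\ref{s-lunghezza2}, is needed, because the upper bound $\Len(\gamma_n)\le 2\,\Haus^1(X)$ is automatic from $G_n\subset X$, and parity plus surjectivity already force equality in the limit.) The genuine gap is in Step~1: your approximants $G_n$ are not finite networks. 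The sets $E_i^n$ produced by Theorem~\ref{s-lunghezza} are arbitrary continua of small diameter, not arcs. Proposition~\ref{s-pathconn} lets you replace each $E_i^n$ by an injective geodesic arc $A_i^n\subset E_i^n$ with $\Len(A_i^n)\ge\diam(E_i^n)$ (which preserves the length count and, up to $\delta_n$, the density), but even after this replacement the union of these arcs and of the connecting geodesics need not be a finite union of arcs meeting only at endpoints: two injective paths in a metric continuum can intersect in an infinite, even Cantor-type, set. So the ``doubled graph'' on which your Eulerian/tree-traversal is supposed to run does not exist as stated, and this is exactly the step where the paths $\gamma_n$ are actually produced.

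The repair is to attach the pieces one at a time: given the current compact connected set $G$ with its closed traversal, take a geodesic $\gamma'$ from the new arc to $G$, and cut it at the last exit time from the new arc and at the first hitting time of $G$, so that the connector meets $G$ in exactly one point; then insert an out-and-back detour through the connector and the new arc into the current closed path. This is precisely the joining operation $\gamma\ltimes\gamma'$ of Subsection~\ref{s-joinpath} together with Lemma~\ref{s-joinpathlemma} (which records exactly the Lipschitz, length, multiplicity, and degree-zero bookkeeping you need), and carrying it out essentially turns your construction into the paper's inductive proof; the paper instead selects each new arc by a farthest-point criterion, which yields surjectivity of the limit directly from summability of the attached lengths. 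In your scheme surjectivity rests on the additional claim, asserted but not proved, that a disjoint family of continua with $\sum_i\diam(E_i^n)\ge\Haus^1(X)-\eps_n$ is $\eps_n$-dense in $X$; this is true (if some ball $B(x,r)$ with $r>\eps_n$ missed all the $E_i^n$, Corollary~\ref{s-concompball} would give $\Haus^1(X\cap B(x,r))\ge r$ and hence $\sum_i\diam(E_i^n)\le\Haus^1(X)-r$), but it needs saying. In short: correct architecture and correct limit analysis, but the central discrete step requires the single-point-attachment mechanism rather than a graph-theoretic Eulerian circuit.
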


\begin{remark}
\label{s-canopararem}
(i)~The existence of a Lipschitz surjective path $\gamma:[0,1]\to X$ with 
$\Lip(\gamma) \le 2\,\Haus^1(X)$ was first proved in \cite{Wa}.
Here we simply point out 
that $\gamma$ can be taken of degree zero.

\smallskip
(ii)~An immediate corollary of this result is that 
every continuum $X$ with finite length is a rectifiable
set of dimension $1$. 

\smallskip
(iii)~If $X$ is contained in $\R^n$, then one can apply 
Rademacher's differentiability theorem to the parametrization
$\gamma$ and prove with little effort that $X$ admits a tangent
line in the classical sense at $\Haus^1$-a.e.~point.
%
\end{remark}

\bigskip
The rest of this section is devoted to the proofs of
Proposition~\ref{s-degzeroprop} and Theorem~\ref{s-canopara}. 
At the end of the section we give
another proof of \Golab's theorem based on the latter.

\begin{parag}[Additional notation]
\label{s-orientednotation}
Let be given a Lipschitz path $\gamma:I\to X$
and a Lipschitz function $g:X\to\R$.
We write $h:=g\circ\gamma$, 
and denote by $N$ the set of all $s\in\R$ such that one of the following 
properties fails:
\begin{itemizeb}
\item[(a)]
the set $h^{-1}(s)$ is finite;
\item[(b)]
the derivative $\dot{h}$ exists and is not $0$
at every $t\in h^{-1}(s)$.
\end{itemizeb}
Thus for every $s\in\R\setminus N$ and every $x\in g^{-1}(s)$, 
the following sum is well-defined and finite:
\begin{equation}
\label{e-grado}
p(x) := \sum_{t\in\gamma^{-1}(x)} \sgn\big( \dot{h}(t) \big)
\, , 
\end{equation}
where, as usual, $\sgn(x):=1$ if $x>0$, $\sgn(x):=-1$ if $x<0$, and $\sgn(0)=0$.
\end{parag}

\begin{lemma}
\label{s-orientedarea}
Take $\gamma$, $g$, $h$, $N$ and $p$ as in Subsection~\ref{s-orientednotation}. 
Then $|N|=0$ and for every Lipschitz function $f:X\to\R$ there holds
\begin{equation}
\label{e-orientedarea}
  \int_\gamma f \, dg
= \int\limits_{\R\setminus N} 
    \bigg[ \sum_{x\in g^{-1}(s)} f(x) \, p(x) \bigg] \, ds
\, .
\end{equation}
\end{lemma}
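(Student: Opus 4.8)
The plan is to reduce the whole statement to the classical one-dimensional area formula (change of variables) for the Lipschitz function $h=g\circ\gamma:I\to\R$. By the definition \eqref{e-intform} we have $\int_\gamma f\,dg=\int_I(f\circ\gamma)\,\dot h\,dt$, so, setting $\phi:=f\circ\gamma$ (a bounded Borel, in fact Lipschitz, function on $I$), and once the double sum on the right of \eqref{e-orientedarea} has been rewritten as a sum over $t\in h^{-1}(s)$, the identity to prove becomes
\[
\int_I \phi(t)\,\dot h(t)\,dt=\int_{\R\setminus N}\bigg[\sum_{t\in h^{-1}(s)}\phi(t)\,\sgn\big(\dot h(t)\big)\bigg]ds.
\]

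First I would dispose of the reindexing. For $s\in\R\setminus N$ and $x\in g^{-1}(s)$ one has $t\in\gamma^{-1}(x)$ if and only if $\gamma(t)=x$ and $h(t)=g(\gamma(t))=s$; moreover $p(x)=0$ whenever $x\notin\gamma(I)$, since then $\gamma^{-1}(x)=\varnothing$. Hence the pairs $(x,t)$ with $x\in g^{-1}(s)$ and $t\in\gamma^{-1}(x)$ are in bijection with the points $t\in h^{-1}(s)$ (via $x=\gamma(t)$), and using $f(x)=f(\gamma(t))=\phi(t)$ on each such pair I get
\[
\sum_{x\in g^{-1}(s)}f(x)\,p(x)=\sum_{t\in h^{-1}(s)}\phi(t)\,\sgn\big(\dot h(t)\big).
\]
For $s\notin N$ this is a finite sum by property (a) whose summands are well defined by property (b), so the right-hand side of \eqref{e-orientedarea} is meaningful.

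Next, to prove $|N|=0$, I would write $N=N_a\cup N_b$, where $N_a$ is the set of $s$ for which $h^{-1}(s)$ is infinite and $N_b=h(Z)$ with $Z$ the set of $t\in I$ at which $\dot h$ fails to exist or vanishes. By Banach's indicatrix theorem (the unsigned one-dimensional area formula), $\int_\R\#\big(h^{-1}(s)\big)\,ds=\int_I|\dot h|\,dt\le\Lip(h)\,|I|<+\infty$, whence $\#\big(h^{-1}(s)\big)<+\infty$ for a.e.\ $s$ and $|N_a|=0$. To see that $|N_b|=|h(Z)|=0$, split $Z=Z_1\cup Z_0$, where $Z_1$ is the (Lebesgue-null) set of non-differentiability points of $h$ and $Z_0:=\{\dot h=0\}$: since $h$ is Lipschitz it maps the null set $Z_1$ to a null set, while $|h(Z_0)|\le\int_{Z_0}|\dot h|\,dt=0$ by the area formula once more (each $s\in h(Z_0)$ is attained, so $\#(Z_0\cap h^{-1}(s))\ge 1$ there). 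Therefore $|N|=0$.

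Finally, the identity itself follows from the \emph{signed} one-dimensional area formula: writing $\dot h=\sgn(\dot h)\,|\dot h|$ and applying the unsigned area formula to the bounded Borel integrand $u:=\phi\,\sgn(\dot h)$ yields
\[
\int_I \phi\,\dot h\,dt=\int_I \phi\,\sgn(\dot h)\,|\dot h|\,dt=\int_\R\bigg[\sum_{t\in h^{-1}(s)}\phi(t)\,\sgn\big(\dot h(t)\big)\bigg]ds,
\]
and since $|N|=0$ the last integral may be restricted to $\R\setminus N$, giving \eqref{e-orientedarea}. The only nontrivial ingredient is the one-dimensional area formula for Lipschitz functions; everything else is bookkeeping. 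I expect the main obstacle to be the careful treatment of the critical and non-differentiable points (the set $Z$), which is exactly what forces one to excise the null set $N$; I would either invoke the area formula as a classical fact or, to keep the exposition self-contained, sketch its elementary proof via oscillation sums over finer and finer partitions of $I$.
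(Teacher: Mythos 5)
Your proof is correct and takes essentially the same route as the paper's: you decompose $N$ into the set of infinite fibres plus the image under $h$ of the critical set $\{\dot h=0\}$ and the non-differentiability set, killing each piece by the one-dimensional area formula (Banach indicatrix), the Lipschitz image of a null set, and Rademacher, and you then derive \eqref{e-orientedarea} by applying the unsigned area formula to $(f\circ\gamma)\,\sgn(\dot h)$, exactly as in the paper. The only difference is that you spell out the reindexing of the sum from $x\in g^{-1}(s)$ to $t\in h^{-1}(s)$ (including the observation that $p(x)=0$ off $\gamma(I)$), which the paper dispatches with the phrase ``suitably rewriting the sum within square brackets''.
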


\begin{proof}
To prove that $|N|=0$ we write 
\[
N=N_0\cup h(E_0) \cup h(E_1)
\, , 
\]
where $N_0$ is the set of all $s\in\R$ such that $h^{-1}(s)$ is infinite, 
$E_0$ is the set of all $t\in I$ where the derivative of $h$ exists 
and is $0$, $E_1$ is the set of all $t\in I$ where the derivative of 
$h$ does not exists.

We observe now that $|N_0|=0$ and $|h(E_0)|=0$ 
by the one-dimensional area formula applied 
to the Lipschitz function $h:I\to\R$,%
\footnoteb
{The one-dimensional area formula we use reads as follows: if $h:I\to\R$ is 
Lipschitz and $f:I\to\R$ is either positive or in $L^1(I)$ then 
\[
\int_I f \, |\dot h| \, dt = \int\limits_\R \bigg[ \sum_{t\in h^{-1}(s)} f(t) \bigg] ds
\, .
\]
In particular $\int_I |\dot h| \, dt = \int_\R m(h,s) \, ds$ where 
$m(h,s)$ is the multiplicity of $h$ at $s$, which implies that $m(h,s)$ is finite 
for a.e.~$s\in\R$.}
while $|E_1|=0$ by Rademacher's theorem and then 
$|h(E_1)|=0$ because $h$ is Lipschitz.
We conclude that $|N|=0$.

\smallskip
Let us prove \eqref{e-orientedarea}. 
Using \eqref{e-intform}, the area formula, and that $|N|=0$,
we get
\begin{align*}
   \int_\gamma f \, dg
  =\int_I (f\circ\gamma) \, \dot{h} \, dt 
& =\int_I (f\circ\gamma) \, \sgn(\dot{h}) 
        \, |\dot{h}| \, dt \\
& =\int\limits_{\R\setminus N} \bigg[ 
             \sum_{t\in h^{-1}(s)} f(\gamma(t)) \, \sgn(\dot{h}(t)) 
         \bigg] \, ds
  \, , 
\end{align*}
and we obtain \eqref{e-orientedarea} by suitably rewriting the
sum within square brackets.
\end{proof}

\begin{proof}[Proof of Proposition~\ref{s-degzeroprop}(i)]
Given $g$ and $\gamma$, we take $h$, $N$ and $p$ as in 
Subsection~\ref{s-orientednotation}, and let $h'$ $N'$ and $p'$ be 
the analogous quantities where $\gamma$ is replaced 
by $\gamma\circ\tau$. 
Thanks to Lemma~\ref{s-orientedarea}, 
identity \eqref{e-invariance} can be proved by showing that
$p(x)=p'(x)$ for every $x$ such that $g(x)\notin\setminus(N\cup N')$.

Taking into account \eqref{e-grado} and the fact that $h'=h\circ\tau$, 
the identity $p(x)=p'(x)$ reduces to the following elementary statement:
given $t\in I$ such that the derivative of $h$ at $t$ exists and is nonzero, 
and the derivative of $h'=h\circ\tau$ at $t':=\tau^{-1}(t)$ exists and is nonzero, 
then these derivatives have the same sign (recall that $\tau$ is increasing). 
\end{proof}

\begin{proof}[Proof of Proposition~\ref{s-degzeroprop}(ii)]
In view of \eqref{e-intform} it suffices to show that 
\[
\int_I (f \circ \gamma_n) \, \frac{d}{dt} (g\circ\gamma_n) \, dt
\to
\int_I (f \circ \gamma) \, \frac{d}{dt} (g\circ\gamma) \, dt
\quad\text{as $n\to+\infty$.}
\]
This is an immediate consequence of the fact that the functions
$f \circ \gamma_n$ converge to $f \circ \gamma$ uniformly, 
and therefore strongly in $L^1(I)$,
while the derivatives of the functions $g\circ\gamma_n$ converge 
to the derivative of $g\circ\gamma$
in the weak* topology of $L^\infty(I)$. 
\end{proof}

The next lemmas are used in the proof of Proposition~\ref{s-degzeroprop}(iii).

\begin{lemma}
\label{s-quasimetricderiv}
Let $\gamma:I\to X$ be a path with finite length, and let 
$\mu_\gamma$ be the corresponding length measure (Subsection~\ref{s-path}).
Then for $\mu_\gamma$-a.e.~$t\in I$ there holds
\begin{equation}
\label{e-quasimetricderiv}
\rho(t):=\liminf_{r\to 0} \frac{ \diam\big( \gamma(B(t,r)) \big) }{ r } >0
\, .
\end{equation}
\end{lemma}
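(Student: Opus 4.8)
The plan is to prove that the ``bad set'' $Z:=\{t\in I:\rho(t)=0\}$ is $\mu_\gamma$-negligible by first showing that its image $\gamma(Z)$ is $\Haus^1$-negligible, and then transferring this back to the domain by means of the length formula. The two ingredients I would prepare in advance are: the push-forward identity $\gamma_\#\mu_\gamma=m(\gamma,\cdot)\,\Haus^1$ from Proposition~\ref{s-lengthformula}, and the auxiliary inequality
\[
\Haus^1(\gamma(N)) \le \mu_\gamma(N)
\quad\text{for every Borel } N\subseteq I,
\]
which I would derive from Lemma~\ref{s-lengthformula2} (giving $\Haus^1(\gamma(J))\le\Len(\gamma,J)=\mu_\gamma(J)$ on each closed interval $J\subseteq I$), the $\sigma$-subadditivity of $\Haus^1$, and the outer regularity of the finite measure $\mu_\gamma$ with respect to covers by intervals. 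This inequality, valid for a \emph{general} finite-length path, is what lets me avoid reparametrizing to constant speed.

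The heart of the argument is a covering estimate. I would fix $\eps>0$ and consider the family $\mathcal F_\eps$ of all balls $B(t,r)$ with $t\in Z$ and $\diam(\gamma(B(t,r)))<\eps r$. Since $\rho(t)=\liminf_{r\to 0}\diam(\gamma(B(t,r)))/r=0$ for every $t\in Z$, each such $t$ is the center of balls in $\mathcal F_\eps$ of arbitrarily small radius, so $\mathcal F_\eps$ is a \emph{fine} cover of $Z$. Applying the Vitali covering theorem for the Radon measure $\mu_\gamma$, I would extract a countable disjoint subfamily $\{B_k=B(t_k,r_k)\}$ with $\mu_\gamma\big(Z\setminus\bigcup_k B_k\big)=0$. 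Writing $N:=Z\setminus\bigcup_k B_k$, the sets $\gamma(B_k)$ cover $\gamma(Z)$ up to $\gamma(N)$, and since $\diam(\gamma(B_k))<\eps r_k=\tfrac{\eps}{2}|B_k|$ with the $B_k$ disjoint in $I$, while $\Haus^1(\gamma(N))\le\mu_\gamma(N)=0$ by the auxiliary inequality, I obtain for every $\delta>0$ (restricting the radii so that $\eps r_k\le\delta$)
\[
\Haus^1_\delta(\gamma(Z)) \le \sum_k \diam(\gamma(B_k)) + \Haus^1(\gamma(N)) \le \tfrac{\eps}{2}\,|I|.
\]
Letting $\eps\to 0$ gives $\Haus^1_\delta(\gamma(Z))=0$, hence $\Haus^1(\gamma(Z))=0$ by Remark~\ref{s-remhaus}(iii).

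To transfer this back I would enclose $\gamma(Z)$ in a Borel set $A$ with $\Haus^1(A)=0$; then, since $Z\subseteq\gamma^{-1}(A)$, the push-forward identity yields
\[
\mu_\gamma(Z) \le \mu_\gamma(\gamma^{-1}(A)) = \int_A m(\gamma,x)\,d\Haus^1(x) = 0,
\]
which is the claim. Measurability causes no trouble: $\diam(\gamma(B(t,r)))$ is nondecreasing in $r$, so $\rho$ is a countable $\liminf$ of Borel functions of $t$ and $Z$ is Borel.

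The step I expect to be the main obstacle is precisely the covering estimate, and in particular the choice of covering theorem. The elementary $5r$-Vitali lemma used earlier in the paper is useless here, because replacing each $B_k$ by its fivefold enlargement destroys the diameter control $\diam(\gamma(B_k))<\eps r_k$ on which everything rests; one genuinely needs the measure-theoretic Vitali theorem that covers $Z$ up to a $\mu_\gamma$-null remainder, and then the auxiliary inequality $\Haus^1(\gamma(N))\le\mu_\gamma(N)$ to annihilate that remainder. It is this combination that makes the proof go through for an arbitrary (not necessarily Lipschitz) rectifiable path, rather than forcing a delicate reduction to the constant-speed parametrization.
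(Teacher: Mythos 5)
Your proof is correct, and its skeleton is exactly the paper's: show that the image of the bad set $Z=\{\rho=0\}$ is $\Haus^1$-null by covering $Z$ with balls $B(t,r)$ on which $\diam(\gamma(B(t,r)))\le\eps r$, then pull back to $\mu_\gamma(Z)=0$ via the push-forward identity \eqref{e-lengthformula2}. The genuine divergence is the covering step, and there the obstacle you identify is not real: the elementary $5r$-Vitali lemma is \emph{not} useless here, because the paper applies it to the \emph{shrunken} balls $B(t,r_t/5)$, where $r_t$ is a radius with $\diam(\gamma(B(t,r_t)))\le\eps r_t$. The extracted disjoint balls $B_j=B(t_j,r_j/5)$ then have enlargements $B_j'=B(t_j,r_j)$ which both cover $Z$ \emph{and} carry the diameter control, while disjointness of the $B_j$ inside $I$ gives $\sum_j r_j\le\tfrac{5}{2}|I|$; hence $\Haus^1_\infty(\gamma(Z))\le\eps\sum_j r_j\le\tfrac{5\eps}{2}|I|\to 0$, so the enlargement costs only a factor $5$, not the estimate. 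Your alternative — the measure-theoretic Vitali theorem for the Radon measure $\mu_\gamma$ (legitimate, since $\mu_\gamma$ is a finite Borel measure on $I\subset\R$), combined with the auxiliary inequality $\Haus^1(\gamma(N))\le\mu_\gamma(N)$ to annihilate the exceptional $\mu_\gamma$-null remainder — does go through: the auxiliary inequality follows, as you say, from Lemma~\ref{s-lengthformula2} by exhausting open sets by closed intervals and using outer regularity, and it is a clean statement worth having. But it invokes strictly heavier machinery (a Besicovitch-type covering theorem) to reach the same conclusion, and the extra generality you credit it with is illusory: the paper's elementary argument likewise requires no Lipschitz hypothesis and no reduction to the constant-speed parametrization, working directly for an arbitrary path of finite length. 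Two minor points: in your measurability remark, $r\mapsto\diam(\gamma(B(t,r)))$ is monotone but the ratio $\diam(\gamma(B(t,r)))/r$ is not, so reducing the liminf to rational radii needs the two-sided comparison with a rational $q\in[r/(1+\eta),\,r]$ (harmless, but worth stating); and in the covering estimate one should take the balls closed and contained in $I$, the two endpoints of $I$ causing no trouble since $\mu_\gamma$ has no atoms for a continuous path.
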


This lemma would be an immediate consequence of formula~\eqref{e-metricderivative}, 
which however we did not prove. The proof below is self-contained.

\begin{proof}
Let $E := \{t\in I: \, \rho(t)=0\}$.
We must prove that $\mu_\gamma(E)=0$.

Let $\eps>0$ be fixed for the time being.
For every $t\in E$ we can find $r_t>0$ such that 
the ball (i.e., centered interval) $B(t,r_t)$ 
is contained in $I$ and
\[
\diam\big( \gamma(B(t,r_t)) \big) \le \eps r_t
\, .
\]
Consider now the family $\F$ of all balls $B(t,r_t/5)$ with $t\in E$. 
Using Vitali's covering lemma (see for example \cite{AT}, 
Theorem~2.2.3), we can extract from $\F$
a subfamily of disjoint balls $B_j=B(t_j,r_j/5)$ 
such that the balls $B_j':=B(t_j,r_j)$ cover $E$. 
Thus the sets $\gamma(B'_j)$ cover $\gamma(E)$ and
can be used to estimate $\Haus^1_\infty(E)$ (see Subsection~\ref{s-haus}):
\begin{align*}
      \Haus^1_\infty(\gamma(E)) 
& \le \sum_j \diam(\gamma(B'_j)) \\
& \le \eps \sum_j r_j 
  = \frac{5\eps}{2} \sum_j |B_j|
  \le \frac{5\eps}{2} \,|I|
\end{align*}
(for the last inequality we used that the balls $B_j$ are disjoint
and contained in $I$).
Since $\eps$ is arbitrary, we obtain that $\Haus^1_\infty(\gamma(E))=0$ and
therefore $\Haus^1(\gamma(E))=0$ (cf.~Remark~\ref{s-remhaus}(iii)).
Using formula \eqref{e-lengthformula2} we finally get 
\[
\mu_\gamma(E) = \int_{\gamma(E)} m(\gamma,x)\, d\Haus^1(x) = 0
\, .
\qedhere
\]
\end{proof}

\begin{lemma}
\label{s-goodtest}
Let $\gamma:I\to X$ be a path with finite length, 
and let $E$ be a Borel subset of $\gamma(I)$ with $\Haus^1(E)>0$.
Then there exists a Lipschitz function $g:X\to\R$ such that
$|g(E)|>0$.
\end{lemma}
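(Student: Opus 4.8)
The plan is to reduce everything to a statement about distance functions on $X$ and the one-dimensional area formula. First I would reparametrize: by Remark~\ref{s-pathrem}(ii)--(iii) I may assume $\gamma\colon I\to X$ is Lipschitz with constant speed $c$, noting that $c>0$, since otherwise $\gamma$ is constant and $\gamma(I)$, hence $E$, would be a single point with $\Haus^1(E)=0$. Then $\mu_\gamma=c\,\Leb^1$. Set $A:=\gamma^{-1}(E)$, a Borel subset of $I$ with $\gamma(A)=E$ (because $E\subset\gamma(I)$). Applying formula~\eqref{e-lengthformula2} with $f=\mathbf 1_E$ and using $m(\gamma,x)\ge 1$ on $E$ gives $c\,|A|=\mu_\gamma(A)=\int_E m(\gamma,\cdot)\,d\Haus^1\ge\Haus^1(E)>0$, so $|A|>0$. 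Now for any Lipschitz $g\colon X\to\R$ write $\phi:=g\circ\gamma$; since $\phi(A)=g(E)$, the one-dimensional area formula (cf.\ the footnote to Lemma~\ref{s-orientedarea}) gives $\int_A|\dot\phi|\,dt=\int_\R\#(A\cap\phi^{-1}(s))\,ds$, whence $\int_A|\dot\phi|\,dt>0$ forces $|g(E)|=|\phi(A)|>0$. Thus it suffices to produce a Lipschitz $g$ with $\frac{d}{dt}(g\circ\gamma)\neq 0$ on a positive-measure subset of $A$.

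To find such a $g$ I would use the countable family of $1$-Lipschitz functions $g_n:=d(\cdot,x_n)$, where $\{x_n\}$ is dense in the separable set $\gamma(I)$, and set $\phi_n:=g_n\circ\gamma$ and $\Psi:=\sup_n|\dot\phi_n|\le c$ (a Borel function). The key point is the elementary identity $d(\gamma(s),\gamma(s'))=\sup_n|\phi_n(s)-\phi_n(s')|$, valid because $\{x_n\}$ is dense in $\gamma(I)$; taking the supremum over $s,s'\in B(t,r)$ it yields $\diam(\gamma(B(t,r)))=\sup_n\mathrm{osc}(\phi_n,B(t,r))$. Since each $\phi_n$ is absolutely continuous, $\mathrm{osc}(\phi_n,B(t,r))\le\int_{B(t,r)}|\dot\phi_n|\le\int_{B(t,r)}\Psi$, and therefore $\diam(\gamma(B(t,r)))\le\int_{B(t,r)}\Psi$. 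Dividing by $r$ and letting $r\to 0$, Lebesgue's differentiation theorem gives, for a.e.\ $t$,
\[
\rho(t)=\liminf_{r\to 0}\frac{\diam(\gamma(B(t,r)))}{r}\le\lim_{r\to 0}\frac{1}{r}\int_{t-r}^{t+r}\Psi=2\,\Psi(t)\,.
\]

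Finally I would invoke Lemma~\ref{s-quasimetricderiv}: $\rho>0$ holds $\mu_\gamma$-a.e., hence $\Leb^1$-a.e.\ (as $\mu_\gamma=c\,\Leb^1$ with $c>0$), so by the displayed inequality $\Psi>0$ a.e.\ on $A$. This means $A$ is covered, up to a null set, by the countably many sets $A_n:=\{t\in A:\dot\phi_n(t)\ne 0\}$; since $|A|>0$, some $A_n$ has positive measure, i.e.\ $\int_A|\dot\phi_n|\,dt>0$. Taking $g:=g_n$ then gives $|g(E)|>0$ by the reduction in the first paragraph. I expect the only genuinely delicate step to be the passage $\rho(t)\le 2\,\Psi(t)$: it is precisely here that the bound $\mathrm{osc}(\phi_n,\cdot)\le\int|\dot\phi_n|$, together with the interchange $\sup_n\int_{B}|\dot\phi_n|\le\int_{B}\sup_n|\dot\phi_n|$, lets me trade the problematic supremum over the infinitely many functions $g_n$ for a single integrable majorant $\Psi$, thereby sidestepping the fact that the metric derivative of $\gamma$ is never actually constructed in the paper.
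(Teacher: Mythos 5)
Your proof is correct, but it takes a genuinely different route from the paper's. Both arguments begin the same way — reduce to constant speed via Remark~\ref{s-pathrem}(ii)--(iii), deduce $|\gamma^{-1}(E)|>0$, and invoke Lemma~\ref{s-quasimetricderiv} — but the mechanisms diverge from there. The paper works \emph{locally at a single point}: it picks one $t\in F:=\gamma^{-1}(E)$ which is simultaneously a Lebesgue density point of $F$ and a point where $\rho(t)>0$, takes the single test function $g:=d(\cdot,\gamma(t))$, and concludes by the explicit inclusion $g(E)=h(F)\supset h(B(t,r))\setminus h(F'\cap B(t,r))$ together with the quantitative bounds $|h(B(t,r))|\ge\delta r$ and $|h(F'\cap B(t,r))|=o(r)$; no area formula is needed at the final step. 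You instead work \emph{globally and almost everywhere}: the identity $\diam(\gamma(B(t,r)))=\sup_n\mathrm{osc}(\phi_n,B(t,r))$ for the countable family $g_n=d(\cdot,x_n)$, the uniform majorant $\Psi=\sup_n|\dot\phi_n|$, and Lebesgue differentiation give $\rho\le 2\Psi$ a.e., so Lemma~\ref{s-quasimetricderiv} forces $\Psi>0$ a.e.; pigeonholing over $n$ on $A$ and the one-dimensional area formula (Banach indicatrix) then convert $\int_A|\dot\phi_n|\,dt>0$ into $|\phi_n(A)|>0$. Your version buys a slightly stronger conclusion — $g$ can always be taken from the \emph{fixed} countable family $\{d(\cdot,x_n)\}$, and you actually establish the a.e.\ lower bound $\sup_n|\tfrac{d}{dt}(g_n\circ\gamma)|\ge\rho/2$, a serviceable substitute for the metric derivative the paper deliberately avoids constructing — at the cost of heavier machinery (the area formula, plus a second use of \eqref{e-lengthformula2}) where the paper gets by with a density-point estimate and a set inclusion. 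Two small points worth making explicit if you write this up: the interchange you flag as delicate is in fact harmless, since you only need $|\dot\phi_n|\le\Psi$ pointwise for each $n$ before taking the supremum of the integrals; and $g(E)=\phi_n(A)$ need not be Borel, only analytic hence Lebesgue measurable, so $|g(E)|$ should be read as outer measure (the paper's own proof has the same implicit convention for $h(F)$).
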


\begin{proof}
We can assume that $\gamma$ has constant speed~$1$
(Remark~\ref{s-pathrem}(iii)), which implies that
$\gamma$ has Lipschitz constant $1$ and the length 
measure $\mu_\gamma$ agrees with the Lebesgue measure
on $I$.

We set $F:=\gamma^{-1}(E)$ and $F':=I\setminus F$.
Since $E=\gamma(F)$ is not $\Haus^1$-negligible,
$F$ must have positive Lebesgue measure, and 
using Lemma~\ref{s-quasimetricderiv} and Lebesgue's density theorem 
we can find a point $t\in F$ where \eqref{e-quasimetricderiv} holds
and $F$ has density~$1$, and accordingly $F'$ has density $0$.

We define $g:X\to\R$ by $g(x) := d(x,\gamma(t))$, 
and set $h:=g\circ\gamma$.
By \eqref{e-quasimetricderiv} there exists 
$\delta>0$ such that $\diam(\gamma(B(t,r))) \ge 2\delta r$
for every ball $B(t,r)$ contained in~$I$. 
This implies that
\[
\sup_{t'\in B(t,r)} h(t') =
\sup_{t'\in B(t,r)} d(\gamma(t'),\gamma(t))
\ge \delta r
\, .
\]
Thus the interval $h(B(t,r))$ contains $[0,\delta r]$ and then
\begin{equation}
\label{e-goodtest1}
\big| h(B(t,r)) \big| \ge \delta r
\, .
\end{equation}
On the other hand, the fact that $h$ is Lipschitz 
and $F'$ has density $0$ at $t$ implies
\begin{equation}
\label{e-goodtest2}
\big| h (F'\cap B(t,r)) \big| = o(r)
\, .
\end{equation}
Finally, the inclusion 
\[
g(E) =h(F) 
\supset h(B(t,r)) \setminus h(F'\cap B(t,r))
\, ,
\]
together with estimates \eqref{e-goodtest1} and \eqref{e-goodtest2}, yields
\[
|g(E)| \ge \delta r - o(r)
\,  , 
\]
and we conclude by observing that $\delta r - o(r)>0$ 
for $r$ small enough.
\end{proof}

\begin{proof}[Proof of Proposition~\ref{s-degzeroprop}(iii)]
We already know that $m(\gamma,x)$ is finite for $\Haus^1$-a.e.~$x\in X$
(Proposition~\ref{s-lengthformula}).
Let then $E$ be the set of all $x\in X$ such that $m(\gamma,x)$ 
is finite and \emph{odd}, and assume by contradiction that $\Haus^1(E)>0$.

By Lemma~\ref{s-goodtest} there exists a Lipschitz function 
$g:X\to\R$ such that $|g(E)|>0$.  
Then we take $N$ and $p$ as in Subsection~\ref{s-orientednotation}, 
and let $f:X\to\R$ be given by
\[
f(x):=
\begin{cases}
	 \sgn(p(x)) & \text{if $x\in E\setminus g^{-1}(N)$,} \\
	 0 & \text{otherwise.}
\end{cases}
\]
For this choice of $g$ and $f$, the sum between square brackets 
in formula \eqref{e-orientedarea} is a positive \emph{odd} integer 
for every $s\in g(E)\setminus N$ and is $0$ otherwise, 
and therefore \eqref{e-orientedarea} yields
\[
\int_\gamma f \, dg
\ge |g(E)\setminus N|
= |g(E)| 
>0
\, .
\]
This contradicts the assumption that $\gamma$ has degree zero 
(cf.~Remark~\ref{s-degzerorem}(i)).
\end{proof}

The following construction is used in the proof of 
Theorem~\ref{s-canopara}.

\begin{parag}[Joining paths]
\label{s-joinpath}
Let $I:=[0,1]$, let $\gamma:I\to X$ be a \emph{closed} path, 
and let $\gamma':I\to X$ be a path whose endpoint 
$\gamma'(0)$ belongs to the image of $\gamma$.
We join these paths to form a \emph{closed} path 
$\gamma\ltimes\gamma':I\to X$ as follows (see figure~\ref{figure2}): 
we choose $t_0$ such that $\gamma'(0)=\gamma(t_0)$ and set%
\,\footnoteb
{The notation $\gamma\ltimes\gamma'$ is not quite appropriate, 
because this path does not depend
only on $\gamma$ and $\gamma'$, but also on the choice of $t_0$.}
\[
\big( \gamma\ltimes\gamma'\big) (t):= 
  \begin{cases}
    \gamma(3t)        & \text{if $0\le t \le t_0/3$,} \\
    \gamma'(3t-t_0)   & \text{if $t_0/3 < t \le (t_0+1)/3$,} \\
    \gamma'(t_0+2-3t) & \text{if $(t_0+1)/3 < t \le (t_0+2)/3$,} \\
    \gamma(3t-2)      & \text{if $(t_0+2)/3 < t \le 1$.} \\
  \end{cases}
\]
\end{parag}

\begin{figure}[h]
\begin{center}
  \includegraphics[scale=1.1]{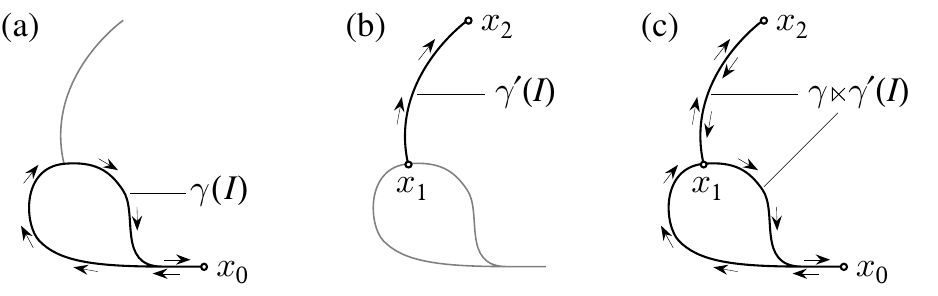}  
  \caption{Example of joined paths: $\gamma$, $\gamma'$ and 
  $\gamma\ltimes\gamma'$ are described in (a), (b), (c), respectively 
  (the arrows show how each path goes through its own image); 
  $x_0$ agrees with both endpoints of $\gamma$ and $\gamma\ltimes\gamma'$,
  $x_1$ and $x_2$ are the endpoints of $\gamma'$, and $x_1=\gamma'(0)=\gamma(t_0)$.} 
  \label{figure2}
\end{center}
\end{figure}

The next lemma collects some straightforward properties 
of $\gamma\ltimes\gamma'$ that will be used later.
We omit the proof.

\begin{lemma}
\label{s-joinpathlemma}
Take $\gamma$, $\gamma'$ and $\gamma\ltimes\gamma'$ as in Subsection~\ref{s-joinpath}.
The following statements hold:
\begin{itemizeb}
\item[(i)]
if $\gamma$ and $\gamma'$ are Lipschitz, then
$\gamma\ltimes\gamma'$ is Lipschitz;
\item[(ii)]
$\Len(\gamma\ltimes\gamma')=\Len(\gamma)+2\,\Len(\gamma')$;
\item[(iii)]
if $\gamma$ and $\gamma'$ have bounded multiplicities,
so does $\gamma\ltimes\gamma'$;
\item[(iv)]
if the path $\gamma$ has multiplicity $2$ at all points in its image
except finitely many, $\gamma'$ has multiplicity $1$ at all 
points in its image except finitely many, and the set 
$\gamma(I) \cap \gamma'(I)$ is finite, then $\gamma\ltimes\gamma'$ 
has multiplicity $2$ at all points in its image 
except finitely many;
\item[(v)]
for every $f:X\to\R$ bounded and Borel, and every 
$g:X\to\R$ Lipschitz there holds
\[
\int_{\gamma\ltimes\gamma'} f \, dg 
= \int_\gamma f\, dg
\, ;
\] 
\item[(vi)]
if $\gamma$ has degree zero 
(cf.~Subsection~\ref{s-degzero})
then $\gamma\ltimes\gamma'$
has degree zero.
\end{itemizeb}
\end{lemma}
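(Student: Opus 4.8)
The plan is to read off all six statements from the four-piece structure of $\gamma\ltimes\gamma'$ given in Subsection~\ref{s-joinpath}: on $[0,t_0/3]$ and on $[(t_0+2)/3,1]$ the path retraces $\gamma$ once, forward, split at the junction time $t_0$ where $\gamma(t_0)=\gamma'(0)$; on $[t_0/3,(t_0+1)/3]$ and $[(t_0+1)/3,(t_0+2)/3]$ it runs along $\gamma'$ first forward and then backward. Each piece is $\gamma$ or $\gamma'$ precomposed with an affine map of slope $\pm3$, and the four pieces agree at the three gluing times (as one checks directly), so the formula does define a path. For (i) I would note that each piece is Lipschitz with constant $3\max\{\Lip(\gamma),\Lip(\gamma')\}$ and that a continuous map which is Lipschitz on each of finitely many intervals covering $I$ is Lipschitz on all of $I$. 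For (iii) I would use that $\gamma^{-1}(x)$ feeds only the two outer pieces and $(\gamma')^{-1}(x)$ only the two inner pieces, giving the crude bound $m(\gamma\ltimes\gamma',x)\le m(\gamma,x)+2\,m(\gamma',x)$.

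Statement (ii) then follows from additivity of length over a subdivision of $I$ together with its invariance under affine reparametrization and under orientation reversal: the two $\gamma$-pieces contribute $\Len(\gamma,[0,t_0])+\Len(\gamma,[t_0,1])=\Len(\gamma)$, while the forward and backward $\gamma'$-pieces each contribute $\Len(\gamma')$.

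The computational crux is (v), from which (vi) is immediate (take $f,g$ Lipschitz and use that $\gamma$ has degree zero). Writing $h:=g\circ\gamma$ and $h':=g\circ\gamma'$ and applying \eqref{e-intform} piece by piece, the substitutions $s=3t$ and $s=3t-2$ reassemble the two $\gamma$-pieces into $\int_0^1 (f\circ\gamma)\,\dot h\,dt=\int_\gamma f\,dg$; the substitution $s=3t-t_0$ turns the forward $\gamma'$-piece into $\int_{\gamma'}f\,dg$, while the orientation-reversing substitution $s=t_0+2-3t$ turns the backward $\gamma'$-piece into $-\int_{\gamma'}f\,dg$ (the sign arising from $ds=-3\,dt$ together with the reversal of the limits of integration). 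The two $\gamma'$-contributions therefore cancel and only $\int_\gamma f\,dg$ survives. This cancellation is precisely the reason for traversing $\gamma'$ back and forth, and is the structural heart of the lemma.

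The most delicate bookkeeping is (iv), and I expect the exceptional-point count to be the main obstacle. The idea is to classify each $x$ in the image according to the trichotomy $\gamma(I)\setminus\gamma'(I)$, $\gamma'(I)\setminus\gamma(I)$, and the finite set $\gamma(I)\cap\gamma'(I)$. For $x\in\gamma(I)\setminus\gamma'(I)$ the preimage lies entirely in the two $\gamma$-pieces, and since $\gamma(t_0)=\gamma'(0)\in\gamma'(I)$ the junction time $t_0$ is never a preimage, so no double counting occurs at the gluing and $m(\gamma\ltimes\gamma',x)=m(\gamma,x)$, which equals $2$ for all but finitely many such $x$ by hypothesis. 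For $x\in\gamma'(I)\setminus\gamma(I)$ the preimage lies in the two $\gamma'$-pieces, and the forward and backward passes each contribute $m(\gamma',x)$, so $m(\gamma\ltimes\gamma',x)=2\,m(\gamma',x)=2$ for all but finitely many such $x$; the only extra care needed is at the two $\gamma'$-endpoints and the gluing times, which add at most finitely many exceptional points. Collecting the finitely many points where $m(\gamma)\ne2$ or $m(\gamma')\ne1$, the gluing images, and the finite set $\gamma(I)\cap\gamma'(I)$, one concludes that $m(\gamma\ltimes\gamma',x)=2$ off a finite set; the whole argument hinges on the finiteness of $\gamma(I)\cap\gamma'(I)$ to keep that exceptional set finite.
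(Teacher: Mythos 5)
Your proof is correct; in fact the paper omits the proof of this lemma altogether, stating only that the properties are straightforward, and your piecewise verification---the change-of-variables cancellation of the two $\gamma'$-pieces in (v), the bound $m(\gamma\ltimes\gamma',x)\le m(\gamma,x)+2\,m(\gamma',x)$ for (iii), and the trichotomy with endpoint/junction bookkeeping for (iv)---is exactly the intended routine argument. Nothing in your write-up would fail; the only implicit point worth noting is that (v) and (vi) presuppose $\gamma,\gamma'$ Lipschitz (so that the integrals in \eqref{e-intform} are defined), which is how the lemma is used in the paper.
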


\begin{proof}[Proof of Theorem~\ref{s-canopara}]
Let $I:=[0,1]$.
We obtain the path $\gamma:I\to X$ with the required properties 
as limit of the closed paths $\gamma_n:I\to X$ constructed 
by the inductive procedure described in the next two steps. 

\medskip
\emph{Step~1: construction of $\gamma_1$.}
We choose $x'_0\in X$ and take $x_0\in X$
which maximizes the distance from $x'_0$. 
By Proposition~\ref{s-pathconn}, there exists an
\emph{injective} Lipschitz path $\gamma'_0:I\to X$ 
that connects $x'_0$ to $x_0$.
We then set
\[
\gamma_1(t):= 
\begin{cases}
\gamma'_0(2t) & \text{if $0\le t \le 1/2$,} \\
\gamma'_0(2-2t) & \text{if $1/2 < t \le 1$.} \\
\end{cases}
\]
Note that that $\gamma_1$ is closed, has degree $0$,
and its multiplicity is $2$ at all 
points of $\gamma_1(I)$ except $\gamma_1(1/2)$, 
where it is $1$.
Clearly $\Len(\gamma_1)=2\,\Len(\gamma'_0)$.

\medskip
\emph{Step~2: construction of $\smash{\gamma_{n+1}}$, given $\gamma_n$.}
We assume that $\gamma_n(I)$ is a proper subset of~$X$.%
\footnoteb{This inductive procedure stops if $\gamma_n$ is surjective;
when this happens, we simply reparametrize $\gamma_n$ 
so that it has constant speed, and set $\gamma:=\gamma_n$.
In this case it is quite easy to verify that $\gamma$ has the required 
properties (we omit the details).}
Then we take a point $x_n\in X$ which maximizes the distance 
from $\gamma_n(I)$, and an \emph{injective} Lipschitz 
path $\gamma'_n:I\to X$ that connects $x_n$ to some point 
$x'_n\in\gamma_n(I)$. 

By ``cutting off a piece of
$\gamma'_n$'' we can assume that this path intersects 
$\gamma_n(I)$ \emph{only} at the endpoint~$x'_n$.
We can also assume that $x'_n = \gamma'_n(0)$. Then we set
\[
\gamma_{n+1} := \gamma_n \ltimes\gamma'_n
\, .
\] 

\medskip
\emph{Step~3: properties of $\gamma_n$.}
Using Lemma~\ref{s-joinpathlemma}, one easily proves 
that each $\gamma_n$ is closed and Lipschitz, has degree zero,
and satisfies
\begin{equation}
\label{e-canoparapr1}
\ell_n:=\Len(\gamma_n) = 2 \sum_{m=0}^{n-1} \Len(\gamma'_m)
\, .
\end{equation}
Moreover the multiplicity of $\gamma_n$ is bounded and equal to $2$ 
for all points in $\gamma_n(I)$ except finitely many.
This last property, together with formula \eqref{e-lengthformula},
yields
\begin{equation}
\label{e-canoparapr2}
\ell_n:=\Len(\gamma_n) \le 2\,\Haus^1(X)
\, .
\end{equation}

\medskip
\emph{Step~4: reparametrization of $\gamma_n$.}
Since the multiplicity of $\gamma_n$ is bounded, 
$\gamma_n$ is not constant on any subinterval of $I$, 
and therefore it admits a reparametrization 
with constant speed equal to $\ell_n$ (Remark~\ref{s-pathrem}(ii)).
In the rest of the proof we replace $\gamma_n$ 
by this reparametrization, which still satisfies 
all the properties stated in Step~3.

\medskip
\emph{Step~5: construction of $\gamma$.}
The paths $\gamma_n:I\to X$ are closed and uniformly 
Lipschitz, and more precisely $\Lip(\gamma_n)=\ell_n\le 2\,\Haus^1(X)$. 
Therefore, possibly passing to a subsequence, 
the paths $\gamma_n$ converge uniformly to a path $\gamma:I\to X$ 
which is closed and Lipschitz, and satisfies
$\Lip(\gamma) \le 2\,\Haus^1(X)$.

\medskip
\emph{Step~6: $\gamma$ is surjective.}
Equations \eqref{e-canoparapr1} and \eqref{e-canoparapr2} imply that
the sum of the lengths of all paths $\gamma'_n$ is finite, and then
\[
\Len(\gamma'_n) \to 0 
\quad\text{as $n\to +\infty$.}
\]
Now, recalling the choice of $x_n$ and the fact that 
$\gamma'_n$ connects $x_n$ to $\gamma_n(I)$ (cf.~Step~2) 
we obtain that
\[
d_n:=\sup_{x\in X} \dist(x,\gamma_n(I))
=\dist(x_n,\gamma_n(I)) 
\le \Len(\gamma'_n)
\, ,
\]
and therefore $d_n$ tends to $0$ as $n\to +\infty$,
which means that the union of all $\gamma_n(I)$ is dense in $X$.

Now, $\gamma_m(I)$ contains $\gamma_n(I)$ for every $m\ge n$, 
and then $\gamma(I)$ contains $\gamma_n(I)$ for every $n$.
Hence $\gamma(I)$ contains a dense subset of $X$, 
and since it is closed, it must agree with $X$.

\medskip
\emph{Step~7: completion of the proof.}
Since the paths $\gamma_n$ have degree zero, so does $\gamma$ 
(Proposition~\ref{s-degzeroprop}(ii)), and the proof of 
statement~(i) is complete. 
This fact, the surjectivity of $\gamma$, and
Proposition~\ref{s-degzeroprop}(iii) imply that 
\begin{equation}
\label{e-canoparapr3}
m(\gamma,x)\ge 2 
\quad\text{for $\Haus^1$-a.e.~$x\in X$.}
\end{equation}
On the other hand, estimate \eqref{e-canoparapr2} and the 
semicontinuity of the length (Remark~\ref{s-pathrem}(i)) imply
\begin{equation}
\label{e-canoparapr4}
\Len(\gamma)\le 2\,\Haus^1(X)
\, .
\end{equation}
Now, equations \eqref{e-canoparapr3} and \eqref{e-canoparapr4},
together with \eqref{e-lengthformula}, imply that equality must 
hold both in \eqref{e-canoparapr3} and in \eqref{e-canoparapr4}, 
and statement~(ii) is proved. 

To prove statement~(iii), note that 
$\Lip(\gamma) \le 2\,\Haus^1(X) = \Len(\gamma)$ (cf.~Step~5), 
and then $\gamma$ must have constant speed (cf.~Remark~\ref{s-pathrem}(v)).
\end{proof}

We conclude this section by another proof of \Golab's theorem.

\begin{proof}
[Second proof of Theorem~\ref{s-golab} for $\boldsymbol{m=1}$]
We must show that for every sequence of continua $K_n$ 
contained in $X$ which converge in the Hausdorff distance
to some continuum $K$, there holds 
$\liminf \Haus^1(K_n) \ge \Haus^1(K)$.

We can clearly assume that the lengths $\Haus^1(K_n)$
are uniformly bounded.
For every $n$, we apply Theorem~\ref{s-canopara}
to the continuum $K_n$ and find a path $\gamma_n:I\to X$
with $I:=[0,1]$ such that 
$\gamma_n(I)=K_n$, 
$\gamma_n$ has constant speed and degree zero, 
and $\Len(\gamma_n)=\Lip(\gamma_n)=2\,\Haus^1(K_n)$.

Note that the paths $\gamma_n$ are uniformly Lipschitz, 
and therefore, possibly passing to a subsequence, 
they converge uniformly to some path $\gamma: I\to X$, 
and clearly $\gamma(I)=K$. 

Moreover Proposition~\ref{s-degzeroprop}(ii) implies 
that $\gamma$ has degree zero, and Proposition~\ref{s-degzeroprop}(iii) 
implies that $m(\gamma,x)\ge 2$ for $\Haus^1$-a.e.~$x\in K$. 
Then formula~\eqref{e-lengthformula} implies
that $\Len(\gamma) \ge 2\,\Haus^1(K)$.

We can now conclude, using the semicontinuity of length
(cf.~Remark~\ref{s-pathrem}(i)):
\[
\liminf_{n\to+\infty} \Haus^1(K_n)
=\frac{1}{2} \liminf_{n\to+\infty} \Len(\gamma_n)
\ge \frac{1}{2} \Len(\gamma)
\ge \Haus^1(K)
\, .
\qedhere
\]
\end{proof}

\section*{Acknowledgements}
We thank Alexey Tuzhilin for pointing 
out a mistake in the proof of Lemma~\ref{s-lemmachiave1} 
and for several valuable comments.

The research of the first author has been partially supported by 
the University of Pisa through the 2015 PRA Grant
``Variational methods for geometric problems'',   
and by the European Research Council (ERC) through the 
Advanced Grant ``Local structure of sets, measures and currents''.
	%
	%
	%
	%
\bibliographystyle{plain}

\begin{thebibliography}{10}
\setlength{\itemsep}{3pt}
\newcommand{\aut}[1]{\textsc{#1}}

\bibitem{AT}
\aut{Ambrosio, Luigi; Tilli, Paolo.}
\textit{Selected topics on analysis in metric spaces}.
Oxford Lecture Series in Mathematics and its Applications, vol.~25.
Oxford University Press, Oxford, 2004.



\bibitem{DMMS}
\aut{Dal Maso, Gianni; Morel, Jean-Michel; Solimini, Sergio.} 
A variational method in image segmentation: existence and approximation results. 
\textit{Acta Math.}, 168 (1992), 89--151.

\bibitem{DMT}
\aut{Dal Maso, Gianni; Toader, Rodica.}
A model for the quasi-static growth of brittle fractures: 
existence and approximation results. 
\textit{Arch. Ration. Mech. Anal.}, 162 (2002), no.~2, 101--135. 

\bibitem{EH}
\aut{Eilenberg, Samuel; Harrold, Orville G., Jr.}
Continua of finite linear measure. I.
\textit{Amer. J. Math.}, 65 (1943), no.~1, 137--146. 

\bibitem{Eng}
\aut{Engelking, Ryszard.}
\textit{General topology. Revised and completed edition.} 
Sigma Series in Pure Mathematics, vol.~6.
Heldermann Verlag, Berlin 1989.

\bibitem{Fa}
\aut{Falconer, Kenneth J.}
\textit{The geometry of fractal sets.}
Cambridge Tracts in Mathematics, vol.~85.
Cambridge University Press, Cambridge, 1986.

\bibitem{Fre}
\aut{Fremlin, David H.}
Spaces of finite length.
\textit{Proc. London Math. Soc. (3)},
64 (1992), no.~3, 449--486. 

\bibitem{Go}
\aut{\Golab, Stanis{\l}aw.}
Sur quelques points de la th\'eorie de la longueur
(On some points of the theory of the length). 
\textit{Ann. Soc. Polon. Math.}, 
7 (1929), 227--241.

\bibitem{PS}
\aut{Paolini, Emanuele; Stepanov, Eugene.}
Existence and regularity results for the Steiner problem.
\textit{Calc. Var. Partial Differential Equations}, 
46 (2013), 837--860.

\bibitem{Wa}
\aut{Wa\.{z}ewski, Tadeusz.}
Kontinua prostowalne w zwi\k{a}zku z funkcjami i odwzorowaniami
absolutnie ci\k{a}g{\l}emi
(Rectifiable continua in connection with absolutely continuous 
functions and mappings).
\textit{Dodatek do Rocznika Polskiego Towarzystwa Matematycznego
(Supplement to the Annals of the Polish Mathematical Society)},
(1927), 9--49.

\end{thebibliography}

	%
	%
	%
	%
\vskip .5 cm
{\parindent = 0 pt\footnotesize
G.A.
\\
Dipartimento di Matematica, 
Universit\`a di Pisa
\\
largo Pontecorvo 5, 
56127 Pisa, 
Italy 
\\
e-mail: \texttt{giovanni.alberti@unipi.it}

\bigskip
M.O.
\\
Scuola Normale Superiore
\\
piazza dei Cavalieri 7,
56126 Pisa, 
Italy 
\\
e-mail: \texttt{martino.ottolini@sns.it}
\par

\par
}

\end{document}